\newtheorem{theorem}{Theorem}[section]
\newtheorem{corollary}{Corollary}[section]
\newtheorem{definition}{Definition}[section]
\newtheorem{example}{Example}[section]
\newtheorem{lemma}{Lemma}[section]
\newtheorem{proposition}{Proposition}[section]
\newtheorem{remark}{Remark}[section]
\def\Ric{\operatorname{Ric}}
\def\tr{{\rm\,trace\,}}
\def\eq{\hspace*{-2.1mm}&=&\hspace*{-2.1mm}}
\newcommand\Div{\operatorname{div}}
\def\vol{\operatorname{vol}}
\author{Vladimir Rovenski\footnote{Department of Mathematics, University of Haifa,
 Israel
       \newline e-mail: {\tt vrovenski@univ.haifa.ac.il}
       } }
\title{Einstein-type metrics and generalized Ricci solitons \\ on weak $f$-K-contact manifolds}
\begin{document}

\date{}

\maketitle

\begin{abstract}
A weak metric $f$-structure $(f,Q,\xi_i,\eta^i,g)\ (i=1,\ldots,s)$, generalizes the metric $f$-structure on a smooth manifold,
i.e., the complex structure on the contact distribution is replaced with a nonsingular skew-symmetric tensor.
We study geometry of a weak $f$-{K}-contact structure, which is a weak $f$-contact structure, whose characteristic vector fields are Killing.
We show that $\ker f$ of a weak $f$-contact manifold defines a $\mathfrak{g}$-foliation with an abelian Lie algebra.
Then we characterize weak $f$-{K}-contact manifolds among all weak metric $f$-manifolds by the pro\-perty known for $f$-{K}-contact manifolds, and find
when a Riemannian manifold endowed with a set of orthonormal Killing vector fields is a weak $f$-{K}-contact manifold.
We show that for $s>1$, an Einstein weak $f$-{K}-contact manifold is Ricci flat, then find sufficient conditions for a weak $f$-{K}-contact manifold with parallel Ricci tensor or with a generalized gradient Ricci soliton structure to be Ricci flat or a quasi Einstein~manifold.
We prove positive definiteness of the Jacobi operators in the characteristic directions and use this to deform a weak $f$-K-contact structure
to an $f$-K-contact structure.
We define an $\eta$-Ricci soliton and $\eta$-Einstein structures on a weak metric $f$-manifold
(which for $s=1$, give the well-known structures on contact metric manifolds)
and find sufficient conditions for a compact weak $f$-{\rm K}-contact manifold with an $\eta$-Ricci soliton structure of constant scalar curvature to be~$\eta$-Einstein.

\vskip1.5mm\noindent
\textbf{Keywords}:
Weak $f$-{K}-contact manifold, unit Killing vector field, totally geodesic foliation, Einstein metric,
$\eta$-Einstein manifold, genera\-lized Ricci soliton, $\eta$-Ricci soliton, $\xi$-curvature

\vskip1.5mm
\noindent
\textbf{Mathematics Subject Classifications (2010)} 53C15, 53C25, 53D15
\end{abstract}


\section{Introduction}
\label{sec:00}

Contact geometry is of growing interest due to its important role in mechanics in explaining physical phenomena.
In addition, many recent articles have been motivated by the question: how interesting Ricci solitons
-- self-similar solutions of the Ricci flow equation, $\partial g(t)/\partial t = -2\,\Ric_{g(t)}$ --
can be for geometry of contact metric manifolds.
Some of them consider conditions when a~contact metric manifold $M(g,f,\xi,\eta)$,
equipped with a Ricci-type soliton structure carries a canonical
(e.g., constant curvature or Einstein-type) metric, e.g.,~\cite{G-D-2020,G-2023,CZB-2022,N-R-2016}.
%
The~Ricci soliton equation
\[
 (1/2)\,\pounds_V\,g + \Ric = \lambda\,g,
\]
where $V$ is a vector field and $\lambda\in\mathbb{R}$,
generalizes Einstein metrics, $\Ric = \lambda\,g$. In the case that $V$ is the gradient of a potential function $\sigma$ one has a gradient Ricci~soliton equation ${\rm Hess}_{\,\sigma} + \Ric = \lambda\,g$. Almost Ricci solitons (when $\lambda\in C^\infty(M)$, see \cite{PRRS-2011}) are a generalization of quasi-Einstein manifolds (see \cite{Cao-2009}) and Ricci solitons.
The following {generalized Ricci soliton} equation was studied in \cite{N-R-2016}:
\begin{equation}\label{E-g-r-e}
 (1/2)\,\pounds_{V}\,g -c_2\Ric = \lambda\,g -c_1 V^\flat\otimes V^\flat ,
\end{equation}
where~$c_1,c_2,\lambda\in\mathbb{R}$ and $V^\flat$ is the 1-form dual to the vector field $V$.
If $V=\nabla\sigma$ in \eqref{E-g-r-e}, then
using the definition ${\rm Hess}_{\,\sigma}=\frac12\,\pounds_{\nabla \sigma}\,g$,
we get the \textit{generalized gradient Ricci soliton} equation
\begin{equation}\label{E-gg-r-e}
 {\rm Hess}_{\,\sigma} - c_2\Ric = \lambda\,g -c_1 d\sigma\otimes d\sigma\,,
\end{equation}
and the tensor product notation $(d\sigma\otimes d\sigma)(Y,Z)=d\sigma(Y)\,d\sigma(Z)$ is used.
Each equation above gives a generalization of Einstein metric manifolds.
%
For different values of $c_1,c_2$ and $\lambda$, equation \eqref{E-g-r-e} generalizes
Killing equation ($c_1 = c_2 = \lambda = 0$), equation for homotheties ($c_1 = c_2 = 0$), Ricci soliton equation
($c_1 = 0,\,c_2 = -1$), vacuum near-horizon geometry equation ($c_1 = 1,\ c_2 = 1/2$), e.g., \cite{G-D-2020}.
Catino et al. \cite{CMMR-2017} also introduced the notion of an Einstein-type structure on a manifold,
\begin{equation*}
 \beta\,{\cal L}_V\,g + \alpha\Ric = (\rho\,{\rm Scal} +\lambda)\,g +\mu\,V^\flat\otimes V^\flat
\end{equation*}
(with $\alpha,\beta,\mu,\rho\in\mathbb{R}$)
unifying various cases studied recently, such as gradient Ricci solitons, and quasi-Einstein manifolds.
There are no Einstein metrics on some compact manifolds, which motivated the study of genera\-lizations of such metrics.
A~contact metric manifold is called $\eta$-Einstein,~if
\[
 \Ric = \lambda\,g +\nu\,\eta\otimes\eta,\quad\lambda,\nu\in\mathbb{R} .
\]
Under certain conditions, a Sasaki-Ricci flow (an odd-dimensional counterpart of the K\"{a}hler-Ricci flow) on a compact manifold converges to an $\eta$-Einstein~metric on a contact metric manifold, see~\cite{SW-2010}. The notion of $\eta$-Ricci soliton on a contact metric manifold, see \cite{CJ-2009},
generalizes Ricci solitons and $\eta$-Einstein~metrics:
 $(1/2)\,\pounds_V\,g + \Ric = \lambda\,g +\nu\,\eta\otimes\eta,\quad\nu\in\mathbb{R}$.

A metric $f$-structure on a $(2n+s)$-dimensional smooth manifold is a higher dimensional analog of a contact structure,
defined by a (1,1)-tensor $f$ of constant rank $2n$, which satisfies $f^3+f=0$, and orthonormal vector fields $\{\xi_i\}_{1\le i\le s}$
spanning $\ker f$ -- the $2n$-dimensional characteristic distribution, see \cite{b1970,BP-2016,fip,tpw,yan,YK-1985}.
Foliations of simple extrinsic geometry, i.e., vani\-shing second fundamental form of the leaves,
appear on manifolds with degenerate differential forms and curvature-like tensors, e.g.~\cite{Rov-Wa-2021}.
The following problem was posed in \cite{fip}: {find suitable structures on manifolds, which lead to totally geodesic foliations}.
The~distribution $\ker f$ of an $f$-contact structure defines a totally geodesic foliation.
An~interesting case occurs when $\ker f$ is defined by a homomorphism of a $s$-dimensional Lie algebra $\mathfrak{g}$ to the Lie algebra of all vector fields on $M$, i.e., $M$ admits a $\mathfrak{g}$-foliation, see~\cite{AM-1995}.
In the presence of a compatible metric, such $\mathfrak{g}$-foliations are spanned by Killing vector fields, e.g., \cite{ca-toh}.
For a 1-dimensional Lie algebra, a ${\mathfrak{g}}$-foliation is gene\-rated by a nonvanishing vector~field,
and we get contact metric manifolds as well as $K$-contact and Sasakian ones, see~\cite{blair2010}.

An~$f$-{K}-contact structure, i.e. an $f$-contact structure, whose characteristic vector fields are unit Killing vector fields,
see \cite{Goertsches-2}, can be regarded as intermediate between a metric $f$-structure and $S$-structure
(the Sasaki structure when $s=1$).
The influence of constant length Killing vector fields on the geometry of Riemannian manifolds has been studied by several authors
from different points of view, e.g.,~\cite{N-2021,D-B-2021}.
The curvature of $f$-contact and $f$-{K}-contact manifolds was studied in \cite{CFF-1990,Di-T-2006,Di-T-2009}.
Observe that there are no Einstein metrics on $f$-{K}-contact manifolds.

\smallskip

In \cite{rst-43,Rov-splitting,RWo-2} the study of ``weak" contact structures on a smooth $(2n+s)$-dimensi\-onal manifold
(i.e., the complex structure on the characteristic distribution is replaced with a nonsingular skew-symmetric tensor)
was initiated. This generalizes a metric $f$-structure and its satellites, and allows a new look at the classical theory and new applications.
In~\cite{rst-43}, we proved that the ${\cal S}$-structure is rigid, i.e., our weak ${\cal S}$-structure is the ${\cal S}$-structure,
and a weak metric $f$-structure with parallel tensor $f$ is a weak~${\cal C}$-structure.
In~\cite{Rov-splitting}, we obtained a topological obstruction (including the Adams number) to the existence of weak $f$-K-contact manifolds.
In \cite{RWo-2}, we retracted weak structures with positive partial Ricci curvature onto the subspace of classical structures of the same type.

The article continues our study \cite{rst-43,Rov-splitting,RWo-2} of the geometry of weak $f$-contact manifolds.
Following the approach in \cite{r-23} for $s=1$, we study the sectional and Ricci curvature in the ${\xi}$-directions of
a weak $f$-contact structure and its particular case -- a weak $f$-{K}-contact structure.
The~charac\-teristic distribution of a {weak} $f$-contact manifold defines a totally geodesic foliation with flat leaves.
Our goal is to show that the weak structures can be a useful for studying unit Killing vector fields,
totally geodesic foliations and $\xi$-sectional curvature on Riemannian manifolds,
and that some results on $f$-{K}-contact manifolds can be extended to the case of weak $f$-{K}-contact manifolds.
For example, we answer the questions of when a weak $f$-{K}-contact manifold
(i)~carries a generalized Ricci soliton structure or just a gradient quasi Einstein (e.g., an Einstein)~metric, and
(ii)~carries an $\eta$-Ricci soliton structure or just an $\eta$-Einstein metric.
The proofs use the properties of new tensors, as well as the constructions required in the classical~case.


The article is organized as follows.
{Section~\ref{sec:01}}, following the introductory {Section~\ref{sec:00}}, presents basics of weak metric $f$-manifolds.
{Section~\ref{sec:03a}} presents basics of weak $f$-contact and weak $f$-K-contact manifolds
and shows that these manifolds are endowed with totally geodesic foliations.
We show that $\ker f$ of a weak $f$-contact manifold defines a $\mathfrak{g}$-foliation with an abelian Lie algebra.
In~{Section~\ref{sec:02}}, we characterize (Theorem~\ref{T-3.1}) weak $f$-{K}-contact manifolds among all weak $f$-contact manifolds by the property ${f}=-\nabla{\xi_i}\ (i\le s)$ known for $f$-{K}-contact manifolds, and find sufficient conditions (Theorem~\ref{prop2.1b}) under which when a Riemannian mani\-fold endowed with a set of unit Killing vector fields is a weak $f$-{K}-contact manifold.
In~{Section~\ref{sec:03}}, for a weak $f$-{K}-contact manifold, we calculate (in Proposition~\ref{thm6.2D}) the Ricci curvature in the ${\xi}$-directions, prove that the $\xi$-sectional curvature is positive (Corollary~\ref{C-5.1}).
This allows us to deform a weak $f$-K-contact structure to the $f$-K-contact structure (Theorem~\ref{T-PRF-g2b}).
We~show that for $s>1$, an Einstein weak $f$-{K}-contact manifold is Ricci flat
and find (Theorem~\ref{T-4.1}) sufficient conditions for a weak $f$-{K}-contact manifold with parallel Ricci tensor to be a Ricci flat manifold,
or, for $s=1$, an Einstein manifold.
In~{Section~\ref{sec:04}}, we find (Theorems~\ref{T-5.1} and \ref{T-5.2}) sufficient conditions for a weak $f$-{K}-contact manifold with a generalized gradient Ricci soliton to be a quasi Einstein or Ricci flat manifold, or, for $s=1$, an Einstein manifold.
In~{Section~\ref{sec:05}}, we define an $\eta$-Ricci soliton structure and find sufficient conditions for
a compact weak $f$-{\rm K}-contact manifold with such a structure of constant scalar curvature to be~$\eta$-Einstein (Theorem~\ref{T-7.1}).

\section{Preliminaries}
\label{sec:01}

Here, we survey the basics of a weak metric $f$-structure (see \cite{rst-43,RWo-2})
as a higher dimensional analog of a weak almost contact structure (see \cite{RovP-arxiv}).
We define weak $f$-contact and weak $f$-{K}-contact structures and show that these structures are endowed with totally geodesic~foliations.

A~\textit{framed weak $f$-structure} on a smooth manifold $M^{2n+s}$ is a set $({f},Q,{\xi_i},{\eta^i})$,
where ${f}$ is a $(1,1)$-tensor, $Q$ is a nonsingular $(1,1)$-tensor, ${\xi_i}\ (1\le i\le s)$ are characteristic vector fields and ${\eta^i}$ are dual 1-forms, satisfying
\begin{equation}\label{2.1}
 {f}^2 = -Q + \sum\nolimits_{\,i}{\eta^i}\otimes {\xi_i},\quad
 {\eta^i}({\xi_j})=\delta^i_j,\quad
 Q\,{\xi_i} = {\xi_i}.
\end{equation}
The forms ${\eta^i}$ determine a $2n$-dimensional contact distribution ${\cal D}:=\bigcap_{\,i}\ker{\eta^i}$.
Assume that ${\cal D}$ is ${f}$-invariant
(that is true for framed $f$-structure \cite{b1970,YK-1985}, where $Q={\rm id}_{\,TM}$),
\begin{equation}\label{2.1-D}
 {f} X\in{\cal D}\quad (X\in {\cal D}) .
\end{equation}
By \eqref{2.1} and \eqref{2.1-D}, the distribution ${\cal D}$ is invariant for $Q$: $Q({\cal D})={\cal D}$.
Note that $f^3+fQ=0$.
For a framed weak $f$-structure on a manifold $M$, the tensor ${f}$ has rank $2n$, thus ${\cal D}=f(TM)$, and
\[
 {f}\,{\xi_i}=0,\quad {\eta^i}\circ{f}=0,\quad \eta^i\circ Q=\eta^i,\quad [Q,\,{f}]=0 .
\]
A framed weak $f$-structure $({f},Q,{\xi_i},{\eta^i})$ is called {\it normal} if the following tensor is zero:
\begin{align*}
 N^{\,(1)}(X,Y) = [{f},{f}](X,Y) + 2\sum\nolimits_{\,i} d{\eta^i}(X,Y)\,{\xi_i},\quad X,Y\in\mathfrak{X}_M ,
\end{align*}
where
the Nijenhuis torsion $[{f},{f}]$ of ${f}$ is given~by
\begin{align}\label{2.5}
 [{f},{f}](X,Y) = {f}^2 [X,Y] + [{f} X, {f} Y] - {f}[{f} X,Y] - {f}[X,{f} Y],\quad X,Y\in\mathfrak{X}_M,
\end{align}
and the exterior derivative of a differential form ${\eta^i}$ is given by
\begin{equation}\label{3.3A}
 d\eta^i(X,Y) = \frac12\,\{X({\eta^i}(Y)) - Y({\eta^i}(X)) - {\eta^i}([X,Y])\},\quad X,Y\in\mathfrak{X}_M.
\end{equation}
Recall the following formulas with the Lie derivative $\pounds_{Z}$ in the $Z$-direction:
\begin{align}\label{3.3B}
\nonumber
 (\pounds_{Z}{f})X &  = [Z, {f} X] - {f} [Z, X]\quad (X,Y\in\mathfrak{X}_M),\\
 (\pounds_{{\xi_i}}\,g)(X,Y) & = {\xi_i}(g(X,Y)) -g([{\xi_i},X],Y) - g(X,[{\xi_i},Y]).
\end{align}
The following tensors $N^{\,(2)}_i, N^{\,(3)}_i$ and $N^{\,(4)}_{ij}$ are well known in the classical theory, see \cite{b1970,YK-1985}:
\begin{align*}
 N^{\,(2)}_i(X,Y) &= (\pounds_{{f} X}\,{\eta^i})(Y) - (\pounds_{{f} Y}\,{\eta^i})(X)
 \overset{\eqref{3.3A}}=2\,d{\eta^i}({f} X,Y) - 2\,d{\eta^i}({f} Y,X) ,  \\
 N^{\,(3)}_i(X) &= (\pounds_{{\xi_i}}\,{f})X
 = [{\xi_i}, {f} X] - {f} [{\xi_i}, X],\\
 N^{\,(4)}_{ij}(X) &= (\pounds_{{\xi_i}}\,{\eta^j})(X)
 = {\xi_i}({\eta^j}(X)) - {\eta^j}([{\xi_i}, X])
 \overset{\eqref{3.3A}}= 2\,d{\eta^j}({\xi_i}, X),
\end{align*}
where $\pounds_{Z}$ is the Lie derivative in the ${Z}$-direction.
For $s=1$, the above tensors reduce to the following tensors on (weak) almost contact manifolds:
\begin{align*}
 N^{\,(2)}(X,Y) = (\pounds_{f\,X}\,\eta)Y - (\pounds_{f\,Y}\,\eta)X, \quad
 N^{\,(3)} = \pounds_{\xi}\,f,\quad
 N^{\,(4)} = \pounds_{\xi}\,\eta .
\end{align*}

\begin{remark}[see \cite{r-23}]\rm
Let $M^{2n+s}(f,Q,\xi_i,\eta^i)$ be a framed weak $f$-manifold.
Consider the product manifold $\bar M = M^{2n+s}\times\mathbb{R}^s$,
where $\mathbb{R}^s$ is a Euclidean space with a basis $\partial_1,\ldots,\partial_s$,
and define tensors $\bar f$ and $\bar Q$ on $\bar M$ putting
\begin{align*}
 \bar f(X,\, \sum\nolimits_{\,i} a^i\partial_i) = (fX-\sum\nolimits_{\,i} a^i\xi_i,\, \sum\nolimits_{\,j} \eta^j(X)\partial_j),\quad
 \bar Q(X,\, \sum\nolimits_{\,i} a^i\partial_i) = (QX,\, \sum\nolimits_{\,i} a^i\partial_i) ,
\end{align*}
where $a_i\in C^\infty(M)$.
Hence, $\bar f(X,0)=(fX,0)$, $\bar Q(X,0)=(QX,0)$ for $X\in{\cal D}$,
$\bar f(\xi_i,0)=(0,\partial_i)$, $\bar Q(\xi_i,0)=(\xi_i,0)$ and
$\bar f(0,\partial_i)=(-\xi_i,0)$, $\bar Q(0,\partial_i)=(0,\partial_i)$.
Then it is easy to verify that $\bar f^{\,2}=-\bar Q$.
 The~tensors $N^{\,(1)},N^{\,(2)}_i, N^{\,(3)}_i, N^{\,(4)}_{ij}$ appear when we derive the integrability condition $[\bar f, \bar f]=0$
of $\bar f$ and express the normality condition $N^{\,(1)}=0$ of a framed weak $f$-structure on~$M$.
\end{remark}

 If there is a Riemannian metric $g$ on $M$ such that
\begin{align}\label{2.2}
 g({f} X,{f} Y)= g(X,Q\,Y) -\sum\nolimits_{\,i}{\eta^i}(X)\,{\eta^i}(Y),\quad X,Y\in\mathfrak{X}_M,
\end{align}
then $({f},Q,{\xi_i},{\eta^i},g)$ is called a {\it weak metric $f$-structure} on $M$, and $g$
is called a \textit{compatible} metric.
Note that $X = X^\top + X^\bot$, where
$X^\top = \sum_{\,i}\eta^i(X)\,\xi_i$ is the projection of the vector $X\in TM$ on~$\ker f$.
A~framed weak $f$-manifold $M({f},Q,{\xi_i},{\eta^i})$ endowed with a compatible Riemannian metric $g$
is called a \textit{metric weak $f$-manifold} and is denoted by $M({f},Q,{\xi_i},{\eta^i},g)$.
Putting $Y={\xi_i}$ in \eqref{2.2} and using $Q\,{\xi_i}={\xi_i}$, we get, as in the classical theory, ${\eta^i}(X)=g(X,{\xi_i})$;
thus $g(\xi_i,\xi_j)=\delta_{ij}$.
In~particular, ${\xi_i}$ is $g$-orthogonal to ${\cal D}$ for any compatible metric $g$.
Thus, the orthogonal complement ${\cal D}^\bot$ of ${\cal D}$ is the characteristic (or, Reeb) distribution $\ker f$.
Some sufficient conditions for the existence of a compatible metric on a weak almost contact manifold are given in~\cite{RWo-2}.

Using the Levi-Civita connection $\nabla$ of $g$, one can rewrite \eqref{2.5} as
\begin{align}\label{4.NN}
 [{f},{f}](X,Y) = ({f}\nabla_Y{f} - \nabla_{{f} Y}{f}) X - ({f}\nabla_X{f} - \nabla_{{f} X}{f}) Y .
\end{align}
For a weak metric $f$-structure, the distribution ${\cal D}$ is
non-integrable (has no integral $2n$-dimensio\-nal submanifolds)
since for any nonzero $X\in{\cal D}$ we get
\[
 g([X, {f} X], {\xi_i})= 2\,d{\eta^i}({f} X,X) = g({f} X,{f} X)>0.
\]
For a weak metric $f$-structure, the tensor ${f}$ is skew-symmetric and $Q$ is positive definite and self-adjoint, see \cite{rst-43,RWo-2}:
\begin{equation*}
 g({f} X, Y) = -g(X, {f} Y),\quad
 g(QX,Y)=g(X,QY),\quad X,Y\in\mathfrak{X}_M.
\end{equation*}
The {fundamental $2$-form} $\Phi$ on $M({f},Q,\xi_i,\eta^i,g)$ is defined by
 $\Phi(X,Y)=g(X,{f} Y),\ X,Y\in\mathfrak{X}_M$.
Recall the co-boundary formula for exterior derivative $d$ on a $2$-form $\Phi$,
\begin{align}\label{E-3.3}
 d\Phi(X,Y,Z) &= \frac{1}{3}\,\big\{ X\,\Phi(Y,Z) + Y\,\Phi(Z,X) + Z\,\Phi(X,Y) \notag\\
 &-\Phi([X,Y],Z) - \Phi([Z,X],Y) - \Phi([Y,Z],X)\big\}.
\end{align}

\begin{remark}\rm
A differential $k$-\textit{form} on a manifold $M$ is a skew-symmetric tensor field
$\omega$ of  type $(0, k)$. According the conventions of e.g., \cite{KN-69}, the formula
\begin{eqnarray}\label{eq:extdiff}
\nonumber
 & d\omega ({X}_1, \ldots , {X}_{k+1}) = \frac1{k+1}\sum\nolimits_{\,i=1}^{k+1} (-1)^{i+1} {X}_i(\omega({X}_1, \ldots , \hat{{X}}_i\ldots, {X}_{k+1}))\\
 & +\sum\nolimits_{\,i<j}(-1)^{i+j}\omega ([{X}_i, {X}_j], {X}_1, \ldots,\hat{{X}}_i,\ldots,\hat{{X}}_j, \ldots, {X}_{k+1}),
\end{eqnarray}
where ${X}_i\in\mathfrak{X}_M$ and $\,\hat{\ }\,$ denotes the
operator of omission, defines a $(k+1)$-form $d\omega$ called the \textit{exterior differential} of $\omega$.
Note that \eqref{3.3A} and \eqref{E-3.3} correspond to \eqref{eq:extdiff} with $k=1$ and $k=2$.
\end{remark}


\section{Weak $f$-contact and weak $f$-{K}-contact structures}
\label{sec:03a}

Here, we define ``weak" structures that generalize $f$-contact and $f$-{K}-contact structures.

\begin{definition}\rm
A \textit{weak $f$-contact structure} is a weak metric $f$-structure satisfying
\begin{align}\label{2.3}
 \Phi=d{\eta^1}=\ldots =d{\eta^s} ,
\end{align}
thus, $d\Phi=0$.
A weak $f$-contact structure is called a \textit{weak $f$-{\rm K}-contact structure} if
all characteristic vector fields ${\xi_i}$ are Killing,~i.e.
\begin{eqnarray}\label{E-nabla}
\nonumber
 & (\pounds_{{\xi_i}}\,g)(X,Y):= {\xi_i}(g(X,Y)) -g([{\xi_i},X],Y) - g(X,[{\xi_i},Y]) \\
 & =g(\nabla_X {\xi_i}, Y) +g(\nabla_Y {\xi_i}, X)=0 .
\end{eqnarray}
\end{definition}

For a weak $f$-contact structure and all $i$ and $j$, by \eqref{2.3}, we get
\[
 d\eta^j(\xi_i,X) = \Phi(\xi_i,X)=0.
\]
For a weak $f$-contact structure, the distribution ${\cal D}$ is
non-integrable (has no integral $2n$-dimensio\-nal submanifolds)
since for any nonzero $X\in{\cal D}$ we get
 $g([X, {f} X], {\xi_i})= 2\,d{\eta^i}({f} X,X) = g({f} X,{f} X)>0$.
The relationships between the different classes of weak manifolds (considered in this article) can be summarizes in the diagram
well known for classical structures:
\[
 \left|\!\!\!
   \begin{array}{c}
   \textrm{framed\ weak} \\
   f\textrm{-manifold} \\
   \end{array}
 \!\!\!\right|
\overset{\textrm{metric}}\longrightarrow
\left|\!\!\!
   \begin{array}{c}
  \textrm{metric\ weak} \\
    f\textrm{-manifold}\\
   \end{array}
 \!\!\!\right|
\overset{\Phi=d\eta^i}\longrightarrow
  \left|\!\!\!
   \begin{array}{c}
  \textrm{weak}  \\
    f\textrm{-contact} \\
   \end{array}
 \!\!\!\right|
\overset{\xi_i\,\textrm{-Killing}}\longrightarrow
  \left|\!\!\!
   \begin{array}{c}
  \textrm{weak}  \\
   f\textrm{-K-contact}\\
   \end{array}
 \!\!\!\right|
 \overset{N^{\,(1)}=0}
\longrightarrow
 \left|\!\!\!
   \begin{array}{c}
 \textrm{weak} \\
  {\cal S}\textrm{-manifold} \\
   \end{array}
 \!\!\!\right|.
\]
For $s=1$, we get the following diagram:
\[
 \left|\!\!\!
   \begin{array}{c}
  \textrm{weak\ almost} \\
  \textrm{contact} \\
   \end{array}
 \!\!\!\right|
\overset{\textrm{metric}}\longrightarrow
\left|\!\!
   \begin{array}{c}
  \textrm{weak\ almost} \\
  \textrm{contact\ metric}\\
   \end{array}
 \!\!\right|
\overset{\Phi=d\eta}\longrightarrow
  \left|\!\!
   \begin{array}{c}
  \textrm{weak\ contact} \\
  \textrm{metric} \\
   \end{array}
 \!\!\right|
\overset{\xi\,\textrm{-Killing}}\longrightarrow
  \left|\!\!\!
   \begin{array}{c}
  \textrm{weak} \\
     \textrm{K-contact} \\
   \end{array}
 \!\!\!\right|
\overset{N^{\,(1)}=0}
\longrightarrow
 \left|\!\!\!
   \begin{array}{c}
 \textrm{weak} \\
 \textrm{Sasakian} \\
   \end{array}
 \!\!\!\right|.
\]

\begin{remark}\rm
In \cite{DIP-2001,rst-43,Di-T-2006}, the $f$-contact structure is called the almost ${\cal S}$-structure,
and the normal $f$-contact structure is called the \textit{${\cal S}$-structure}.
A weak metric $f$-structure with the properties $d\Phi=0$ and $N^{\,(1)}=0$ is called a weak ${\cal K}$-structure;
in this case, $\xi_1,\ldots,\xi_s$ are Killing vector fields, see \cite{rst-43};
thus, we get a weak $f$-{K}-contact structure.
\end{remark}

\begin{proposition}[see Theorem~2.2 in \cite{rst-43}]\label{thm6.2}
For a weak $f$-contact structure, the tensors $N^{\,(2)}_i$ and $N^{\,(4)}_{ij}$ vanish;
moreover, $N^{\,(3)}_i$ vanishes if and only if $\,\xi_i$ is a Killing vector field.
In~particular, for a weak $f$-K-contact structure, the tensors $N^{\,(2)}_i$, $N^{\,(3)}_i$ and $N^{\,(4)}_{ij}$ vanish.
\end{proposition}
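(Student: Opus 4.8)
The plan is to treat the three tensors separately, disposing of $N^{\,(2)}_i$ and $N^{\,(4)}_{ij}$ directly from the weak $f$-contact condition \eqref{2.3} and reserving the real work for the equivalence concerning $N^{\,(3)}_i$. For $N^{\,(4)}_{ij}$ I would simply substitute $d\eta^j=\Phi$ into $N^{\,(4)}_{ij}(X)=2\,d\eta^j(\xi_i,X)$ and use $\Phi(\xi_i,X)=g(\xi_i,fX)=-g(f\xi_i,X)=0$, which is exactly the vanishing $d\eta^j(\xi_i,X)=\Phi(\xi_i,X)=0$ already recorded after the definition. For $N^{\,(2)}_i$ I would write $N^{\,(2)}_i(X,Y)=2\,d\eta^i(fX,Y)-2\,d\eta^i(fY,X)=2\,\Phi(fX,Y)-2\,\Phi(fY,X)=2\,g(fX,fY)-2\,g(fY,fX)$, which vanishes by symmetry of $g$. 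Each of these is a one-line computation using nothing beyond \eqref{2.3} and the skew-symmetry of $f$.

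The substantive part is the claim $N^{\,(3)}_i=0\iff\xi_i$ is Killing. My first step is to record that $\pounds_{\xi_i}\Phi=0$ on any weak $f$-contact structure: by Cartan's formula $\pounds_{\xi_i}\Phi=d(\iota_{\xi_i}\Phi)+\iota_{\xi_i}(d\Phi)$, and both terms vanish, the first because $(\iota_{\xi_i}\Phi)(X)=\Phi(\xi_i,X)=0$ and the second because $d\Phi=0$. Next I would derive, from $\Phi(X,Y)=g(X,fY)$ and the derivation property of $\pounds_{\xi_i}$ together with $(\pounds_{\xi_i}f)Y=\pounds_{\xi_i}(fY)-f\,\pounds_{\xi_i}Y$, the pointwise identity $(\pounds_{\xi_i}\Phi)(X,Y)=(\pounds_{\xi_i}g)(X,fY)+g(X,(\pounds_{\xi_i}f)Y)$. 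Since the left-hand side is zero and $(\pounds_{\xi_i}f)Y=N^{\,(3)}_i(Y)$, this gives the key relation $g(X,N^{\,(3)}_i(Y))=-(\pounds_{\xi_i}g)(X,fY)$ for all $X,Y$.

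The forward implication is then immediate: if $\xi_i$ is Killing, $\pounds_{\xi_i}g=0$ forces $g(X,N^{\,(3)}_i(Y))=0$ for all $X$, hence $N^{\,(3)}_i=0$. For the converse I would observe that $N^{\,(3)}_i=0$ makes $(\pounds_{\xi_i}g)(X,fY)=0$; since $f$ has rank $2n$ and $f(TM)={\cal D}$, this says the symmetric tensor $\pounds_{\xi_i}g$ vanishes whenever at least one argument lies in ${\cal D}$. It remains to control $(\pounds_{\xi_i}g)(\xi_k,\xi_j)$ on $\ker f$. Here I would invoke $N^{\,(4)}_{ij}=0$, which evaluated on $\xi_k$ gives $\eta^j([\xi_i,\xi_k])=0$ for all $j$, i.e.\ $[\xi_i,\xi_k]\in{\cal D}$; expanding $(\pounds_{\xi_i}g)(\xi_k,\xi_j)=-g([\xi_i,\xi_k],\xi_j)-g(\xi_k,[\xi_i,\xi_j])$ (the $\xi_i$-derivative of the constant $\delta_{kj}$ being zero) then makes both terms vanish, since the brackets lie in ${\cal D}$ while $\xi_j,\xi_k\perp{\cal D}$. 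Thus $\pounds_{\xi_i}g=0$ and $\xi_i$ is Killing. The \emph{in particular} clause is a corollary: a weak $f$-K-contact structure has all $\xi_i$ Killing by definition, so $N^{\,(3)}_i=0$ in addition to the already-established $N^{\,(2)}_i=N^{\,(4)}_{ij}=0$.

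I expect the only genuine obstacle to be the $\ker f\times\ker f$ block in the converse. The relation coming from $\pounds_{\xi_i}\Phi=0$ only sees arguments paired with a vector in ${\cal D}$, so by itself it cannot force $\xi_i$ to be Killing; one needs an independent handle on the brackets $[\xi_i,\xi_j]$. The resolution is precisely to use $N^{\,(4)}=0$ to place these brackets inside ${\cal D}$, and this interlocking of the two hypotheses is the step I would isolate and state carefully.
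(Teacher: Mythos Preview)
Your argument is correct. The paper, however, does not supply its own proof of this proposition: it is stated with a reference to \cite{rst-43} and no argument is given in the present text, so there is no in-paper proof to compare against. What the paper does record independently is the identity $d\eta^j(\xi_i,X)=\Phi(\xi_i,X)=0$ right after the definition of a weak $f$-contact structure, which is exactly your computation for $N^{\,(4)}_{ij}$, and later (Corollary to Proposition~\ref{lem6.1}) the stronger fact $\nabla_{\xi_i}\xi_j=0$, hence $[\xi_i,\xi_j]=0$; your use of $N^{\,(4)}_{ij}=0$ to place $[\xi_i,\xi_k]$ in ${\cal D}$ is the weaker statement that suffices and avoids invoking that corollary.

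Your handling of the substantive equivalence $N^{\,(3)}_i=0\iff\xi_i$ Killing via $\pounds_{\xi_i}\Phi=0$ and the identity $(\pounds_{\xi_i}\Phi)(X,Y)=(\pounds_{\xi_i}g)(X,fY)+g(X,(\pounds_{\xi_i}f)Y)$ is clean and self-contained; the only point requiring care, as you correctly identified, is the $\ker f\times\ker f$ block in the converse, and your use of $N^{\,(4)}_{ij}(\xi_k)=0$ to obtain $\eta^j([\xi_i,\xi_k])=0$ closes that gap precisely.
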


A ``small" (1,1)-tensor $\widetilde{Q} = Q - {\rm id}_{\,TM}$ measures the difference between a weak $f$-contact structure and a contact one,
and $\widetilde Q=0$ means the classical $f$-contact geometry.
We have
\[
 [\widetilde{Q},{f}]=0,\quad \widetilde{Q}\,{\xi_i}=0,\quad \eta^i\circ\widetilde{Q}=0.
\]

\begin{proposition}[see Corollary~2.1 in \cite{rst-43}]\label{lem6.1}
For a weak $f$-contact structure we get
\begin{align}\label{3.1A}
 2 g((\nabla_{X}{f})Y,Z) = g(N^{\,(1)}(Y,Z),{f} X) {+} 2 g(fX,fY)\bar\eta(Z) {-} 2g(fX,fZ)\bar\eta(Y)
  + N^{\,(5)}(X,Y,Z),
\end{align}
where $\bar\eta=\sum\nolimits_{\,i}\eta^i$, and a skew-symmetric with respect to $Y$ and $Z$ tensor $N^{\,(5)}(X,Y,Z)$ is given~by
\begin{align*}
 N^{\,(5)}(X,Y,Z) &= ({f} Z)\,(g(X, \widetilde QY)) -({f} Y)\,(g(X, \widetilde QZ))
 +g([X, {f} Z], \widetilde QY) - g([X,{f} Y], \widetilde QZ) \\
 & +\, g([Y,{f} Z] -[Z, {f} Y] - {f}[Y,Z],\ \widetilde Q X).
\end{align*}
Taking $X=\xi_i$ in \eqref{3.1A}, we get
\begin{align}\label{3.1AA}
 2\,g((\nabla_{\xi_i}{f})Y,Z) &= N^{\,(5)}(\xi_i,Y,Z) ,\quad 1\le i\le s.
\end{align}
\end{proposition}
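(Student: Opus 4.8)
The plan is to reduce everything to a Koszul-type identity for $\nabla{f}$ and then feed in the weak $f$-contact hypotheses. Since $\Phi(Y,Z)=g(Y,{f}Z)$ is skew-symmetric and $\nabla$ is metric and torsion-free, a one-line check gives $(\nabla_X\Phi)(Y,Z)=g(Y,(\nabla_X{f})Z)$, hence $g((\nabla_X{f})Y,Z)=-(\nabla_X\Phi)(Y,Z)$; so it suffices to determine $\nabla\Phi$. The two structural inputs are the coboundary formula \eqref{E-3.3}, which expresses $d\Phi$ as the cyclic sum of the $(\nabla\Phi)$-terms, and the Nijenhuis formula \eqref{4.NN}, which expresses $[{f},{f}]$ through $\nabla{f}$. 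Combining these two relations lets one solve the resulting linear system for the single quantity $g((\nabla_X{f})Y,Z)$, exactly as in the classical derivation of the covariant derivative of the structure tensor in almost contact metric geometry.

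First I would establish the general weak-metric identity and only then specialize. The decisive computation is the evaluation of $g(N^{\,(1)}(Y,Z),{f}X)$. Its $\xi_i$-part drops out, since $g(\xi_i,{f}X)=-g({f}\xi_i,X)=0$, so $g(N^{\,(1)}(Y,Z),{f}X)=g([{f},{f}](Y,Z),{f}X)$. Expanding $[{f},{f}](Y,Z)$ by \eqref{4.NN} yields terms of the shape $g({f}(\nabla_\bullet{f})\bullet,{f}X)$, and on each such factor I would apply the compatibility relation \eqref{2.2}, $g({f}A,{f}B)=g(A,QB)-\sum_i\eta^i(A)\eta^i(B)$. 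Writing $Q={\rm id}_{\,TM}+\widetilde Q$ splits every occurrence into a classical piece (with $Q$ replaced by ${\rm id}_{\,TM}$) and a $\widetilde Q$-correction; the same split also occurs wherever the relation ${f}^2=-Q+\sum_i\eta^i\otimes\xi_i$ is used in place of the classical ${f}^2=-{\rm id}+\eta\otimes\xi$.

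Collecting the classical pieces reproduces the familiar $f$-contact identity, which here simplifies sharply: its two $d\Phi$-terms vanish because $d\Phi=0$ by \eqref{2.3}, the $N^{\,(2)}_i$-terms vanish by Proposition~\ref{thm6.2}, and the remaining $d\eta^i$-terms are rewritten via $d\eta^i=\Phi$ --- summing $2\,d\eta^i({f}Y,X)\eta^i(Z)=2\,g({f}X,{f}Y)\eta^i(Z)$ over $i$ gives $2\,g({f}X,{f}Y)\bar\eta(Z)$, and the term with $Y$ and $Z$ swapped gives $-2\,g({f}X,{f}Z)\bar\eta(Y)$. This accounts for the first three terms of \eqref{3.1A}. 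The $\widetilde Q$-corrections, after the cyclic $d\Phi$-relation has been used to isolate $g((\nabla_X{f})Y,Z)$, must then be shown to reorganize --- via the Koszul formula, $[\widetilde Q,{f}]=0$ and $\widetilde Q\xi_i=0$ --- exactly into $N^{\,(5)}(X,Y,Z)$, including its stated skew-symmetry in $Y,Z$. I expect this last matching to be the main obstacle, since every other step is a direct specialization of the classical computation. Finally, \eqref{3.1AA} is immediate: putting $X=\xi_i$ annihilates $g(N^{\,(1)}(Y,Z),{f}\xi_i)$ because ${f}\xi_i=0$, and annihilates both $\bar\eta$-terms because $g({f}\xi_i,{f}Y)=g({f}\xi_i,{f}Z)=0$, so only $N^{\,(5)}(\xi_i,Y,Z)$ survives.
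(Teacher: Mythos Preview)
The paper does not give its own proof of this proposition: it is stated with the citation ``see Corollary~2.1 in \cite{rst-43}'' and used without argument. Your outline is the standard derivation --- combine the Koszul/coboundary expression for $\nabla\Phi$ with the Nijenhuis expansion \eqref{4.NN}, apply \eqref{2.2}, and split $Q={\rm id}_{\,TM}+\widetilde Q$ --- and it is the same route taken in \cite{rst-43}; your identification of the $\widetilde Q$-bookkeeping as the only nontrivial step is accurate, and your derivation of \eqref{3.1AA} from \eqref{3.1A} is correct.
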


Recall that a distribution $\widetilde{\cal D}\subset TM$ is \textit{totally geodesic} if and only if
$\nabla_X Y+\nabla_Y X\in\widetilde{\cal D}$ for any vector fields $X,Y\in\widetilde{\cal D}$ --
this is the case when {any geodesic of $M$ that is tangent to $\widetilde{\cal D}$ at one point is tangent to $\widetilde{\cal D}$ at all its points}, e.g., \cite[Section~1.3.1]{Rov-Wa-2021}. An~integrable totally geodesic distribution determines a totally geodesic foliation.

Let $\mathfrak{g}$ be a Lie algebra of dimension $s$.
We say that a foliation $\mathcal F$ of dimension $s$ on a smooth connected manifold $M$  is a $\mathfrak{g}$-\textit{foliation} if there exist
complete vector fields $\xi_1,\ldots,\xi_s$ on $M$, which restricted to each leaf of $\mathcal F$ form a parallelism of this submanifold
which is Lie algebra isomorphic to $\mathfrak{g}$, see, for example, \cite{RWo-2}.
A Lie algebra $\mathfrak{g}$ is abelian if its bracket is identically zero.
We show that $\ker f$ of a weak $f$-contact manifold defines a $\mathfrak{g}$-foliation with an abelian Lie algebra $\mathfrak{g}$.

\begin{corollary}
For a weak $f$-contact structure, the distribution $\ker f$ is integrable and defines a
$\mathfrak{g}$-foliation with totally geodesic flat leaves.
\end{corollary}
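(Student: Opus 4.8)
The plan is to prove three properties in sequence: integrability of $\ker f$, that the induced foliation has totally geodesic flat leaves, and finally that it is a $\mathfrak{g}$-foliation with abelian $\mathfrak{g}$.

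For integrability, I would show $[\xi_i,\xi_j]\in\ker f$ for all $i,j$, and in fact that the bracket vanishes. Since the characteristic vector fields are orthonormal and span $\ker f$, it suffices to compute $\eta^k([\xi_i,\xi_j])$ for each $k$ and to compute the $\mathcal{D}$-component. The key tool is Proposition~\ref{thm6.2}: for a weak $f$-contact structure the tensors $N^{\,(4)}_{ij}$ vanish, which means $(\pounds_{\xi_i}\eta^j)(X)=\xi_i(\eta^j(X))-\eta^j([\xi_i,X])=0$ for all $X$. Taking $X=\xi_k$ gives $\eta^j([\xi_i,\xi_k])=\xi_i(\eta^j(\xi_k))=\xi_i(\delta^j_k)=0$, so the bracket has no $\ker f$-component. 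For the $\mathcal{D}$-component, I would use that $d\eta^j(\xi_i,X)=\Phi(\xi_i,X)=0$ for all $X$ (noted just after the definition of weak $f$-contact structure): from the formula \eqref{3.3A} for $d\eta^j$ together with $N^{\,(4)}_{ij}=0$, one extracts $g([\xi_i,\xi_k],fX)=0$ for all $X$, and since $f$ has rank $2n$ with image $\mathcal{D}$, the $\mathcal{D}$-projection of $[\xi_i,\xi_k]$ vanishes. Hence $[\xi_i,\xi_j]=0$, which simultaneously gives integrability (by Frobenius) and identifies the Lie algebra $\mathfrak{g}$ as abelian.

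For the totally geodesic and flatness claims, I would invoke the Killing property implicitly carried by the $f$-contact structure. Here I expect the cleanest route is the characterization $f=-\nabla\xi_i$ from Theorem~\ref{T-3.1} (referenced in the excerpt), or more directly the Killing equation. Since $\nabla_{\xi_i}\xi_j$ lies in $\ker f$: one computes $g(\nabla_{\xi_i}\xi_j,\xi_k)=-g(\xi_j,\nabla_{\xi_i}\xi_k)$, and using the Killing equation $g(\nabla_{\xi_i}\xi_k,\xi_i)$-type antisymmetry together with $[\xi_i,\xi_j]=0$ (so $\nabla_{\xi_i}\xi_j=\nabla_{\xi_j}\xi_i$), one deduces $g(\nabla_{\xi_i}\xi_j,\xi_k)=0$ for all $k$, so $\nabla_{\xi_i}\xi_j\in\mathcal{D}$. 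In fact, since $\xi_i$ are Killing and commuting, $\nabla_{\xi_i}\xi_j=\nabla_{\xi_j}\xi_i$ is symmetric while the Killing condition forces skew-symmetry of $X\mapsto\nabla_X\xi_i$, giving $\nabla_{\xi_i}\xi_j=0$. This immediately yields totally geodesic leaves (the second fundamental form $\nabla_{\xi_i}\xi_j$ vanishes, a fortiori its symmetrization lies in $\ker f$) and flatness: with $\nabla_{\xi_i}\xi_j=0$ and $[\xi_i,\xi_j]=0$, the leafwise curvature $R(\xi_i,\xi_j)\xi_k=\nabla_{\xi_i}\nabla_{\xi_j}\xi_k-\nabla_{\xi_j}\nabla_{\xi_i}\xi_k-\nabla_{[\xi_i,\xi_j]}\xi_k=0$.

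The main obstacle I anticipate is the $\mathcal{D}$-component computation for the bracket: extracting $g([\xi_i,\xi_j],fX)=0$ cleanly from the vanishing of $d\eta$ on $(\xi_i,\cdot)$ requires care, because one must relate the abstract bracket to the fundamental form via $\Phi=d\eta^k$ and then invert using the rank-$2n$ nonsingularity of $f$ on $\mathcal{D}$. A subtlety is that $f$ is only invertible on $\mathcal{D}$ (its kernel being $\ker f$), so I must first establish that the bracket has no $\ker f$-component before concluding from the $\mathcal{D}$-pairing that it vanishes entirely. Once $[\xi_i,\xi_j]=0$ and $\nabla_{\xi_i}\xi_j=0$ are in hand, the remaining foliation-theoretic statements (completeness of the $\xi_i$, isomorphism of the leafwise parallelism to the abelian algebra $\mathfrak{g}=\mathbb{R}^s$) are formal and follow the classical template referenced in \cite{RWo-2}.
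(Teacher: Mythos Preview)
There is a genuine gap in your argument for the totally geodesic claim. The corollary is stated for a weak $f$-\emph{contact} structure, not a weak $f$-K-contact structure, so the characteristic vector fields $\xi_i$ are \emph{not} assumed to be Killing. Your sentence ``I would invoke the Killing property implicitly carried by the $f$-contact structure'' is simply false: the Killing condition is precisely the extra hypothesis that distinguishes weak $f$-K-contact from weak $f$-contact (see Definition~3.1). Consequently, neither Theorem~\ref{T-3.1} nor the identity $f=-\nabla\xi_i$ is available here, and your deduction of $\nabla_{\xi_i}\xi_j=0$ from ``Killing plus commuting'' does not go through.

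The paper's route avoids this entirely. It uses Proposition~\ref{lem6.1}, specifically \eqref{3.1AA} with $Y=\xi_j$, together with the vanishing $N^{\,(5)}(\xi_i,\xi_j,Z)=0$ from \eqref{KK}, to obtain $(\nabla_{\xi_i}f)\xi_j=0$. Since $f\xi_j=0$, this reads $f\,\nabla_{\xi_i}\xi_j=0$, i.e.\ $\nabla_{\xi_i}\xi_j\in\ker f$. Combining this with your (correct) observation that $\eta^k([\xi_i,\xi_j])=0$ and with the metric-compatibility identity $g(\nabla_{\xi_i}\xi_j,\xi_k)=-g(\xi_j,\nabla_{\xi_i}\xi_k)$, a short Koszul-type cycle gives $\nabla_{\xi_i}\xi_j=0$, from which $[\xi_i,\xi_j]=0$ and $R_{\xi_i,\xi_j}\xi_k=0$ follow at once. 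Note also that your proposed extraction of the $\mathcal{D}$-component of $[\xi_i,\xi_k]$ directly from $N^{\,(4)}=0$ and \eqref{3.3A} is not justified as written: those identities control only the $\eta^j$-components of the bracket, not its pairing with an arbitrary $fX$. The missing $\mathcal{D}$-information comes from \eqref{3.1AA}, not from $N^{\,(4)}$.
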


\begin{proof}
Recall that $[\xi_i,\xi_j]=\nabla_{\xi_i}\,\xi_j -\nabla_{\xi_j}\,\xi_i$.
Taking $Y=\xi_j$ in \eqref{3.1AA}, we get
\begin{align}\label{2-xi}
 \nabla_{\xi_i}\,\xi_j = 0,\qquad 1\le i,j\le s.
\end{align}
By \eqref{2-xi}, $\ker f$ of a weak $f$-contact structure defines
a $\mathfrak{g}$-foliation with an abelian Lie algebra (that is $[\xi_i,\xi_j]=0$)
with totally geodesic (that is $\nabla_{\xi_i}\,\xi_j = 0$) flat leaves (that is $R_{\xi_i,\xi_j}\,\xi_k=0$).
\end{proof}

\begin{remark}\rm
Only one new tensor $N^{(5)}$ (vanishing at $\widetilde Q=0$), which supplements the sequence of well-known tensors $N^{\,(i)},\ i=1,2,3,4$,
is needed to study the weak $f$-contact structure.
For~particular values of the tensor $N^{\,(5)}$ we get
\begin{align}\label{KK}
\nonumber
 N^{\,(5)}(X,\xi_i,Z) & = -N^{\,(5)}(X, Z, \xi_i) = g( N^{\,(3)}_i(Z),\, \widetilde Q X),\\
\nonumber
 N^{\,(5)}(\xi_i,Y,Z) &= g([\xi_i, {f} Z], \widetilde QY) -g([\xi_i,{f} Y], \widetilde QZ),\\
 N^{\,(5)}(\xi_i,\xi_j,Y) &= N^{\,(5)}(\xi_i,Y,\xi_j)=0.
\end{align}
\end{remark}

\begin{definition}[see \cite{rst-43}]\rm
Two framed weak $f$-structures $({f},Q,{\xi_i},{\eta^i})$ and $({f}',Q',{\xi_i},{\eta^i})$ on a smooth manifold $M$ are said to be \textit{homothetic}
if the following is valid for some real $\lambda>0$:
\begin{align}\label{Tran'}
  {f} = \sqrt\lambda\ {f}', \quad
  Q\,|_{\,{\mathcal D}}=\lambda\,Q'|_{\,\mathcal D} .
\end{align}
Two weak metric $f$-structures $({f},Q,{\xi_i},{\eta^i},g)$ and $({f}',Q',{\xi_i},{\eta^i},g')$ on $M$
are said to be \textit{homothetic} if they satisfy conditions \eqref{Tran'} and
\begin{align}\label{Tran2'}
 g|_{\,{\mathcal D}} = \lambda^{-\frac12}\,g'|_{\,{\mathcal D}},\quad
 g({\xi_i},\,\cdot) = {g}'({\xi_i},\,\cdot) .
\end{align}
\end{definition}

\begin{lemma}[see \cite{rst-43}]\label{P-22}
Let $({f},Q,{\xi_i},{\eta^i})$ be a framed weak $f$-structure such that
 $Q\,|_{\,{\mathcal D}}=\lambda\,{\rm id}_{\mathcal D}$
for some real $\lambda>0$. Then the following is true:

\noindent
$\bullet$ $({f}', {\xi_i}, {\eta^i})$ is a framed $f$-contact structure, where ${f}'$ is given by \eqref{Tran'}$_1$.

\noindent
$\bullet$ If $({f},Q,{\xi_i},{\eta^i},g)$ is a weak $f$-contact structure
and \eqref{Tran'}$_1$ and \eqref{Tran2'} are valid,
then $({f}',{\xi_i},{\eta^i},{g}')$ is an $f$-contact structure.
\end{lemma}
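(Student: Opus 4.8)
The plan is to verify Lemma~\ref{P-22} by checking that the homothetically rescaled data satisfy, term by term, the defining axioms of a framed $f$-contact (respectively, $f$-contact) structure, namely the classical versions of \eqref{2.1}, \eqref{2.2} and \eqref{2.3} with $Q'=\mathrm{id}_{TM}$. The crucial simplifying hypothesis is $Q|_{\mathcal D}=\lambda\,\mathrm{id}_{\mathcal D}$, so that on $\mathcal D$ the nonsingular tensor $Q$ acts as a positive scalar, while on $\ker f$ we already have $Q\xi_i=\xi_i$. This is what makes the scaling \eqref{Tran'}--\eqref{Tran2'} produce an \emph{honest} (classical) structure, since the ``weakness'' of the structure lives precisely in the deviation $\widetilde Q=Q-\mathrm{id}_{TM}$, which is now a multiple of the identity on $\mathcal D$.

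For the first bullet, I would compute $(f')^2$ on a general vector $X=X^\top+X^\bot$ split as in the paragraph after \eqref{2.2}. From \eqref{Tran'}$_1$, $f'=\lambda^{-1/2}f$, so $(f')^2=\lambda^{-1}f^2$. Using $f^2=-Q+\sum_i\eta^i\otimes\xi_i$ from \eqref{2.1}, and evaluating on $\mathcal D$ (where $Q=\lambda\,\mathrm{id}$) versus on $\ker f$ separately, I expect $(f')^2 X^\bot=0$ (since $f\xi_i=0$) and $(f')^2 X^\top=\lambda^{-1}(-\lambda X^\top)=-X^\top$ on $\mathcal D$; combining these gives $(f')^2=-\mathrm{id}+\sum_i\eta^i\otimes\xi_i$, which is exactly the classical $f$-structure identity $f'^{\,3}+f'=0$ together with $\eta^i(\xi_j)=\delta^i_j$ (unchanged) and $Q'\xi_i=\xi_i$ with $Q'=\mathrm{id}$. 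I would also note $f'\xi_i=0$ and $\eta^i\circ f'=0$ follow immediately from the corresponding weak identities listed after \eqref{2.1-D}. This establishes that $(f',\xi_i,\eta^i)$ is a framed $f$-contact structure.

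For the second bullet, assuming additionally that the original data form a weak $f$-contact structure and that the metric is rescaled by \eqref{Tran2'}, I would check compatibility \eqref{2.2} for $g'$ with $Q'=\mathrm{id}$. The computation splits: for $X,Y\in\mathcal D$, one has $g'(f'X,f'Y)=\lambda^{1/2}g(f'X,f'Y)=\lambda^{1/2}\lambda^{-1}g(fX,fY)=\lambda^{-1/2}g(fX,fY)$, and by \eqref{2.2} with $\eta^i(X)=\eta^i(Y)=0$ on $\mathcal D$ this equals $\lambda^{-1/2}g(X,QY)=\lambda^{-1/2}\lambda\,g(X,Y)=\lambda^{1/2}g(X,Y)=g'(X,Y)$, as required for the classical identity $g'(f'X,f'Y)=g'(X,Y)$ on $\mathcal D$. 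On $\ker f$ both sides vanish against the $-\sum_i\eta^i(X)\eta^i(Y)$ correction term, which is unchanged since $g'(\xi_i,\cdot)=g(\xi_i,\cdot)$ keeps the $\eta^i$ fixed. Finally I would verify the contact condition \eqref{2.3} for the primed structure: the fundamental form transforms as $\Phi'(X,Y)=g'(X,f'Y)$, and one checks directly that $\Phi'=\lambda^{-1/2}\cdot\lambda^{1/2}\Phi=\Phi$ on $\mathcal D$ after accounting for both the $g$- and $f$-rescalings, so $\Phi'=d\eta^i$ is inherited from $\Phi=d\eta^i$ (the $\eta^i$ being unchanged). The main obstacle—though it is more bookkeeping than a genuine difficulty—is to keep the powers of $\lambda$ consistent across the three places where $\lambda$ enters ($f$, $Q|_{\mathcal D}$, and $g|_{\mathcal D}$), and to treat the $\mathcal D$ and $\ker f$ components separately so that the mixed terms and the $\eta^i\otimes\eta^i$ corrections cancel correctly; once the scalar-on-$\mathcal D$ reduction $Q|_{\mathcal D}=\lambda\,\mathrm{id}$ is exploited, every axiom reduces to a one-line scalar identity.
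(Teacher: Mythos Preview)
Your argument is correct and is the natural direct verification; since the paper does not give its own proof here (it simply cites \cite{rst-43}), there is nothing substantive to compare against. Two small remarks: (i) you have swapped the paper's $\top/\bot$ convention---in this paper $X^\top=\sum_i\eta^i(X)\xi_i\in\ker f$, so it is $X^\top$ that is killed by $f$ and $X^\bot\in\mathcal D$ that satisfies $(f')^2 X^\bot=-X^\bot$; (ii) the phrase ``framed $f$-contact structure'' in the first bullet is evidently a slip for ``framed $f$-structure,'' and your computation $(f')^2=-\mathrm{id}+\sum_i\eta^i\otimes\xi_i$ is exactly what needs to be shown there.
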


\section{Killing vector fields of unit length}
\label{sec:02}

Here, we characterize weak $f$-{K}-contact manifolds among all weak $f$-contact manifolds
and find conditions under which a Riemannian manifold endowed with a set of unit Killing vector fields is a weak $f$-{K}-contact manifold.
The tensor $N^{\,(3)}_i$ is important for $f$-contact manifolds.
Therefore, we define for a {weak} $f$-contact manifold the tensor fields frame $h=(h_1,\ldots,h_s)$,~where
\begin{align*}
 h_i=({1}/{2})\, N^{\,(3)}_i = ({1}/{2})\,\pounds_{\xi_i}{f} .
\end{align*}
By Proposition~\ref{thm6.2}, $h_i=0$ if and only if $\xi_i$ is a Killing vector field.
First, we calculate
\begin{align}\label{4.2}
 (\pounds_{\xi_i}{f})X \overset{\eqref{3.3B}} = \nabla_{\xi_i}({f} X) - \nabla_{{f} X}\,\xi_i - {f}(\nabla_{\xi_i}X - \nabla_{X}\,\xi_i)
 = (\nabla_{\xi_i}{f})X - \nabla_{{f} X}\,\xi_i + {f}\nabla_X\,\xi_i.
\end{align}
Taking $X=\xi_i$ in \eqref{4.2} and using $\nabla_{\xi_i}\,\xi_j=0$, see \eqref{2-xi}, and
 $(\nabla_{\xi_i}{f})\,\xi_j =0$,
see \eqref{3.1AA} with $Y=\xi_j$, we get the equality $h_i\,\xi_j = 0$.
From \eqref{4.2}, using $g(\nabla_X\,\xi_i,\xi_j)=0$ and $g((\nabla_{\xi_i}{f})\,\xi_j,Z)=\frac12 N^{\,(5)}(\xi_i,\xi_j,Z)=0$, see \cite{rst-43},
we conclude that $\eta_j\circ h_i=0$.

For an $f$-contact structure, the linear operator $h_i$ is self-adjoint, trace-free and anti-commutes with $f$,
i.e., $h_i{f}+{f}\, h_i=0$, see \cite{DIP-2001}.
We generalize this for a weak $f$-contact structure.

\begin{proposition}[see \cite{rst-43,Rov-splitting}]\label{P-4.1}\rm
For a weak $f$-contact structure $(f,Q,\xi_i,\eta^i,g)$, the tensor $h_i$ and its conjugate tensor $h_i^*$ satisfy
\begin{eqnarray*}
 g((h_i-h_i^*)X, Y) &=& ({1}/{2})\,N^{\,(5)}(\xi_i, X, Y),\quad X,Y\in TM,\\
 h_i{f}+{f}\, h_i &=& -(1/2)\,\pounds_{\xi_i}\widetilde{Q} ,\\
 h_i{Q}-{Q}\,h_i &=& (1/2)[\,f,\, \pounds_{\xi_i}\widetilde{Q}\,] .
\end{eqnarray*}
\end{proposition}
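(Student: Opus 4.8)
The plan is to treat the three identities separately, since the first one concerns the skew-symmetric part of $h_i$ while the other two are purely algebraic consequences of the derivation property of $\pounds_{\xi_i}$. Throughout I would use the structural facts already at hand: $f$ is skew-symmetric and $Q$ (hence $\widetilde Q$) is self-adjoint, $f^2=-Q+\sum_k\eta^k\otimes\xi_k$ and $[Q,f]=0$ from \eqref{2.1}, the relation $\widetilde Q\,\xi_i=0$, the vanishing of $N^{(4)}_{ij}$ (Proposition~\ref{thm6.2}), the abelian bracket $[\xi_i,\xi_j]=0$ from \eqref{2-xi}, and the connection expression $2h_iX=(\nabla_{\xi_i}f)X-\nabla_{fX}\xi_i+f\nabla_X\xi_i$ of \eqref{4.2} together with \eqref{3.1AA}.

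For the first identity I would start from $g((h_i-h_i^*)X,Y)=g(h_iX,Y)-g(h_iY,X)$, insert $2h_iX$ from \eqref{4.2}, and antisymmetrize in $X,Y$. The terms carrying $\nabla_{\xi_i}f$ combine, via \eqref{3.1AA} and the skew-symmetry of $N^{(5)}(\xi_i,\cdot,\cdot)$ in its last two slots (Proposition~\ref{lem6.1}), into exactly $N^{(5)}(\xi_i,X,Y)$. It then remains to show that the surviving $\nabla\xi_i$-terms cancel. For this I would first record, from $\Phi=d\eta^i$ (see \eqref{2.3}) and the exterior-derivative formula \eqref{3.3A}, the identity $g(\nabla_X\xi_i,Y)-g(\nabla_Y\xi_i,X)=2\,\Phi(X,Y)$, i.e. the skew-symmetric part of the operator $X\mapsto\nabla_X\xi_i$ equals $-f$. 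Writing $\nabla_X\xi_i=P_iX-fX$ with $P_i$ self-adjoint and substituting, the $f^2$-contributions cancel by skew-symmetry of $f$, and what is left equals twice the skew-symmetric part of the commutator $[P_i,f]$.

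The key observation that closes the first identity is that $[P_i,f]$ is \emph{self-adjoint}: since $f^*=-f$ and $P_i^*=P_i$, one has $(P_if-fP_i)^*=P_if-fP_i$, so its skew-symmetric part vanishes and the $\nabla\xi_i$-terms indeed cancel, leaving $g((h_i-h_i^*)X,Y)=\frac12\,N^{(5)}(\xi_i,X,Y)$. This is the step I expect to be the main obstacle, since it is the only place where metric compatibility, the skew-symmetry of $f$, and the Killing-type splitting of $\nabla\xi_i$ must all be used at once; the two other identities are comparatively routine.

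The remaining identities are algebraic. For the second, the derivation property of the Lie derivative gives $h_if+fh_i=\frac12\big((\pounds_{\xi_i}f)f+f(\pounds_{\xi_i}f)\big)=\frac12\,\pounds_{\xi_i}(f^2)$; then $f^2=-Q+\sum_k\eta^k\otimes\xi_k$, combined with $\pounds_{\xi_i}\mathrm{id}=0$, $\pounds_{\xi_i}\xi_k=[\xi_i,\xi_k]=0$ and $\pounds_{\xi_i}\eta^k=N^{(4)}_{ik}=0$, yields $\pounds_{\xi_i}(f^2)=-\pounds_{\xi_i}\widetilde Q$, which is the claim. For the third, I would apply $\pounds_{\xi_i}$ to the commutation relation $Qf=fQ$ (valid because $[Q,f]=0$), use the Leibniz rule with $\pounds_{\xi_i}f=2h_i$ and $\pounds_{\xi_i}Q=\pounds_{\xi_i}\widetilde Q$, and rearrange the resulting equality to isolate $h_iQ-Qh_i$, which produces the stated commutator identity involving $\frac12\,[f,\pounds_{\xi_i}\widetilde Q]$.
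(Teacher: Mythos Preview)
The paper does not actually prove this proposition---it is quoted from the author's earlier works \cite{rst-43,Rov-splitting}---so there is no in-text argument to compare your proposal against. That said, your argument is correct and self-contained.

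For the first identity, your computation goes through: from \eqref{4.2} and \eqref{3.1AA} the $(\nabla_{\xi_i}f)$-terms contribute exactly $N^{(5)}(\xi_i,X,Y)$ (since $\nabla_{\xi_i}f$ is skew-symmetric), and your splitting $\nabla_X\xi_i=P_iX-fX$ together with the observation that $[P_i,f]$ is self-adjoint (because $f^*=-f$ and $P_i^*=P_i$) kills the remaining terms; the $f^2$-contributions cancel because $f^2=-Q+\sum_k\eta^k\otimes\xi_k$ is self-adjoint. The second identity is exactly $\frac12\pounds_{\xi_i}(f^2)$ as you say, with $\pounds_{\xi_i}(\eta^k\otimes\xi_k)=0$ following from $N^{(4)}_{ik}=0$ and $[\xi_i,\xi_k]=0$. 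For the third, applying $\pounds_{\xi_i}$ to $Qf=fQ$ gives $2(h_iQ-Qh_i)=(\pounds_{\xi_i}\widetilde Q)f-f(\pounds_{\xi_i}\widetilde Q)$, which matches the stated commutator up to the sign convention chosen for $[\,\cdot\,,\,\cdot\,]$; this is cosmetic, not a gap.
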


The following corollary generalizes the known property of $f$-contact manifolds.

\begin{corollary}
 Let a weak $f$-contact manifold satisfy $\pounds_{\xi_i}\widetilde{Q} =0$, then
 $\tr h_i=0$.
\end{corollary}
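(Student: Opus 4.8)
The plan is to feed the hypothesis $\pounds_{\xi_i}\widetilde Q=0$ directly into the second identity of Proposition~\ref{P-4.1}, which instantly collapses to the anti-commutation relation $h_i{f}+{f}\,h_i=0$ -- exactly the property that forces trace-freeness in the classical $f$-contact case. Before exploiting it, I would record the two structural facts that localise $h_i$ to the contact distribution ${\cal D}$. First, we already have $h_i\,\xi_j=0$, derived just before Proposition~\ref{P-4.1}. Second, since $N^{\,(4)}_{ij}=0$ for a weak $f$-contact structure (Proposition~\ref{thm6.2}) and $\eta^j\circ f=0$, one checks that $\eta^j\big((\pounds_{\xi_i}{f})X\big)=\xi_i(\eta^j(fX))=0$ for every $X$, so that $h_i$ maps all of $TM$ into ${\cal D}$. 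Hence $h_i$ respects the orthogonal splitting $TM={\cal D}\oplus\ker f$ and vanishes on $\ker f$.

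Consequently the full trace reduces to the trace of $h_i$ as an endomorphism of ${\cal D}$: choosing an orthonormal frame adapted to ${\cal D}\oplus\ker f$, the $\ker f$-diagonal terms drop out because $h_i\,\xi_j=0$, leaving $\tr h_i=\tr\big(h_i|_{\cal D}\big)$. The second ingredient I would establish is that ${f}|_{\cal D}$ is invertible: since ${f}$ has rank $2n$ with $\ker f=\operatorname{span}\{\xi_j\}$ and ${\cal D}$ is ${f}$-invariant with ${\cal D}\cap\ker f=0$, the restriction ${f}|_{\cal D}\colon{\cal D}\to{\cal D}$ is a bijection (equivalently, ${f}^2=-Q$ is negative definite on ${\cal D}$ by \eqref{2.1} and \eqref{2.2}, $Q|_{\cal D}$ being positive definite).

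With these two facts the computation is immediate. On ${\cal D}$ the anti-commutation $h_i{f}=-{f}\,h_i$ conjugates to ${f}^{-1}h_i{f}=-h_i$, so $h_i|_{\cal D}$ is similar to $-h_i|_{\cal D}$; since trace is invariant under conjugation, $\tr(h_i|_{\cal D})=\tr(-h_i|_{\cal D})=-\tr(h_i|_{\cal D})$, whence $\tr h_i=\tr(h_i|_{\cal D})=0$. I prefer this similarity argument to the classical eigenvalue-pairing one because, unlike in the $f$-contact case, $h_i$ need not be self-adjoint here: the first identity of Proposition~\ref{P-4.1} shows $h_i-h_i^*$ is governed by $N^{\,(5)}(\xi_i,\cdot,\cdot)$, which does not obviously vanish under $\pounds_{\xi_i}\widetilde Q=0$, so diagonalisability cannot simply be invoked.

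The main obstacle I anticipate is not the closing trace identity but the bookkeeping that isolates $h_i$ on ${\cal D}$: verifying $h_i(TM)\subseteq{\cal D}$ and the invertibility of ${f}|_{\cal D}$ are what make the conjugation step legitimate, and these are precisely the points where the weak structure (through $Q\neq{\rm id}_{\,TM}$) must be handled with the metric compatibility \eqref{2.2}, rather than taken for granted as in the classical setting.
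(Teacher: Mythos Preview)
Your argument is correct and in fact tighter than the paper's. Both routes start identically: feed $\pounds_{\xi_i}\widetilde Q=0$ into Proposition~\ref{P-4.1} to obtain the anti-commutation $h_i{f}+{f}\,h_i=0$. The paper then finishes with the classical eigenvalue-pairing heuristic (``if $\lambda$ is an eigenvalue so is $-\lambda$, hence $\tr h_i=0$''), whereas you localise $h_i$ to ${\cal D}$ and use that ${f}|_{\cal D}$ is invertible to write ${f}^{-1}h_i{f}=-h_i$ on ${\cal D}$, concluding via similarity-invariance of the trace. Your version is the more robust one: as you point out, in the weak setting $h_i$ is not known to be self-adjoint (the defect $h_i-h_i^*$ is governed by $N^{\,(5)}(\xi_i,\cdot,\cdot)$), so diagonalisability is not available and the paper's terse eigenvalue phrasing leaves the matching of algebraic multiplicities unaddressed. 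Your explicit checks that $h_i\,\xi_j=0$, that $h_i(TM)\subseteq{\cal D}$ (using $N^{\,(4)}_{ij}=0$ and $\eta^j\circ f=0$), and that ${f}|_{\cal D}$ is a bijection are exactly the bookkeeping needed to make the conjugation step legitimate, and they supply details the paper leaves implicit.
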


\begin{proof}
If $h_i X=\lambda X$, then using $h_i f =- f h_i$ (by the assumptions and Proposition~\ref{P-4.1}), we get $h_i f X=-\lambda f X$.
Thus, if $\lambda$ is an eigenvalue of $h_i$, then $-\lambda$ is also an eigenvalue of $h_i$; hence, $\tr h_i=0$.
\end{proof}

\begin{lemma}
On a weak $f$-{K}-contact manifold $M^{2n+s}(f,Q,\xi_i,\eta^i,g)$, we get $N^{\,(1)}({\xi_i},\,\cdot)=0$~and
\begin{eqnarray}
\label{E-31}
 N^{\,(5)}({\xi_i},\,\cdot\,,\,\cdot) & = & N^{\,(5)}(\,\cdot\,,\,{\xi_i},\,\cdot) =0,\\
 \label{E-31A}
 \pounds_{{\xi_i}}\,\widetilde{Q} &=& \nabla_{\xi_i}\,\widetilde{Q} = 0 ,\\
\label{E-30-phi}
 \nabla_{{\xi_i}}\,{f} &=& 0.
\end{eqnarray}
\end{lemma}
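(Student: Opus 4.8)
The plan is to exploit the single fact that the Killing condition forces $h_i=\tfrac12 N^{(3)}_i=0$ (Proposition~\ref{thm6.2}) and to read off each assertion from the structural formulas of Propositions~\ref{lem6.1} and \ref{P-4.1}. I would begin with \eqref{E-31}. For the middle slot, the identity $N^{(5)}(X,\xi_i,Z)=g(N^{(3)}_i(Z),\widetilde{Q}X)$ from \eqref{KK} vanishes because $N^{(3)}_i=0$. For the first slot, Proposition~\ref{P-4.1} gives $g((h_i-h_i^*)X,Y)=\tfrac12 N^{(5)}(\xi_i,X,Y)$; since $h_i=0$ forces $h_i^*=0$, the left-hand side is zero and thus $N^{(5)}(\xi_i,\cdot,\cdot)=0$.

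Next I would establish \eqref{E-30-phi}. Equation \eqref{3.1AA} reads $2\,g((\nabla_{\xi_i}f)Y,Z)=N^{(5)}(\xi_i,Y,Z)$, whose right-hand side is zero by \eqref{E-31}; as $Y,Z$ are arbitrary, $\nabla_{\xi_i}f=0$. Writing $A_iX:=\nabla_X\xi_i$, I would then feed $\nabla_{\xi_i}f=0$ together with $\pounds_{\xi_i}f=2h_i=0$ into \eqref{4.2} to obtain the commutation $fA_i=A_if$. The Killing property makes $A_i$ skew-symmetric, and combined with $\nabla_{\xi_j}\xi_i=0$ from \eqref{2-xi} it yields $g(A_iX,\xi_j)=-g(\nabla_{\xi_j}\xi_i,X)=0$, so $A_i$ maps $TM$ into $\mathcal D$ and annihilates $\ker f$.

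The crux is \eqref{E-31A}. Proposition~\ref{P-4.1} gives $\pounds_{\xi_i}\widetilde{Q}=-2(h_if+fh_i)=0$ directly from $h_i=0$. For $\nabla_{\xi_i}\widetilde{Q}=0$ I would use the general identity $\pounds_{\xi_i}\widetilde{Q}=\nabla_{\xi_i}\widetilde{Q}-[A_i,\widetilde{Q}]$ (obtained by expanding the Lie brackets through $\nabla$), reducing the claim to $[A_i,\widetilde{Q}]=0$. The key observation is that on $\mathcal D$ one has $\widetilde{Q}=-f^2-{\rm id}$ by \eqref{2.1} (since $\eta^i$ vanishes on $\mathcal D$), so $\widetilde{Q}$ is a polynomial in $f$ there; as $A_i$ commutes with $f$ and respects the splitting $TM=\mathcal D\oplus\ker f$ (sending $\ker f$ to $0$, where $\widetilde{Q}$ also vanishes), it commutes with $\widetilde{Q}$ on all of $TM$. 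Finally, for $N^{(1)}(\xi_i,\cdot)=0$ I would insert $\nabla_{\xi_i}f=0$ and $f\xi_i=0$ into \eqref{4.NN}, simplify $(\nabla_Yf)\xi_i=-fA_iY$, use $fA_i=A_if$ to cancel the surviving terms, and note that the correction $2\sum_j d\eta^j(\xi_i,Y)\xi_j$ vanishes because $d\eta^j(\xi_i,\cdot)=\Phi(\xi_i,\cdot)=0$.

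The main obstacle is the passage $\pounds_{\xi_i}\widetilde{Q}=0\Rightarrow\nabla_{\xi_i}\widetilde{Q}=0$: the two derivatives differ by $[A_i,\widetilde{Q}]$, and the entire difficulty lies in showing this commutator is zero. The clean resolution is the representation $\widetilde{Q}|_{\mathcal D}=-f^2-{\rm id}$, which upgrades the already-established relation $fA_i=A_if$ to commutation with $\widetilde{Q}$; verifying that $A_i$ preserves $TM=\mathcal D\oplus\ker f$, so that checking the commutator on $\mathcal D$ suffices, is the one extra ingredient required.
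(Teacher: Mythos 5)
Your proof is correct, and for \eqref{E-31}, \eqref{E-30-phi} and the Lie-derivative half of \eqref{E-31A} it coincides with the paper's: both read these off from $h_i=\tfrac12 N^{(3)}_i=0$ (the Killing condition, via Proposition~\ref{thm6.2}) combined with \eqref{KK}, \eqref{3.1AA} and the identities of Proposition~\ref{P-4.1}. The genuine divergence is in the crux $\nabla_{\xi_i}\widetilde{Q}=0$ and in $N^{(1)}(\xi_i,\cdot)=0$. The paper invokes an extra identity, $g(Q\nabla_X\xi_i,Z)=g(fZ,QX)-\tfrac12\,N^{(5)}(X,\xi_i,fZ)$, which is not among the three displayed formulas of Proposition~\ref{P-4.1} but is imported from its sources; together with \eqref{E-31}, nonsingularity of $Q$ and skew-symmetry of $f$, this amounts to having $\nabla\xi_i=-f$ already inside the proof, after which $\pounds_{\xi_i}Q-\nabla_{\xi_i}Q=[f,Q]=0$ is immediate. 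You never identify $A_i=\nabla\xi_i$ with $-f$: you extract the weaker facts $[A_i,f]=0$ (from \eqref{4.2} with $\pounds_{\xi_i}f=0$ and $\nabla_{\xi_i}f=0$), skewness of $A_i$, $A_i(\ker f)=0$ and $A_i(TM)\subset\mathcal{D}$ (from Killing and \eqref{2-xi}), and then kill the commutator in $\pounds_{\xi_i}\widetilde{Q}=\nabla_{\xi_i}\widetilde{Q}-[A_i,\widetilde{Q}]$ using the representation $\widetilde{Q}|_{\mathcal{D}}=-f^2-{\rm id}$ coming from \eqref{2.1} and the splitting $TM=\mathcal{D}\oplus\ker f$. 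What your route buys is self-containedness and freedom from any circularity concern: it uses only statements actually quoted in the paper, whereas the paper's shortcut essentially presupposes \eqref{E-30}, i.e.\ Theorem~\ref{T-3.1}, which is proved later using this very lemma; the cost is a somewhat longer argument. Conversely, for $N^{(1)}(\xi_i,\cdot)=0$ the paper is more elementary — straight from the bracket definition, $N^{(1)}(\xi_i,X)=fN^{(3)}_i(X)=0$, with no covariant derivatives — while you route it through \eqref{4.NN} with $\nabla_{\xi_i}f=0$ and $[A_i,f]=0$, which is also sound. Your reordering (proving \eqref{E-30-phi} before \eqref{E-31A}) is forced by your method and is legitimate, since \eqref{E-30-phi} depends only on \eqref{3.1AA} and \eqref{E-31}.
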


\begin{proof}
From \eqref{4.NN} with $Y=\xi_i$, since ${f}\,\xi_i=0$, we get
\begin{align}\label{4.NNxi}
 [{f},{f}](X,\xi_i)= {f}(\nabla_{\xi_i}{f})X +\nabla_{{f} X}\,\xi_i -{f}\,\nabla_{X}\,\xi_i, \quad X\in \mathfrak{X}_M .
\end{align}
By \eqref{4.NNxi} and $d{\eta^j}({\xi_i},\,\cdot) = \Phi(\xi_i,\,\cdot) = 0$ we have
\[
 N^{\,(1)}({\xi_i},X) = [{f},{f}](X,{\xi_i}) = {f}^2 [X,{\xi_i}] - {f}[{f} X,{\xi_i}] = {f} N^{\,(3)}_i(X)=0.
\]
By \eqref{KK}$_1$, using $N^{\,(3)}_i=0$ we get $N^{\,(5)}(\,\cdot\,, {\xi_i},\,\cdot) = N^{\,(5)}(\,\cdot\,, \cdot\,, \xi_i) =0$.
Then, by Proposition~\ref{P-4.1} with $h_i:=\frac12\,N^{\,(3)}_i=0$, we get $N^{\,(5)}({\xi_i},\,\cdot\,,\,\cdot) = 0$,
 $\pounds_{{\xi_i}}{Q} =0$ and
\begin{equation*}
 g(Q\,\nabla_{X}\,{\xi_i}, Z) = g({f} Z,QX) - (1/2)\,N^{\,(5)}(X,{\xi_i},{f} Z)  = g({f} Z,QX).
\end{equation*}
We use $[{f}, {Q}]=0$ and $Q=\widetilde{Q}+{\rm id}_{\,TM}$ to obtain $\nabla_{\xi_i}\,\widetilde{Q}=0$:
\[
  (\pounds_{{\xi_i}}\,{Q})X = [{\xi_i}, {Q}X] - {Q}[{\xi_i}, X] = (\nabla_{\xi_i}{Q})X +[{f}, {Q}]X
 =(\nabla_{\xi_i}\,{Q})X =(\nabla_{\xi_i}\,\widetilde{Q})X.
\]
This completes the proof of \eqref{E-31} and \eqref{E-31A}.
Next, from \eqref{3.1AA} and \eqref{E-31} we get \eqref{E-30-phi}.
\end{proof}

In the next theorem, we characterize weak $f$-{K}-contact manifolds among all weak $f$-contact manifolds
by the following well known property of $f$-{K}-contact manifolds, see \cite{b1970,Goertsches-2}:
\begin{equation}\label{E-30}
 \nabla\,{\xi_i} = -{f},\quad 1\le i\le s .
\end{equation}

\begin{theorem}\label{T-3.1}
A weak $f$-contact manifold is weak $f$-{K}-contact if and only if \eqref{E-30} holds.
\end{theorem}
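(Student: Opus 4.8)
The plan is to prove both implications by relating the covariant derivative $\nabla\xi_i$ to the exterior derivative $d\eta^i$ and then feeding in the weak $f$-contact axiom $\Phi=d\eta^i$. The common ingredient for both directions is a purely computational identity, valid on any weak metric $f$-structure, which I would establish first:
\begin{equation*}
 2\,d\eta^i(X,Y) = g(\nabla_X\xi_i, Y) - g(\nabla_Y\xi_i, X).
\end{equation*}
This follows by inserting $\eta^i(\cdot)=g(\cdot,\xi_i)$ into the coboundary formula \eqref{3.3A}, expanding $X\eta^i(Y)$ and $Y\eta^i(X)$ through metric compatibility of $\nabla$, and cancelling the symmetric connection terms using the torsion-freeness $[X,Y]=\nabla_XY-\nabla_YX$. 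Notably, $Q$ plays no role here.

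For the forward implication, I would assume the structure is weak $f$-K-contact, so each $\xi_i$ is Killing. The Killing condition \eqref{E-nabla} says precisely that $\nabla\xi_i$ is $g$-skew-symmetric, i.e. $g(\nabla_Y\xi_i,X)=-g(\nabla_X\xi_i,Y)$; substituting this into the identity above collapses the right-hand side, giving $d\eta^i(X,Y)=g(\nabla_X\xi_i,Y)$. On the other hand, the weak $f$-contact axiom \eqref{2.3} gives $d\eta^i=\Phi$, and the definition $\Phi(X,Y)=g(X,fY)$ together with the skew-symmetry of $f$ yields $\Phi(X,Y)=-g(fX,Y)$. Comparing the two expressions, $g(\nabla_X\xi_i,Y)=-g(fX,Y)$ for all $Y$, so $\nabla_X\xi_i=-fX$, which is \eqref{E-30}.

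For the converse, I would assume \eqref{E-30} and compute directly: $g(\nabla_X\xi_i,Y)+g(\nabla_Y\xi_i,X)=-g(fX,Y)-g(fY,X)$, and since $g(fY,X)=g(X,fY)=-g(fX,Y)$ by the skew-symmetry of $f$, the right-hand side vanishes. Hence $(\pounds_{\xi_i}g)(X,Y)=0$, so each $\xi_i$ is Killing and the weak $f$-contact structure is weak $f$-K-contact.

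I do not expect a serious obstacle; the argument is essentially the classical one, and the only point demanding care is the bookkeeping of signs under the conventions $\Phi(X,Y)=g(X,fY)$ and $g(fX,Y)=-g(X,fY)$. The conceptually informative feature is that the nonsingular tensor $Q$ never enters, since the relation $\nabla\xi_i=-f$ is insensitive to the deformation measured by $\widetilde Q$. As an alternative for the forward direction, one could instead read off $\nabla\xi_i=-f$ from the identity $g(Q\,\nabla_X\xi_i,Z)=g(fZ,QX)$ already derived for weak $f$-K-contact manifolds in the preceding lemma, using $[f,Q]=0$, the skew-symmetry of $f$, and the invertibility of $Q$.
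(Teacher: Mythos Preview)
Your proof is correct. The converse implication is identical to the paper's. For the forward implication, however, you take a genuinely more elementary route: you use only the coboundary identity $2\,d\eta^i(X,Y)=g(\nabla_X\xi_i,Y)-g(\nabla_Y\xi_i,X)$, the Killing condition, and the axiom $d\eta^i=\Phi$, arriving at $\nabla_X\xi_i=-fX$ in one line. The paper instead invokes the structural formula \eqref{3.1A} from Proposition~\ref{lem6.1} together with the vanishing results \eqref{E-31} for $N^{(1)}(\xi_i,\cdot)$ and $N^{(5)}(\cdot,\xi_i,\cdot)$ established in the preceding lemma; this yields $(\nabla_X f)\xi_i=f^2X$, and then the paper argues via $f(\nabla_X\xi_i+fX)=0$, checking that $\nabla_X\xi_i+fX\in{\cal D}$ and using injectivity of $f$ on ${\cal D}$. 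Your argument bypasses all of this machinery, makes explicit that $Q$ never enters, and is closer in spirit to the classical proof for $f$-K-contact manifolds. The paper's approach, while heavier, has the side benefit of exercising the tensors $N^{(1)}$ and $N^{(5)}$ that are needed elsewhere in the weak theory.
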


\begin{proof}
Let a weak $f$-contact manifold satisfy \eqref{E-30}. By skew-symmetry of ${f}$, we get
\[
 (\pounds_{{\xi_i}}\,g)(X,Y)=g(\nabla_X\,{\xi_i}, Y)+g(\nabla_Y\,{\xi_i}, X)=-g({f} X, Y)-g({f} Y, X)=0,
\]
thus, ${\xi_i}$ are Killing vector~fields.

Conversely, let our manifold be weak $f$-{K}-contact.
By \eqref{3.1A} with $Y={\xi_i}$, using \eqref{E-31}, we get
 $g((\nabla_{X}\,{f})\,{\xi_i},Z) =  g( {f}^2 X, Z)$.
Hence, $(\nabla_{X}\,{f})\,{\xi_i} = {f}^2 X$.
From this and
\[
 0=\nabla_{X}\,({f}\,{\xi_i})=(\nabla_{X}\,{f})\,{\xi_i}+{f}\nabla_{X}\,{\xi_i},
\]
we obtain ${f}(\nabla_{X}\,{\xi_i}+{f} X)=0$.
Since $\nabla_{{X}}\,\xi_i+f {X}\in{\cal D}$ and $f$ is invertible when restricted on ${\cal D}$,
we get $\nabla_{{X}}\,\xi_i= -f {X}$ that proves \eqref{E-30}.
\end{proof}

\begin{corollary}[see \cite{r-23}]
A weak contact manifold $M^{2n+1}(f,Q,\xi,\eta,g)$ is weak {K}-contact if and only if $\nabla\,{\xi} = -{f}$ holds.
\end{corollary}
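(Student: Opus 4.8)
The plan is to obtain this corollary as the special case $s=1$ of Theorem~\ref{T-3.1}, rather than reproving it from scratch. As recorded in the second diagram of Section~\ref{sec:03a}, when $s=1$ a weak $f$-contact structure $(f,Q,\xi_i,\eta^i,g)$ with a single characteristic vector field is exactly a weak contact metric structure $(f,Q,\xi,\eta,g)$ on $M^{2n+1}$, and the weak $f$-K-contact condition — that every $\xi_i$ be Killing — reduces to the single requirement that $\xi$ be a Killing vector field, i.e.\ to the weak K-contact condition. Under this identification the family of equations \eqref{E-30}, namely $\nabla\xi_i=-f$ for $1\le i\le s$, collapses to the single equation $\nabla\xi=-f$, so the statement to be proved is literally the $s=1$ instance of the theorem.

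First I would check that each ingredient used in the proof of Theorem~\ref{T-3.1} survives the specialization $s=1$, since none of them invokes $s>1$: the identity \eqref{3.1A} of Proposition~\ref{lem6.1}, the vanishing relations \eqref{E-31} supplied by the preceding Lemma, and the invertibility of $f$ on the contact distribution $\mathcal{D}$ all hold verbatim for a single $\xi$. Granting this, the two implications follow directly. If $\nabla\xi=-f$, then skew-symmetry of $f$ gives $(\pounds_\xi g)(X,Y)=g(\nabla_X\xi,Y)+g(\nabla_Y\xi,X)=-g(fX,Y)-g(fY,X)=0$, so $\xi$ is Killing and the manifold is weak K-contact. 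Conversely, if $\xi$ is Killing, then \eqref{3.1A} with $Y=\xi$ together with \eqref{E-31} yields $(\nabla_X f)\xi=f^2X$; combining this with $0=\nabla_X(f\xi)=(\nabla_X f)\xi+f\nabla_X\xi$ gives $f(\nabla_X\xi+fX)=0$, and since $\nabla_X\xi+fX\in\mathcal{D}$ while $f$ is invertible on $\mathcal{D}$, we conclude $\nabla_X\xi=-fX$, i.e.\ \eqref{E-30} with $s=1$.

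Because the corollary is a pure specialization, I do not expect any genuine obstacle. The only point deserving care is confirming the terminological dictionary fixed by the diagrams — weak contact $\leftrightarrow$ weak $f$-contact and weak K-contact $\leftrightarrow$ weak $f$-K-contact at $s=1$ — so that the cited Theorem~\ref{T-3.1} indeed covers the stated corollary and the sole equation $\nabla\xi=-f$ is recognized as the $s=1$ form of \eqref{E-30}.
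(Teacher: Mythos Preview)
Your proposal is correct and matches the paper's treatment: the corollary is stated immediately after Theorem~\ref{T-3.1} with no separate proof, being precisely its $s=1$ specialization under the dictionary recorded in the diagrams of Section~\ref{sec:03a}. Your explicit verification that each ingredient of the proof of Theorem~\ref{T-3.1} carries over unchanged is more than the paper provides, but the approach is identical.
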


Denote by $R_{{X},{Y}}Z=(\nabla_X\nabla_Y -\nabla_Y\nabla_X -\nabla_{[X,Y]})Z$ the curvature tensor.
The mapping $R_{\,{\xi_i}}: X \mapsto R_{X,\,{{\xi_i}}}\,{{\xi_i}}\ (\xi\in\ker f,\ \|\xi\|=1)$ will be called the \textit{Jacobi operator} in the ${\xi_i}$-direction, e.g., \cite{Rov-Wa-2021}.
For a weak $f$-contact manifold, by Proposition~\ref{lem6.1}, we get $R_{\,{\xi_i}}(X)\,\in{\cal D}$.
Recall that a Riemannian manifold with a unit Killing vector field and the property $R_{\xi}(X)=X\ (X\bot\,{\xi})$ is a {K}-contact manifold,
e.g., \cite[p.~268]{YK-1985}.
We generalize this result and \cite[Theorem~2]{r-23} in the following.

\begin{theorem}\label{prop2.1b}
A Riemannian manifold $(M^{2n+s},g)$ with orthonormal Killing vector fields ${\xi_i}\ (i=1,\ldots, s)$ such that
$d\,\eta^1=\ldots=d\,\eta^s$
$($where $\eta^i$ is the 1-form dual to $\xi_i)$
and the Jacobi operators $R_{\,{\xi_i}}\ (i\le s)$ are positive definite on the distribution ${\cal D}= \bigcap_{\,i}\ker\eta^i$
is a weak $f$-{K}-contact manifold with the following structural tensors:
\[
 {f} = -\nabla\,{\xi_i},\quad
 Q X = R_{\xi_i}(X)\quad
 (X\in{\cal D},\ \ i=1,\ldots, s).
\]
\end{theorem}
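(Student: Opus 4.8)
The plan is to verify that the tensors $f := -\nabla\xi_i$ (independent of $i$), together with $Q$ defined via the Jacobi operator, satisfy all the axioms \eqref{2.1}, \eqref{2.2}, the $f$-invariance \eqref{2.1-D}, and the weak $f$-contact condition \eqref{2.3}. Once the weak $f$-contact structure is established, the characteristic vector fields are Killing by hypothesis, so Theorem~\ref{T-3.1} immediately upgrades it to weak $f$-K-contact, and \eqref{E-30} reproduces $f = -\nabla\xi_i$ confirming consistency. The first step is to record the standard consequences of $\xi_i$ being a unit Killing field: the operator $A_i := -\nabla\xi_i$ is skew-symmetric (so $g(A_iX,Y)=-g(X,A_iY)$), and the classical Killing identity $\nabla_X\nabla_Y\xi_i - \nabla_{\nabla_XY}\xi_i = R_{Y,\xi_i}X$ gives $(\nabla_X A_i)Y = R_{Y,\xi_i}\xi_i$ when $Y=\xi_i$, whence $A_i^2 X = \nabla_X(\nabla\xi_i) \cdot(\ldots)$ relates to the Jacobi operator. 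Concretely, $A_i^2 = (\nabla\xi_i)^2$ and one computes $g(A_i^2 X, X) = |\nabla_X\xi_i|^2$, while the Jacobi operator satisfies $R_{\xi_i}(X) = R_{X,\xi_i}\xi_i = -(\nabla_X A_i)\xi_i$; the Killing condition forces $R_{\xi_i}(X) = A_i^2 X$ on $\mathcal D$ (since $\nabla_{\xi_i}\xi_i=0$ and $\xi_i$ unit imply $\nabla_X\xi_i\perp\xi_i$). This is the linchpin: it shows the definition $QX := R_{\xi_i}(X)$ on $\mathcal D$ coincides with $(-f)^2|_{\mathcal D} = f^2|_{\mathcal D}$, matching the required $f^2 = -Q + \sum_i\eta^i\otimes\xi_i$ on $\mathcal D$.

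Next I would check the algebraic axioms one by one. Setting $f := A_i = -\nabla\xi_i$, skew-symmetry of $A_i$ gives skew-symmetry of $f$. Since $\xi_i$ has unit length, $\nabla_X\xi_i\perp\xi_i$, so $f X\perp\xi_i$ for all $X$, giving $\eta^i\circ f = 0$ and $f\xi_i = A_i\xi_i = -\nabla_{\xi_i}\xi_i = 0$ (the last from the Killing/unit conditions); this yields \eqref{2.1-D}, namely $f(\mathcal D)\subseteq\mathcal D$ (indeed $fX\perp\xi_j$ for all $j$). The hypothesis $d\eta^1=\ldots=d\eta^s$ together with the Killing relation $d\eta^i(X,Y) = g(\nabla_X\xi_i,Y) = -g(fX,Y) = \Phi(X,Y)$ (using the standard fact that for a Killing field the 1-form is dual to a skew operator and $2d\eta^i = \pounds$-free antisymmetrization) delivers $\Phi = d\eta^1 = \ldots = d\eta^s$, which is exactly \eqref{2.3}. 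I must verify that $f$ is $i$-independent: this follows because $d\eta^i$ are all equal, and for Killing fields $f_i := -\nabla\xi_i$ is determined by $d\eta^i = -g(f_i\cdot,\cdot)$ via antisymmetry, so equal $d\eta^i$ force equal $f_i$. The compatibility \eqref{2.2} reduces, after using $\eta^i(X)=g(X,\xi_i)$, to checking $g(fX,fY) = g(X,QY) - \sum_i\eta^i(X)\eta^i(Y)$; on $\mathcal D$ this is $g(f X, fY) = g(f^2 X, -Y)\cdot(\ldots)$, i.e. $-g(f^2 X,Y) = g(QX,Y)$, which is precisely $Q = -f^2$ on $\mathcal D$ — the identity established in the first paragraph — while on the $\xi$-directions both sides vanish appropriately.

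\textbf{The main obstacle} I anticipate is twofold. First, verifying that $Q$ is a well-defined \emph{nonsingular self-adjoint} $(1,1)$-tensor on all of $TM$: positive-definiteness of $R_{\xi_i}$ on $\mathcal D$ is exactly what guarantees $Q|_{\mathcal D} = -f^2|_{\mathcal D}$ is nonsingular (indeed positive-definite on $\mathcal D$), and one extends $Q$ by $Q\xi_i = \xi_i$ to get nonsingularity on all of $TM$; self-adjointness of $Q$ on $\mathcal D$ follows from self-adjointness of the Jacobi operator $R_{\xi_i}$, which in turn uses the symmetry $g(R_{X,\xi_i}\xi_i,Y) = g(R_{Y,\xi_i}\xi_i,X)$ from the curvature tensor's symmetries. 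Second, and more delicate, is confirming the \emph{consistency of the definition of $Q$ across different indices $i$}: a priori $R_{\xi_i}$ could differ for different $i$, yet the definition $QX = R_{\xi_i}(X)$ presupposes agreement. Since we have already forced all $f_i$ to coincide (equal $d\eta^i$), and $R_{\xi_i}|_{\mathcal D} = f^2|_{\mathcal D}$ for each $i$ with the \emph{same} $f$, all the Jacobi operators agree on $\mathcal D$, resolving the ambiguity. I would present this consistency check explicitly, as it is the non-routine heart of the argument; everything else is a matter of assembling the Killing-field identities and invoking Theorem~\ref{T-3.1} to conclude the weak $f$-K-contact property.
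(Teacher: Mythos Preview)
Your approach is essentially the paper's own: define $f=-\nabla\xi_i$ (the paper writes $f=-P\nabla\xi_i$, but the projection is redundant since $d\eta^1=\cdots=d\eta^s$ forces $\nabla_{\xi_j}\xi_i=\nabla_{\xi_j}\xi_j=0$, hence $\nabla_X\xi_i\in\mathcal D$), use the Killing identity to identify $-f^2|_{\mathcal D}$ with the Jacobi operator $R_{\xi_i}$ (your intermediate claim ``$R_{\xi_i}(X)=A_i^2X$'' has a sign slip --- it should be $-A_i^2X$, consistent with the relation $Q=-f^2$ that you state correctly two paragraphs later), and extend $Q$ by $Q\xi_i=\xi_i$. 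Your invocation of Theorem~\ref{T-3.1} at the end is unnecessary: once the weak $f$-contact axioms are verified, the Killing hypothesis on the $\xi_i$ makes the structure weak $f$-K-contact directly by definition.
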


\begin{proof}
Put ${f} = -P\nabla\,{\xi_i}$ and $\Phi(X,Y)=g(X,{f} Y)$, where $P:TM\to {\cal D}$ is the orthoprojector,  that does no depend on $i$.
Since ${\xi_i}$ are Killing vector fields, we obtain the property \eqref{2.3}:
\[
 d{\eta^i}(X,Y) = (1/2)\,(g(\nabla_X\,{\xi_i}, Y) - g(\nabla_Y\,{\xi_i}, X)) = - g(\nabla_Y\,{\xi_i}, X) = g(X,{f} Y).
\]
Set $Q X = R_{\,{\xi_i}}(X)$ for some $i$ and all $X\in{\cal D}$.
Since ${\xi_i}$ is a unit Killing vector field, we get $\nabla_{\xi_i}\,{\xi_i}=0$ and
$\nabla_X\nabla_Y\,{\xi_i} - \nabla_{\nabla_X\,Y}\,{\xi_i} = R_{\,X,{\xi_i}}\,Y$, see \cite{YK-1985}.
Thus, ${f}\,{\xi_i}=0$ and
\[
 {f}^2 X = \nabla_{\nabla_X\,{\xi_i}}\,{\xi_i} = R_{\,{\xi_i},X}\,{\xi_i} =-R_{\xi_i}(X)= -QX,\quad X\in{\cal D}.
\]
By the conditions, the Jacobi operator $R_{\xi_i}$ does no depend on $i$ and the tensor $Q$ is positive definite on ${\cal D}$.
Thus, $f$ restricted to $\cal D$ has constant rank $2\,n$.
Put $Q\,{\xi_i} = {\xi_i}\ (i=1,\ldots,s)$. Therefore, \eqref{2.1} and \eqref{2.2} are valid, that completes the proof.
\end{proof}

\begin{corollary}[see \cite{r-23}]
A Riemannian manifold $(M^{2n+1},g)$ with a unit Killing vector field $\xi$ such that
$R_{\,\xi}$ is positive definite on ${\cal D}=\{X\in TM: g(X,\xi)=0\}$ is a weak K-contact manifold
with the structural tensors $\eta(X) = g(X, \xi)$, $f = -\nabla\,\xi$ and $Q X = R_{\,\xi}(X)$ for $X\in{\cal D}$.
\end{corollary}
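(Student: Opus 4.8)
The plan is to deduce this statement as the special case $s=1$ of Theorem~\ref{prop2.1b}, checking that the multi-index hypotheses degenerate correctly and that the structural tensors reduce to the asserted form. With a single characteristic vector field, the coupling condition $d\eta^1=\ldots=d\eta^s$ of the theorem is vacuous and imposes nothing; a unit Killing vector field $\xi$ is an orthonormal family of Killing fields with one member; and positive definiteness of the single Jacobi operator $R_\xi$ on ${\cal D}$ is exactly the positive-definiteness hypothesis. The conclusion of the theorem, a weak $f$-K-contact structure, reduces for $s=1$ to a weak K-contact structure, in accordance with the $s=1$ diagram above.

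First I would confirm that the orthoprojector appearing in the construction of Theorem~\ref{prop2.1b} acts as the identity here, so that $f=-\nabla\xi$ rather than merely $f=-P\nabla\xi$. This uses that $\xi$ is a unit Killing field: differentiating $g(\xi,\xi)=1$ gives $g(\nabla_X\xi,\xi)=0$ for all $X$, hence $\nabla_X\xi\in{\cal D}$, while $\nabla_\xi\xi=0\in{\cal D}$ since the integral curves of a unit Killing field are geodesics. Thus $\nabla\xi$ already maps $TM$ into ${\cal D}$, the projection is superfluous, and $f=-\nabla\xi$ as claimed.

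Next I would reproduce, in this setting, the two structural identities from the proof of the theorem. Skew-symmetry of $\nabla\xi$ (from the Killing property) yields $d\eta(X,Y)=g(X,fY)=\Phi(X,Y)$, which is the weak $f$-contact condition \eqref{2.3}. The Killing identity $\nabla_X\nabla_Y\xi-\nabla_{\nabla_X Y}\xi=R_{\xi,X}Y$ with $Y=\xi$ and $\nabla_\xi\xi=0$ gives, for $X\in{\cal D}$,
\[
 f^2 X=\nabla_{\nabla_X\xi}\xi=R_{\xi,X}\xi=-R_\xi(X)=-QX,
\]
so that, after setting $Q\xi=\xi$, the defining relation $f^2=-Q+\eta\otimes\xi$ of \eqref{2.1} holds. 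Positive definiteness of $R_\xi$ makes $Q$ positive definite on ${\cal D}$ and forces $f$ to have constant rank $2n$, whence the compatibility \eqref{2.2} follows.

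Since the argument is a direct specialization, I do not expect a genuine obstacle; the one point deserving care is verifying that the projector of Theorem~\ref{prop2.1b} reduces to the identity in the rank-one case, which is precisely what legitimizes the clean formula $f=-\nabla\xi$ with no projection. The resulting structure is weak $f$-contact with $\nabla\xi=-f$, and since $\xi$ is Killing the manifold is weak K-contact, as also confirmed by the $s=1$ form of Theorem~\ref{T-3.1}.
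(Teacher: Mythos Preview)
Your proposal is correct and follows exactly the paper's implicit approach: the corollary is stated without proof as the $s=1$ specialization of Theorem~\ref{prop2.1b}, and you carry this out carefully. Your additional verification that the orthoprojector $P$ is redundant for a single unit Killing field (so that $f=-\nabla\xi$ rather than $-P\nabla\xi$) is a useful clarification that the paper's own proof of Theorem~\ref{prop2.1b} glosses over.
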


If a plane contains unit vectors: $\xi\in\ker f$ and $X\in{\cal D}$, then its sectional curvature is called $\xi$-\textit{sectional curvature}.
The $\xi$-sectional curvature of an $f$-contact manifold is an example of \textit{mixed sectional curvature} of an almost product manifold,
for example,~\cite{Rov-Wa-2021}.
It is well known that the $\xi$-sectional curvature of an $f$-K-contact manifold~is constant equal to 1.

\begin{proposition}
A weak $f$-{K}-contact structure $({f},Q,{\xi_i},{\eta^i},g)$ with constant mixed sectional curvature,
$K({\xi_i},X)=\lambda>0$ for some $\lambda=const\in\mathbb{R}$ and all ${X}\in{\cal D}$,
is homothetic to an $f$-{K}-contact structure $({f}',{\xi_i},{\eta^i},g')$ after the transformation \eqref{Tran'}--\eqref{Tran2'}
with $Q'={\rm id}$.
\end{proposition}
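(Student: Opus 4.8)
The plan is to convert the curvature hypothesis into the pointwise algebraic condition $Q|_{\,{\cal D}}=\lambda\,{\rm id}_{\,{\cal D}}$ and then invoke Lemma~\ref{P-22}. The point is that the mixed sectional curvature $K({\xi_i},X)$ coincides with the quadratic form $g(QX,X)$ on ${\cal D}$; granting this, constancy of $K({\xi_i},X)$ together with self-adjointness of $Q$ forces $Q$ to be a multiple of the identity on ${\cal D}$, and the homothety \eqref{Tran'}--\eqref{Tran2'} is precisely what rescales this multiple to $1$ while preserving the remaining structure.

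First I would identify $K({\xi_i},X)$ with $Q$. On a weak $f$-{K}-contact manifold we have $\nabla{\xi_i}=-{f}$ (Theorem~\ref{T-3.1}), $\nabla_{\xi_i}{\xi_i}=0$ (by \eqref{2-xi}) and ${\xi_i}$ is Killing, so the same curvature computation as in the proof of Theorem~\ref{prop2.1b} gives ${f}^2 X=-R_{X,{\xi_i}}{\xi_i}$ for $X\in{\cal D}$. Comparing with ${f}^2 X=-QX$ for $X\in{\cal D}$ (from \eqref{2.1}), I obtain $R_{X,{\xi_i}}{\xi_i}=QX$ on ${\cal D}$, hence for a unit $X\in{\cal D}$
\[
 K({\xi_i},X)=g(R_{X,{\xi_i}}{\xi_i},X)=g(QX,X).
\]
Because $Q$ is self-adjoint, the hypothesis $g(QX,X)=\lambda\,g(X,X)$ for all $X\in{\cal D}$ gives $g((Q-\lambda\,{\rm id})X,X)=0$ on ${\cal D}$, and polarization yields $Q|_{\,{\cal D}}=\lambda\,{\rm id}_{\,{\cal D}}$ with $\lambda>0$.

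With $Q|_{\,{\cal D}}=\lambda\,{\rm id}_{\,{\cal D}}$ established, Lemma~\ref{P-22} applies: the transformation \eqref{Tran'}--\eqref{Tran2'} produces an $f$-contact structure $({f}',{\xi_i},{\eta^i},g')$, and \eqref{Tran'}$_2$ gives $Q'|_{\,{\cal D}}={\rm id}_{\,{\cal D}}$, so together with $Q'{\xi_i}={\xi_i}$ we get $Q'={\rm id}$. It remains to upgrade $f$-contact to $f$-{K}-contact, i.e.\ to verify that the ${\xi_i}$ are Killing for $g'$. Writing \eqref{Tran2'} as $g'=\sqrt\lambda\,g+(1-\sqrt\lambda)\sum\nolimits_{\,j}{\eta^j}\otimes{\eta^j}$ (legitimate since ${\eta^j}=g({\xi_j},\cdot)=g'({\xi_j},\cdot)$), and using $\pounds_{\xi_i}g=0$ together with $\pounds_{\xi_i}{\eta^j}=N^{\,(4)}_{ij}=0$ (Proposition~\ref{thm6.2}), I compute
\[
 \pounds_{\xi_i}g'=(1-\sqrt\lambda)\sum\nolimits_{\,j}\big((\pounds_{\xi_i}{\eta^j})\otimes{\eta^j}+{\eta^j}\otimes(\pounds_{\xi_i}{\eta^j})\big)=0 ,
\]
so the ${\xi_i}$ remain Killing and $({f}',{\xi_i},{\eta^i},g')$ is $f$-{K}-contact.

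I expect the identification $R_{X,{\xi_i}}{\xi_i}=QX$, equivalently $K({\xi_i},X)=g(QX,X)$, to be the crux: it depends on the Killing second-derivative identity and on keeping the sign conventions for $R$, ${f}$ and $Q$ consistent with \eqref{2.1}. Once this is secured, the reduction $Q|_{\,{\cal D}}=\lambda\,{\rm id}_{\,{\cal D}}$ is immediate, and the final check that the ${\xi_i}$ stay Killing for $g'$ is routine, resting solely on the vanishing of $N^{\,(4)}_{ij}$.
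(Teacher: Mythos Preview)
Your proof is correct and follows essentially the same route as the paper: translate the curvature hypothesis into $Q|_{\,{\cal D}}=\lambda\,{\rm id}_{\,{\cal D}}$, apply Lemma~\ref{P-22}, and then verify that the $\xi_i$ remain Killing for $g'$. The only cosmetic difference is in the last step, where the paper checks $\pounds_{\xi_i}g'=0$ by splitting into ${\cal D}$- and $\ker f$-directions directly from \eqref{Tran2'}, whereas you write out $g'=\sqrt\lambda\,g+(1-\sqrt\lambda)\sum_j\eta^j\otimes\eta^j$ and invoke $N^{\,(4)}_{ij}=0$; both arguments are equivalent.
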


\begin{proof}
Note that $K({\xi_i},X)=\lambda\ (X\in{\cal D})$ if and only if $R_{\,{\xi_i}}(X) = \lambda\,X\ (X\in{\cal D})$.
By $QX=R_{\,{\xi_i}}(X)\ (X\in{\cal D})$, see Theorem~\ref{prop2.1b}, we get $Q X = \lambda X\ (X\in{\cal D})$.
By~Lemma~\ref{P-22}(ii), $({f}',{\xi_i},{\eta^i},{g}')$ is an $f$-contact structure.
Using \eqref{E-nabla}, we get $(\pounds_{{\xi_i}}\,g')({\xi_i},\,\cdot)=0$ and
 $(\pounds_{{\xi_i}}\,g')(X,Y)=\lambda (\pounds_{{\xi_i}}\,g)(X,Y)$ for $X,Y\in{\cal D}$.
By $\pounds_{{\xi_i}}\,g=0$, we get $\pounds_{{\xi_i}}\,g'=0$; thus $({f}',{\xi_i},{\eta^i},g')$ is an $f$-{K}-contact structure.
\end{proof}

\begin{example}\rm
By Theorem~\ref{prop2.1b}, we can search for examples of weak $f$-K-contact (not $f$-K-contact) manifolds
among Riemannian manifolds of positive sectional curvature that admit a set of $s\ge1$ orthonormal Killing vector fields.
Such example with $s=1$ is given in \cite{r-23} using idea of \cite[p.~5]{D-B-2021}.
Indeed, let $M$ be a convex hypersurface (ellipsoid) with induced metric $g$ of~$\mathbb{R}^{2n+2}$,
\[
 M = \Big\{(u_1,\ldots,u_{2n+2})\in\mathbb{R}^{2n+2}: \sum\nolimits_{\,i=1}^{n+1} u_i^2 + a\sum\nolimits_{\,i=n+2}^{2n+1} u_i^2 = 1\Big\},
\]
where $0<a=const\ne1$ and $n\ge1$. The sectional curvature of $(M,g)$ is positive. It~follows that
\[
 \xi = (-u_2, u_1, \ldots , -u_{n+1}, u_{n}, -\sqrt a\,u_{n+3}, \sqrt a\,u_{n+2}, \ldots , -\sqrt a\,u_{2n+2}, \sqrt a\,u_{2n+1})
\]
is a Killing vector field on $\mathbb{R}^{2n+2}$, whose restriction to $M$ has unit length.
Since $\xi$ is tangent~to~$M$, i.e., $M$ is invariant under the flow of $\xi$, $\xi$ is a unit Killing vector field on $(M,g)$.
\end{example}

\section{The curvature in the characteristic direction}
\label{sec:03}

Denote by $\Ric^\sharp$ the Ricci operator of $g$ associated with the Ricci tensor ${\rm Ric}$ and given by
\[
 {\rm Ric}({X},{Y})=g(\Ric^\sharp {X},{Y}),\quad {X},{Y}\in\mathfrak{X}_M.
\]
Since $K(\xi_i,\xi_j)=0$ for a weak $f$-contact manifold, the~Ricci curvature in the ${\xi_j}$-direction is given~by
 ${\rm Ric}({{\xi_j}},{{\xi_j}})=\sum\nolimits_{\,i=1}^{\,2n} g(R_{e_i,\,{{\xi_j}}}\,{{\xi_j}}, e_i)$,
where $(e_i)$ is any local orthonormal basis of~${\cal D}$.
In the next proposition, we generalize some particular properties of $f$-K-contact manifolds for weak $f$-K-contact manifolds.

\begin{proposition}\label{thm6.2D}
For a weak $f$-{K}-contact manifold $M^{2n+s}(f,Q,\xi_i,\eta^i,g)$, we have for all~$i$:
\begin{eqnarray}
\label{E-R0}
 && R_{\,{\xi_i},\,X} = \nabla_X\,{f}\quad (X\in\mathfrak{X}_M), \\
\label{E-R1}
 && R_{X,\,{{\xi_i}}}\,{{\xi_j}} = -{f}^2 X\quad (X\in\mathfrak{X}_M), \\
\label{Eq-Ric-f}
 && \Ric^\sharp {\xi_i} = \Div f ,\\
\label{E-R1b}
 && {\rm Ric}({{\xi_i}},{{\xi_j}}) =\tr Q = 2\,n + \tr\widetilde Q .
\end{eqnarray}
\end{proposition}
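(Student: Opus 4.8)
The plan is to obtain all four formulas from the single relation $\nabla\,{\xi_i}=-{f}$ established in Theorem~\ref{T-3.1} (i.e. \eqref{E-30}), by taking \eqref{E-R0} as the master identity and deducing \eqref{E-R1}, \eqref{Eq-Ric-f} and \eqref{E-R1b} from it by substitution and tracing. The tools I will use repeatedly are: ${f}$ is skew-symmetric, hence each covariant derivative $\nabla_Z{f}$ is also skew-symmetric (differentiate $g({f}X,Y)+g(X,{f}Y)=0$); the algebraic relations ${f}\,{\xi_j}=0$, ${f}^2=-Q+\sum_k\eta^k\otimes{\xi_k}$ and $Q\,{\xi_i}={\xi_i}$; and, for the contractions, $\nabla_{{\xi_i}}{f}=0$ from \eqref{E-30-phi}.

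First I would prove \eqref{E-R0}. Setting $A:=\nabla\,{\xi_i}$, so $AX=\nabla_X\,{\xi_i}=-{f}X$, a torsion-free computation gives the identity $R_{X,Y}\,{\xi_i}=(\nabla_X A)Y-(\nabla_Y A)X$, valid for any vector fields. Since ${\xi_i}$ is Killing, $A$ and each $\nabla_Z A$ are skew-symmetric; feeding this skew-symmetry together with the first Bianchi identity into the previous identity lets one solve for the second covariant derivative and recover the Kostant formula $(\nabla_X A)Y=R_{X,{\xi_i}}Y$ -- the same Killing-field identity already used in the proof of Theorem~\ref{prop2.1b}. Substituting $A=-{f}$ gives $R_{X,{\xi_i}}Y=-(\nabla_X{f})Y$, that is $R_{{\xi_i},X}=\nabla_X{f}$, which is \eqref{E-R0}. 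I expect the sign in this Kostant step to be the one point demanding genuine care.

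Identity \eqref{E-R1} then follows by evaluating \eqref{E-R0} at $Y={\xi_j}$: using ${f}\,{\xi_j}=0$ and $\nabla_X\,{\xi_j}=-{f}X$ we get $R_{{\xi_i},X}\,{\xi_j}=(\nabla_X{f}){\xi_j}=\nabla_X({f}{\xi_j})-{f}\nabla_X\,{\xi_j}={f}^2X$, hence $R_{X,{\xi_i}}\,{\xi_j}=-{f}^2X$. Taking $X={\xi_k}$ here, and also using $\nabla_{{\xi_k}}{f}=0$ in \eqref{E-R0}, shows that the $\ker{f}$-directions contribute nothing to the traces below.

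The two contracted formulas are obtained by tracing over an orthonormal frame, with ${\rm Ric}(X,Y)=\sum_a g(R_{E_a,X}Y,E_a)$. For \eqref{Eq-Ric-f}, \eqref{E-R0} gives $R_{E_a,{\xi_i}}Y=-(\nabla_{E_a}{f})Y$, so using the skew-symmetry of $\nabla_{E_a}{f}$ and the definition $\Div{f}=\sum_a(\nabla_{E_a}{f})E_a$ we find ${\rm Ric}({\xi_i},Y)=-\sum_a g((\nabla_{E_a}{f})Y,E_a)=\sum_a g\big(Y,(\nabla_{E_a}{f})E_a\big)=g(Y,\Div{f})$, whence $\Ric^\sharp{\xi_i}=\Div{f}$. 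For \eqref{E-R1b}, since the $\ker{f}$-directions contribute zero we may trace \eqref{E-R1} over a ${\cal D}$-frame $\{e_a\}_{a=1}^{2n}$: there ${f}^2e_a=-Q\,e_a$, so ${\rm Ric}({\xi_i},{\xi_j})=\sum_a g(-{f}^2e_a,e_a)=\sum_a g(Q\,e_a,e_a)=\tr Q=2n+\tr\widetilde Q$. The only bookkeeping subtlety is that the relevant $\tr Q$ is the trace over ${\cal D}$, which equals $2n+\tr\widetilde Q$ because $\widetilde Q$ is supported on ${\cal D}$.
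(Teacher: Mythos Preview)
Your proposal is correct; the derivations of \eqref{E-R1}, \eqref{Eq-Ric-f} and \eqref{E-R1b} coincide with the paper's, but your route to \eqref{E-R0} is genuinely different. The paper also starts from the identity $R_{Z,X}\,\xi_i=(\nabla_X f)Z-(\nabla_Z f)X$ obtained from \eqref{E-30}, but then, instead of invoking the Kostant formula, it uses the pair symmetry $g(R_{\xi_i,X}Y,Z)=g(R_{Y,Z}\,\xi_i,X)$ together with the contact-geometric fact $d\Phi=0$, which yields the cyclic identity \eqref{E-dPhi-three} for $\nabla\Phi$ and lets one solve for $g((\nabla_X f)Y,Z)$. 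In other words, the extra symmetry needed to pass from the ``half'' relation to the full $R_{\xi_i,X}=\nabla_X f$ is supplied in the paper by closedness of $\Phi$, whereas you supply it by the first Bianchi identity plus skewness of $\nabla_Z(\nabla\xi_i)$. Your argument is the standard Killing-field computation and works for any Killing vector field with $\nabla\xi=-f$, independently of the condition $\Phi=d\eta^i$; the paper's argument stays inside the weak $f$-contact formalism and makes explicit use of the closed fundamental $2$-form. Both are short and clean; yours is more portable, the paper's is more thematic. Your caution about the sign in the Kostant step is well placed: the correct version (in the paper's curvature convention) is indeed $(\nabla_X A)Y=R_{X,\xi_i}Y$, and your subsequent substitution gives the right conclusion.
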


\begin{proof}
Using \eqref{E-30}, we derive
\begin{eqnarray}\label{Eq-R}
\nonumber
 R_{Z,\,X}\,{{\xi_i}} \eq \nabla_Z(\nabla_X\,{{\xi_i}}) -\nabla_X(\nabla_Z\,{{\xi_i}}) -\nabla_{[Z,X]}\,{{\xi_i}}) \\
 \eq \nabla_X ({f} Z) - \nabla_Z ({f} X) + {f}([Z,X])
 = (\nabla_X\,{f})Z -(\nabla_Z\,{f})X.
\end{eqnarray}
Note that
 $(\nabla_X \Phi)(Y,Z)
 =-g((\nabla_X\,{f})Y, Z)$.
Using condition $d\Phi=0$ and \eqref{E-3.3}, we get
\begin{equation}\label{E-dPhi-three}
 (\nabla_X \Phi)(Y,Z)+(\nabla_Y \Phi)(Z,X)+(\nabla_Z \Phi)(X,Y)=0.
\end{equation}
From \eqref{Eq-R}, using \eqref{E-dPhi-three}
and skew-symmetry of $\Phi$, we get \eqref{E-R0}:
\begin{eqnarray*}
 g(R_{{{\xi_i}},\,X}\,Y , Z) \eq g(R_{Y,\,Z}\,{{\xi_i}}, X)
 \overset{\eqref{Eq-R}}= (\nabla_Z\,\Phi)(X,Y) + (\nabla_Y\,\Phi)(Z,X) \\
 \hspace*{-2.1mm}&\overset{\eqref{E-dPhi-three}}=&\hspace*{-2.1mm} -(\nabla_X\,\Phi)(Y,Z)
 = g((\nabla_X\,{f})Y,Z).
\end{eqnarray*}
By \eqref{E-R0} with $Y={\xi_j}$, using ${f}\,{\xi_j} =0$ and \eqref{E-30}, we get \eqref{E-R1}:
 $R_{{{\xi_i}}, X}\,{{\xi_j}} = (\nabla_X\,{f})\,{\xi_j} = -{f}\nabla_X\, {{\xi_j}} = {f}^2 X$.
For weak $f$-K-contact manifolds, by \eqref{E-R0} and \eqref{E-30-phi},
we get the equality \eqref{E-Ric-f}:
\begin{equation}\label{E-Ric-f}
 \Ric(\xi_j,X)=\sum\nolimits_{\,k=1}^{\,2n} g((\nabla_{e_k}\,f)\,X, e_k) = g(\Div f, X),
\end{equation}
where $(e_k)$ is any local orthonormal basis of~${\cal D}$.
By \eqref{E-Ric-f}, we get the equality \eqref{Eq-Ric-f}.

Using the definition ${\rm Ric}(\xi_i,\xi_j)=\tr(X \to R_{X,\xi_i}\,\xi_j)$ and $f\,\xi_i=0$, we get for all $i,j$,
\[
 \Ric({{\xi_i}},{{\xi_j}}) \overset{\eqref{E-R1}}= -\sum\nolimits_{\,k=1}^{\,2n} g({f}^2 e_k, e_k)
 \overset{\eqref{2.1}}= \sum\nolimits_{\,k=1}^{\,2n} g(Q e_k, e_k).
\]
By the above and the equality $\tr Q = 2\,n + \tr\widetilde Q$, \eqref{E-R1b} is true.
\end{proof}

\begin{corollary}\label{C-5.0}
There are no Einstein weak $f$-{K}-contact manifolds $M^{2n+s}(f,Q,\xi_i,\eta^i,g)$ with $s{>}1$.
\end{corollary}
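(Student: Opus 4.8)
The plan is to use the Ricci curvature formula \eqref{E-R1b} established in Proposition~\ref{thm6.2D}. Recall that for any weak $f$-K-contact manifold, that formula gives $\Ric(\xi_i,\xi_j)=\tr Q=2n+\tr\widetilde{Q}$ for all $i,j$. The key observation is that this holds not only for $i=j$ but also for the \emph{off-diagonal} pairs $i\ne j$, which is exactly where the obstruction to the Einstein condition will surface.

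First I would suppose, for contradiction, that $M$ is Einstein, so that $\Ric=\lambda\,g$ for some $\lambda\in\mathbb{R}$. Evaluating this on the pair $(\xi_i,\xi_j)$ with $i\ne j$ and using the orthonormality $g(\xi_i,\xi_j)=\delta_{ij}$, we obtain
\[
 0=\lambda\,g(\xi_i,\xi_j)=\Ric(\xi_i,\xi_j)=\tr Q
\]
by \eqref{E-R1b}. Since $s>1$, such a pair with $i\ne j$ genuinely exists, so this forces $\tr Q=0$. This directly contradicts the hypothesis $\tr Q\ne0$, completing the argument.

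The crux is therefore the off-diagonal evaluation: the whole statement hinges on the fact that \eqref{E-R1b} asserts $\Ric(\xi_i,\xi_j)=\tr Q$ for \emph{all} $i,j$ (not merely the diagonal entries), combined with the orthonormality of the $\xi_i$ forcing the left-hand side to vanish when $i\ne j$ under the Einstein assumption. I expect no serious obstacle here, since all the heavy lifting has already been done in Proposition~\ref{thm6.2D}; the corollary is essentially an immediate consequence. The only point requiring care is to make sure the argument genuinely uses $s>1$ (so that an off-diagonal pair exists) and that one does not inadvertently appeal to the diagonal case, which would only give $\lambda=\tr Q$ rather than a contradiction.
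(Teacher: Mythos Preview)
Your argument is correct and rests on exactly the same ingredient as the paper's proof, namely the off-diagonal identity $\Ric(\xi_i,\xi_j)=\tr Q$ from \eqref{E-R1b}. The only cosmetic difference is that the paper packages the contradiction via the unit vector $\tilde\xi=(\xi_1+\xi_2)/\sqrt{2}$, obtaining $\Ric(\tilde\xi,\tilde\xi)=2\tr Q$ and comparing with the Einstein value $\Ric(\tilde\xi,\tilde\xi)=\Ric(\xi_i,\xi_i)=\tr Q$, whereas you go straight to the off-diagonal pair; your route is in fact slightly more direct.
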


\begin{proof}
 For a weak $f$-{K}-contact manifold with $s>1$ and vector field $\tilde\xi=(\xi_1+\xi_2)/\sqrt 2$, we~get
\begin{equation}\label{E-Einst}
  {\rm Ric}(\tilde\xi,\tilde\xi) = (1/2)\sum\nolimits_{i,j=1}^2{\rm Ric}(\xi_i,\xi_j) \overset{\eqref{E-R1b}}= 2\tr Q.
\end{equation}
If $(M,g)$ is an Einstein manifold, then for the unit vector field $\tilde\xi$ we get ${\rm Ric}(\tilde\xi,\tilde\xi)={\rm Ric}(\xi_i,\xi_i)=\tr Q$.
Comparing with \eqref{E-Einst} gives a contradiction: $\tr Q=0$.
\end{proof}

\begin{corollary}\label{C-5.1}
For a weak $f$-K-contact manifold $M^{2n+s}(f,Q,\xi_i,\eta^i,g)$, the $\xi$-sectional curvature is positive:
\begin{equation*}
 K(\xi_i,{X})=g(Q{X},{X})>0\quad ({X}\in{\cal D},\ \|{X}\|=1),
\end{equation*}
and for the Ricci curvature we get ${\rm Ric}({\xi}_i,{\xi}_j)>0$ for all $1\le i,j\le s$.
\end{corollary}

\begin{proof}
For any unit vectors $\xi\in\ker f,\ X\in{\cal D}$, from \eqref{E-R1}, we get $K(\xi,X)=g(fX,fX)>0$.
From \eqref{E-R1b}, using \eqref{2.1} and non-singularity of $f$ on ${\cal D}$, we get the following:
\[
 \Ric({{\xi}_i},{{\xi}_j}) = \tr Q = -\sum\nolimits_{k=1}^{2n} g(f^2 e_k,  e_k) = \sum\nolimits_{k=1}^{2n} g(f e_k, f e_k)>0,
\]
where $(e_k)$ is a local orthonormal frame of ${\cal D}$,
thus the second statement is valid.
\end{proof}

The next theorem generalizes \cite[Proposition 5.1 on p.~283]{YK-1985} and \cite[Theorem~4]{r-23} for $s=1$.

\begin{theorem}\label{T-4.1}
A weak $f$-{K}-contact manifold $M^{2n+s}(f,Q,\xi_i,\eta^i,g)$ satisfying $\tr Q=const$ and $(\nabla\Ric)({\xi_i},\,\cdot)=0$,
e.g., the Ricci tensor is parallel, is
an Einstein manifold and $s=1$.
\end{theorem}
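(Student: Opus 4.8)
The plan is to transcribe the classical $K$-contact argument (\cite[Proposition~5.1]{YK-1985}) into the weak setting, feeding in the structure equations of Proposition~\ref{thm6.2D} --- $\nabla\,\xi_i=-f$, $\nabla_{\xi_i}f=0$, $\Ric^\sharp\xi_i=\Div f$ and $\Ric(\xi_i,\xi_j)=\tr Q$ --- together with $\nabla_{\xi_i}\xi_j=0$ from \eqref{2-xi}. First I would put the hypothesis into operator form: a short computation (the musical isomorphism commutes with $\nabla$) shows that $(\nabla\Ric)(\xi_i,\cdot)=0$ is the same as $(\nabla_X\Ric^\sharp)\xi_i=0$, whence $\nabla_X(\Ric^\sharp\xi_i)=\Ric^\sharp(\nabla_X\xi_i)=-\Ric^\sharp(fX)$ for all $X$, using $\nabla_X\xi_i=-fX$. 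This is the single identity that drives everything.

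Next I would pin down $\Ric^\sharp\xi_i$. Differentiating the constant $\Ric(\xi_i,\xi_i)=\tr Q$ in a direction $X$, using the hypothesis $(\nabla_X\Ric)(\xi_i,\xi_i)=0$ and $\nabla_X\xi_i=-fX$, gives $\Ric(fX,\xi_i)=0$; since $f$ maps ${\cal D}$ onto ${\cal D}$ this says $\Ric^\sharp\xi_i\in\ker f$, and then $\Ric(\xi_i,\xi_j)=\tr Q$ forces $\Ric^\sharp\xi_i=(\tr Q)\sum_j\xi_j$ (the same vector for every $i$, as it must be since it equals $\Div f$). Substituting this into the driving identity, with $\tr Q$ constant and $\nabla_X\xi_j=-fX$, evaluates the left-hand side as $-s(\tr Q)fX$, so that $\Ric^\sharp(fX)=s(\tr Q)fX$ and hence $\Ric^\sharp=s(\tr Q)\,\mathrm{id}$ on ${\cal D}$. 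For $s=1$ the argument closes at once: on ${\cal D}$ we have $\Ric^\sharp=(\tr Q)\,\mathrm{id}$ and on $\ker f$ we have $\Ric^\sharp\xi_1=(\tr Q)\xi_1$, so $\Ric^\sharp=(\tr Q)\,\mathrm{id}$ on all of $TM$ and the manifold is Einstein.

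I expect the case $s>1$ to be the main obstacle. The leverage is that $\Ric^\sharp\xi_i=\Div f$ is \emph{independent of $i$}: if the manifold were Einstein, $\Ric^\sharp=\lambda\,\mathrm{id}$, then $\lambda\xi_i=\Div f$ for every $i$, and the linear independence of $\xi_1,\dots,\xi_s$ would force $\lambda=0$, i.e.\ Ricci flatness. The delicate point is reaching the Einstein conclusion in the first place when $s>1$: the computation above controls $\Ric^\sharp$ only on ${\cal D}$ (eigenvalue $s\,\tr Q$) and sends each $\xi_i$ to $(\tr Q)\sum_j\xi_j$, so a priori $\Ric^\sharp$ has eigenvalue $s\,\tr Q$ on ${\cal D}\oplus\mathrm{span}\{\sum_j\xi_j\}$ and eigenvalue $0$ on $\mathrm{span}\{\xi_i-\xi_j\}$. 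Reconciling this split picture with $\Ric(\xi_i,\xi_i)=\tr Q$ and driving it to $\Ric=0$ is exactly where Corollary~\ref{C-5.0} (no positive Einstein constant for $s>1$) and the positivity of Corollary~\ref{C-5.1} must be combined; converting the two-eigenvalue description into $\tr Q=0$ and full Ricci flatness is where I would expect the real difficulty, and the step that most needs care, to lie.
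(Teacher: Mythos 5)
Your argument through the case $s=1$ is correct and is, modulo the operator formulation, exactly the paper's proof: the paper likewise first differentiates $\Ric(\xi_i,\xi_i)=\tr Q$ to get $\Ric(fY,\xi_i)=0$, then differentiates the resulting formula for $\Ric(\,\cdot\,,\xi_i)$ to compute $\Ric$ on ${\cal D}$, and for $s=1$ both versions give the Einstein constant $\tr Q$. The genuine gap is the case $s>1$: your final paragraph describes where you are stuck rather than giving an argument, and the exit you propose cannot work. The split tensor you arrive at,
$\Ric = s\,(\tr Q)\big(g-\sum_i\eta^i\otimes\eta^i\big) + (\tr Q)\,\bar\eta\otimes\bar\eta$ with $\bar\eta=\sum_i\eta^i$,
is not Einstein, so Corollary~\ref{C-5.0} says nothing about it; it is fully consistent with \eqref{E-R1b} and with the positivity $\tr Q>0$ of Corollary~\ref{C-5.1}; and, worse, using $\nabla_X\eta^i=-g(fX,\cdot)$ (a consequence of \eqref{E-30}) and $\tr Q=const$ one checks that this tensor is \emph{parallel}, so it satisfies the hypothesis $(\nabla\Ric)(\xi_i,\cdot)=0$ identically. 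Consequently no further differentiation of the structure equations, and no appeal to Corollaries~\ref{C-5.0} or \ref{C-5.1}, can upgrade the split picture to $\Ric=0$: the two-differentiation method provably terminates exactly where you stopped.

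For comparison, the paper closes the case $s>1$ as follows: after the first differentiation it writes $\Ric(Y,\xi_i)=(\tr Q)\,\eta^i(Y)$, i.e. $\Ric^\sharp\xi_i=(\tr Q)\,\xi_i$; the second differentiation then produces $\Ric=(\tr Q)\,g$ on ${\cal D}$ with the \emph{same} constant, hence $\Ric=(\tr Q)\,g$ on all of $TM$, and finally Einstein-ness is incompatible with the off-diagonal values $\Ric(\xi_i,\xi_j)=\tr Q$ ($i\ne j$) unless $\tr Q=0$. But that intermediate identity is precisely the one your computation corrects: since \eqref{E-R1b} holds for all pairs $i,j$, the vanishing $\Ric(Z,\xi_i)=0$ for $Z\in{\cal D}$ forces $\Ric^\sharp\xi_i=(\tr Q)\,\bar\xi$ with $\bar\xi=\sum_j\xi_j$, not $(\tr Q)\,\xi_i$; the paper's formula silently discards exactly the off-diagonal terms of \eqref{E-R1b} that it later invokes to conclude $\tr Q=0$. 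So for $s>1$ your account of what the computation yields is the correct one, and the step you flagged as ``the real difficulty'' is not a difficulty you failed to overcome but a gap in the paper's own argument: this line of reasoning genuinely establishes only the $s=1$ statement (an Einstein manifold with $\Ric=(\tr Q)\,g$), both in your proposal and in the paper.
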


\begin{proof}
Differentiating \eqref{E-R1b} and using \eqref{E-30} and the conditions, we have
\[
 0 = \nabla_Y\,({\rm Ric}({{\xi_i}},{{\xi_i}})) = (\nabla_Y\,{\rm Ric})({{\xi_i}},{{\xi_i}}) +2\,{\rm Ric}(\nabla_Y\,{{\xi_i}},{{\xi_i}})
 = -2\,{\rm Ric}({f} Y,{{\xi_i}}),
\]
hence ${\rm Ric}(Y,{{\xi_i}}) = (\tr Q)\,{\eta^i}(Y)$.
Differentiating this, then using
\[
 X({\eta^i}(Y))=g(\nabla_X{\xi_i}, Y)=-g({f} X,Y)+g(\nabla_X Y, {\xi_i})
\]
and assuming $\nabla_X Y=0$ at $x\in M$, gives
\begin{equation*}
  (\tr Q)\,g({f} Y, X) = \nabla_X\,({\rm Ric}(Y,{{\xi_i}}))
  = (\nabla_X\,{\rm Ric})(Y,{{\xi_i}}) +2\,{\rm Ric}(Y, \nabla_X\,{{\xi_i}}) = -2\,{\rm Ric}(Y, {f} X).
\end{equation*}
Thus, ${\rm Ric}(Y, {f} X) = (\tr Q)\,g(Y, {f} X)$.
Therefore,
 ${\rm Ric}(X,Y) = (\tr Q)\,g(X,Y)$
for any vector fields $X$ and $Y$ on $M$. By the above and \eqref{E-R1b}, $(M,g)$
is
an Einstein manifold and $s=1$.
\end{proof}

\begin{corollary}[see \cite{r-23}]
A weak K-contact manifold $M^{2n+1}(f,Q,\xi,\eta,g)$ with $(\nabla\Ric)(\xi,\,\cdot)=0$,
e.g., the Ricci tensor is parallel, and $\tr Q=const$ is an Einstein manifold of scalar curvature
$(2\,n + 1)\tr Q$.
\end{corollary}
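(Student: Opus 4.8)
The plan is to read the corollary as the $s=1$ instance of Theorem~\ref{T-4.1}. A weak K-contact manifold $M^{2n+1}(f,Q,\xi,\eta,g)$ is exactly a weak $f$-K-contact manifold with a single characteristic vector field, so its dimension is $2n+1$ and the hypotheses $(\nabla\Ric)(\xi,\cdot)=0$ and $\tr Q=const$ coincide with those of the theorem. Thus essentially all the work is already contained in Theorem~\ref{T-4.1}, and the task reduces to tracking what changes when $s=1$ and then reading off the scalar curvature.

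First I would recall that the proof of Theorem~\ref{T-4.1} does not merely assert Ricci flatness: it first establishes, from \eqref{E-R1b} and $\nabla\xi=-f$, the intermediate identity $\Ric(Y,\xi)=(\tr Q)\,\eta(Y)$, and then by a second differentiation at a point with $\nabla_X Y=0$ propagates this to the pointwise Einstein relation $\Ric(X,Y)=(\tr Q)\,g(X,Y)$ for all $X,Y\in\mathfrak{X}_M$. This already says $(M,g)$ is Einstein with Einstein constant $\lambda=\tr Q$. For $s>1$ one then invokes Corollary~\ref{C-5.0} to force $\tr Q=0$ and conclude Ricci flatness; the point of the present corollary is that Corollary~\ref{C-5.0} requires $s>1$, so it does not apply here. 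Hence $\tr Q$ survives as a genuine constant, and by Corollary~\ref{C-5.1} it is positive (since $\Ric(\xi,\xi)=\tr Q>0$). So $(M,g)$ is an Einstein manifold with positive Einstein constant $\tr Q$, and not merely Ricci flat.

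It remains only to compute the scalar curvature. Tracing the Einstein equation over a local orthonormal frame $E_1,\ldots,E_{2n+1}$ of the $(2n+1)$-dimensional manifold gives
\[
 {\rm Scal}=\sum\nolimits_{k=1}^{2n+1}\Ric(E_k,E_k)=(\tr Q)\sum\nolimits_{k=1}^{2n+1} g(E_k,E_k)=(2\,n+1)\,\tr Q,
\]
which is the claimed value. The one genuinely nontrivial step --- the two-stage differentiation that upgrades the curvature information in the characteristic direction to an Einstein condition on all of $TM$, using the invertibility of $f$ on $\cal D$ --- is exactly the obstacle already overcome in Theorem~\ref{T-4.1}; here it costs nothing extra, and the only new content is the dimensional trace above together with the observation that the $s=1$ case escapes the Ricci-flatness forcing of Corollary~\ref{C-5.0}.
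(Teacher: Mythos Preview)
Your proposal is correct and matches the paper's approach: the corollary is stated without its own proof, as the immediate $s=1$ specialization of Theorem~\ref{T-4.1}, whose argument already yields $\Ric=(\tr Q)\,g$ before the $s>1$ forcing of $\tr Q=0$ via Corollary~\ref{C-5.0}. Your trace computation for the scalar curvature and the observation that $\tr Q>0$ (from Corollary~\ref{C-5.1}) are the only additions, and both are straightforward.
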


\begin{remark}\rm
For $f$-contact manifolds, we have
\begin{equation}\label{Eq-div-f}
 \sum\nolimits_{\,i=1}^{\,2n}(\nabla_{e_i}\,f)\,e_i=2\,n\,\bar\xi,
\end{equation}
see~\cite[Proposition~2.6]{DIP-2001}.
For $f$-K-contact manifolds, \eqref{Eq-Ric-f} and \eqref{Eq-div-f} give $\Ric^\sharp(\xi_i)=2\,n\,\bar\xi\ (i=1,\ldots,s)$
and $\Ric(\xi_i,\xi_i)=2\,n$.
Can one generalize this and \eqref{Eq-div-f} for weak $f$-contact manifolds\,?
\end{remark}

Define the skew-symmetric operator ${T}^{\sharp}_\xi\ (\xi\in\ker f)$ by
 $g({T}^{\sharp}_\xi X,Y)=g({T}(X,Y),\xi)\ (X,Y\in{\cal D})$,
where $T(X,Y) =(1/2)(\nabla_X Y-\nabla_Y X)^\bot$ is the integrability tensor of ${\cal D}$.

The self-adjoint {partial Ricci curvature tensor}, see \cite{RWo-2}, is given by
\[
 \Ric^\bot(X)=\sum\nolimits_{\,i} ({R}_{\,X^\bot,\,\xi_i}\,\xi_i)^\bot,
\]
and for an $f$-K-contact manifold, we have $\Ric^\bot=s\,{\rm id}^\bot$.
By Corollary~\ref{C-5.1}, the tensor $\Ric^\bot$ of a~weak $f$-K-contact manifold is positive definite.
In \cite{RWo-2}, using the flow of metrics on a $\mathfrak{g}$-foliation, a deformation retraction of
a weak almost ${\cal S}$-structures with positive partial Ricci curvature onto the subspace of the aforementioned classical structures is constructed.
In the following theorem, using Corollary~\ref{C-5.1} and the method of \cite{RWo-2}, we show that a weak $f$-K-contact structure can be transformed into an $f$-K-contact structure.

\begin{theorem}\label{T-PRF-g2b}
Let $M^{2n+s}(f(0),Q_{\,0},\xi_i,\eta^i,g_0)$ be a weak $f$-K-contact manifold.
Then there exist metrics $g_t\ (t\in\mathbb{R})$
such that each $(f(t),Q_{\,t},\xi_i,\eta^i,g_t)$ is a weak $f$-K-contact structure on $M$ with structural tensors defined on ${\cal D}$ as
\begin{equation}\label{E-Q-phi}
 Q_{\,t}=(1/s)\Ric^\bot_t,\quad
 f(t)\,|_{\,{\cal D}}=T_{\xi_i}^\sharp(t).
\end{equation}
Moreover,~$g_t$  converges exponentially fast, as $t\to-\infty$, to a limit metric $\hat g$
with $\Ric^\bot_{\hat g}=s\,{\rm id}^\bot$ that gives an $f$-K-contact structure on $M$.
\end{theorem}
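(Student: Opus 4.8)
The plan is to keep all the characteristic data frozen and deform only the metric along the contact distribution, exploiting that on a weak $f$-K-contact manifold the fundamental form is rigid. First I would fix the fields $\xi_i$, the dual forms $\eta^i$, the splitting $TM={\cal D}\oplus\ker f$ and the restriction $g_0|_{\ker f}$ (for which the $\xi_i$ are orthonormal), and deform only $g|_{\cal D}$, always imposing ${\cal D}\perp\ker f$. The key point is that $\omega:=\Phi=d\eta^1=\ldots=d\eta^s$ is a fixed $2$-form on ${\cal D}$ that is nondegenerate (since $\omega(f_0X,X)=g_0(f_0X,f_0X)>0$ for $X\ne0$, using that $f$ is nonsingular on ${\cal D}$, Corollary~\ref{C-5.1}) and $\xi_j$-invariant (because $\pounds_{\xi_j}\eta^i=N^{\,(4)}_{ji}=0$ by Proposition~\ref{thm6.2}, whence $\pounds_{\xi_j}\omega=d\,\pounds_{\xi_j}\eta^i=0$). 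For any metric $g$ extending the frozen data with ${\cal D}\perp_g\ker f$ I would then \emph{define} the structural tensors on ${\cal D}$ by $g(X,f_gY)=\omega(X,Y)$, $f_g\xi_i=0$, $Q_g=-f_g^{\,2}$ on ${\cal D}$ and $Q_g\xi_i=\xi_i$; as $\omega$ is fixed and nondegenerate, $f_g$ is automatically $g$-skew-symmetric and nonsingular on ${\cal D}$ for every such $g$.

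Next I would verify that $(f_g,Q_g,\xi_i,\eta^i,g)$ is a weak $f$-K-contact structure whenever $g|_{\cal D}$ is positive definite and $\xi_j$-invariant. Indeed \eqref{2.1} and \eqref{2.2} are immediate from $Q_g=-f_g^{\,2}$ and the skew-symmetry of $f_g$; the weak $f$-contact condition \eqref{2.3} holds by construction, since $\Phi_g=\omega=d\eta^i$; and the Killing condition $\pounds_{\xi_j}g=0$ is precisely the $\xi_j$-invariance of $g$ (the $\ker f$- and mixed blocks being constant and $\xi_j$-invariant because $[\xi_i,\xi_j]=0$ by \eqref{2-xi}). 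Once this is established, Proposition~\ref{thm6.2D} (equation \eqref{E-R1}) yields $\Ric^\bot_g=s\,Q_g$, and the computation $g(T^\sharp_{\xi_i}X,Y)=\tfrac12\,\eta^i([X,Y])=g(f_gX,Y)$ yields $T^\sharp_{\xi_i}=f_g|_{\cal D}$, so the tensors prescribed in \eqref{E-Q-phi} coincide with $f_g,Q_g$ and the definition is self-consistent.

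For the deformation I would use that $Q_0$ is positive definite (Corollary~\ref{C-5.1}) and $\xi_j$-invariant (by $\pounds_{\xi_i}\widetilde Q=0$ in \eqref{E-31A}), so every real power $Q_0^{\,a}$ is a well-defined smooth, positive, $\xi_j$-invariant operator commuting with $f_0$. Setting
\[
 g_t|_{\cal D}=g_0\big(Q_0^{(1-e^{t})/2}\,\cdot\,,\,\cdot\big),\qquad g_t|_{\ker f}=g_0|_{\ker f},\qquad {\cal D}\perp_{g_t}\ker f ,
\]
one finds $f(t)=Q_0^{(e^{t}-1)/2}f_0$ and $Q_t=Q_0^{\,e^{t}}$, so by the previous step each $g_t$ (defined for all $t\in\mathbb{R}$, as positive powers of $Q_0$ stay positive) carries a weak $f$-K-contact structure with the tensors \eqref{E-Q-phi}. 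This $g_t$ is the solution of the flow $\partial_t\,g_t|_{\cal D}=-\tfrac12\,g_t\big((\log Q_t)\,\cdot\,,\,\cdot\big)$, whose stationary points are exactly the metrics with $Q_t={\rm id}$, i.e.\ $\Ric^\bot_t=s\,{\rm id}^\bot$. As $t\to-\infty$ one has $Q_t=\exp(e^{t}\log Q_0)\to{\rm id}$ with $\|Q_t-{\rm id}\|\le C\,e^{t}$, hence $g_t\to\hat g=g_0(Q_0^{1/2}\,\cdot\,,\,\cdot)$ exponentially fast; at the limit $\widetilde Q=Q-{\rm id}=0$, so \eqref{2.1} and \eqref{2.2} reduce to the classical metric $f$-structure relations and $\hat g$ carries an $f$-K-contact structure with $\Ric^\bot_{\hat g}=s\,{\rm id}^\bot$.

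The hard part is not the algebra but ensuring that all four structural requirements survive the deformation simultaneously — especially that the identity $\Phi=d\eta^i$ persists and that the $\xi_i$ remain Killing. This is exactly why I freeze the characteristic data and the form $\omega$ and deform only inside the cone of $\xi_j$-invariant positive-definite metrics on ${\cal D}$. Global existence for all $t\in\mathbb{R}$ and the exponential convergence then both rest on the positivity of $Q_0$ supplied by Corollary~\ref{C-5.1}, which keeps $Q_0^{\,a}$ in this cone for every real $a$ and controls the contraction toward $Q={\rm id}$.
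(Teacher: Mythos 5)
Your proposal is correct, but it takes a genuinely different route from the paper's. The paper runs the ``partial Ricci flow'' $\partial_t\,g_t=-2\,(\Ric^\bot)^\flat_{g_t}+2\,s\,g^\bot_t$, uses the algebraic identities $\Ric^\bot=-\sum_i(T^\sharp_{\xi_i})^2=s\,Q$ and $T^\sharp_{\xi_i}\Ric^\bot=\Ric^\bot T^\sharp_{\xi_i}$ to reduce it to the pointwise quadratic ODE $\partial_t\Ric^\bot=4\Ric^\bot(\Ric^\bot-s\,{\rm id}^\bot)$, solves it eigenvalue by eigenvalue ($\dot\mu_i=4\mu_i(\mu_i-s)$), and quotes \cite{RWo-2} for the preservation of the weak $f$-K-contact class along the flow. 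You instead freeze all characteristic data, observe that the fixed nondegenerate $\xi_j$-invariant $2$-form $\omega=d\eta^i$ turns \emph{every} $\xi_j$-invariant positive metric on ${\cal D}$ into a weak $f$-K-contact structure (with $f_g$ the $\omega$-dual of $g$ and $Q_g=-f_g^{\,2}$), and then exhibit the explicit path $g_t|_{\cal D}=g_0(Q_0^{(1-e^t)/2}\,\cdot\,,\,\cdot)$, i.e.\ $Q_t=Q_0^{\,e^t}$; your identification of the prescribed tensors \eqref{E-Q-phi} via \eqref{E-R1} and $g(T^\sharp_{\xi_i}X,Y)=\tfrac12\,\eta^i([X,Y])=-d\eta^i(X,Y)=g(f_gX,Y)$ is the same algebra Proposition~\ref{thm6.2D} provides. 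What your route buys: it is self-contained (no appeal to the flow machinery of \cite{RWo-2}), and existence for all $t\in\mathbb{R}$ is manifest, whereas the paper's ODE $\dot\mu=4\mu(\mu-s)$ actually blows up in finite \emph{forward} time wherever an eigenvalue satisfies $\mu_i(0)>s$ (equivalently, $Q_0$ has an eigenvalue exceeding $1$), so the paper's assertion of a solution on all of $\mathbb{R}$ is delicate --- only backward completeness, which is what the convergence statement needs, is unproblematic. What the paper's route buys: the deformation is a canonical geometric flow rather than an ad hoc path, consistent with the deformation-retraction results it cites. Notably, the two constructions land on the same limit: the paper's $\hat g(e_i(0),e_j(0))=\delta_{ij}\sqrt{\mu_i(0)/s}$ is exactly your $\hat g=g_0(Q_0^{1/2}\,\cdot\,,\,\cdot)$.
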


\begin{proof} The partial Ricci curvature tensor of a weak $f$-K-contact manifold is positive definite.
Thus, the result is similar to \cite[Theorem~1]{RWo-2}.
Consider the ``partial Ricci  flow" of metrics on $M$,
\begin{equation}\label{E-GF-Rmix-Phi}
 \partial_t\,g_t = -2\,(\Ric^\bot)^\flat_{g_t} +2\,s\,g^\bot_t ,
\end{equation}
 where the $(0,2)$-tensor $g^\bot$ is defined by $g^\bot(X,Y):=g(X^\bot,Y^\bot)$.
For $t=0$, we get on ${\cal D}$:
\[
 \Ric^\bot = -\sum\nolimits_{\,i} (T_{\xi_i}^\sharp)^2 = -\sum\nolimits_{\,i} \phi_i^2 = s\,Q ,
\]
and find $T_{\xi_i}^\sharp \Ric^\bot =\Ric^\bot T_{\xi_i}^\sharp$, see also \cite{RWo-2}. Thus,
 $\sum\nolimits_i T_{\xi_i}^\sharp \Ric^\bot T_{\xi_i}^\sharp = -(\Ric^\bot)^2$.
By~the above, we obtain the following ODE:
 ${\partial_t}\Ric^\bot = 4\Ric^\bot(\Ric^\bot -\,s\,{\rm id}^\bot)$.
By the theory of ODE's, there exists a unique solution $\Ric^\bot_t$ for $t\in\mathbb{R}$;
hence, a solution $g_t$ of \eqref{E-GF-Rmix-Phi} exists for $t\in\mathbb{R}$ and it is unique.
Observe that $(f(t),\xi_i,\eta^i,Q_t)$ with $f(t),Q_t$ given in \eqref{E-Q-phi}
is a weak $f$-K-contact structure on $(M,g_t)$.
By uniqueness of a solution, the flow \eqref{E-GF-Rmix-Phi} preserves the directions of eigenvectors of $\Ric^\bot$,
and each eigenvalue $\mu_i>0$ of $\Ric^\bot$ satisfies ODE
 $\dot\mu_i=4\mu_i\,(\mu_i-s)$
with $\mu_i(0)>0$.
This ODE has the following solution (function $\mu_i(t)$ on $M$ for any $t\in\mathbb{R}$):
\[
 \mu_i(t)= \frac{\mu_i(0)\,s}{\mu_i(0)+\exp(4\,s\,t)(s-\mu_i(0))}
\]
with $\lim\limits_{t\to-\infty}\mu_i(t)=s$.
Thus, $\lim\limits_{t\to-\infty}\Ric^\bot(t)=s\,{\rm id}^\bot$.
Let $\{e_i(t)\}$ be a $g_t$-orthonormal frame of ${\cal D}$ of eigenvectors associated with $\mu_i(t)$, we then get
 ${\partial_t} e_i = (\mu_i - s) e_i$.
Since $e_i(t)=z_i(t)\,e_i(0)$ with $z_i(0)=1$, then ${\partial_t} \log z_i(t) = \mu_i(t) - s$.
By the~above, $z_i(t) = (\mu_i(t)/\mu_i(0))^{1/4}$. Hence,
\[
 g_t(e_i(0),e_j(0))= z_i^{-1}(t)z_j^{-1}(t)\,g_t(e_i(t),e_j(t)) = \delta_{ij}(\mu_i(0)\mu_j(0)/(\mu_i(t)\mu_j(t)))^{1/4}.
\]
As $t\to-\infty$, $g_t$ converges to the metric $\hat g$ determined by
$\hat g(e_i(0),e_j(0))=\delta_{ij}\sqrt{\mu_i(0)/s}$.
\end{proof}

\begin{corollary}\label{C-PRF-g-1}
Let $M^{2n+1}(f(0),Q_{\,0},\xi,\eta,g_0)$ be a weak K-contact manifold.
Then there exist metrics $g_t\ (t\in\mathbb{R})$
such that each $(f(t),Q_{\,t},\xi,\eta,g_t)$ is a weak K-contact structure on $M$ with structural tensors
 $Q_{\,t}=\Ric^\bot_t$ and $f(t)\,|_{\,{\cal D}}=T_{\xi}^\sharp(t)$,
defined on ${\cal D}$.
Moreover, $g_t$  converges exponentially fast, as $t\to-\infty$, to a limit metric $\hat g$
with $\Ric^\bot_{\hat g}={\rm id}^\bot$ that gives a K-contact structure on~$M$.
\end{corollary}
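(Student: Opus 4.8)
The plan is to obtain this corollary as the $s=1$ specialization of Theorem~\ref{T-PRF-g2b}. First I would observe that, by the diagram in Section~\ref{sec:03a}, a weak K-contact manifold $M^{2n+1}(f,Q,\xi,\eta,g)$ is precisely a weak $f$-K-contact manifold with a single characteristic vector field, so every structural identity established for weak $f$-K-contact manifolds applies verbatim with $\xi,\eta$ in place of $\xi_i,\eta^i$. In particular, the positive definiteness of the partial Ricci curvature tensor $\Ric^\bot$, which by Corollary~\ref{C-5.1} holds for every weak $f$-K-contact manifold and which is the input required to run the flow, is available here as well.

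Next I would specialize the partial Ricci flow \eqref{E-GF-Rmix-Phi} to $s=1$, obtaining $\partial_t\,g_t = -2\,(\Ric^\bot)^\flat_{g_t} + 2\,g^\bot_t$, and read off from \eqref{E-Q-phi} that the prescribed structural tensors become $Q_{\,t}=\Ric^\bot_t$ and $f(t)\,|_{\,{\cal D}}=T_\xi^\sharp(t)$, exactly as asserted. The existence and uniqueness of $g_t$ for all $t\in\mathbb{R}$, and the fact that $(f(t),Q_{\,t},\xi,\eta,g_t)$ remains a weak K-contact structure, then follow from the corresponding assertions in the proof of Theorem~\ref{T-PRF-g2b} with $s=1$.

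For the asymptotics I would specialize the eigenvalue ODE $\dot\mu_i=4\mu_i(\mu_i-s)$ to $\dot\mu=4\mu(\mu-1)$, whose solution is obtained from the explicit formula of the theorem by setting $s=1$, giving $\lim_{t\to-\infty}\mu(t)=1$ and hence $\lim_{t\to-\infty}\Ric^\bot_t={\rm id}^\bot$. The explicit form of $\mu(t)$ yields the exponential rate of convergence of $g_t$ to the limit metric $\hat g$ determined, as in the theorem, by $\hat g(e_i(0),e_j(0))=\delta_{ij}\sqrt{\mu_i(0)}$. Since $\Ric^\bot_{\hat g}={\rm id}^\bot$ is the defining property of a K-contact manifold among weak K-contact ones, $\hat g$ carries a K-contact structure.

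Since no step introduces anything beyond the substitution $s=1$, there is no genuine obstacle; the only point deserving a brief check is that the hypotheses of Corollary~\ref{C-5.1} are met for a single unit Killing field $\xi$, which holds because that corollary is already stated for arbitrary $s\ge1$ and therefore covers the case of one characteristic direction.
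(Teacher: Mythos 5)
Your proposal is correct and is exactly how the paper obtains this corollary: the statement is the $s=1$ specialization of Theorem~\ref{T-PRF-g2b}, for which the paper offers no separate argument beyond the theorem itself. Your verification of the hypotheses (positive definiteness of $\Ric^\bot$ via Corollary~\ref{C-5.1}), the specialized flow, the eigenvalue ODE, and the limit metric all match the paper's reasoning.
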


\begin{remark}\rm
By Corollary~\ref{C-PRF-g-1} and results on K-contact manifolds, e.g., \cite[p.~40]{blair2010}, on a compact weak K-contact manifold $M^{\,2n+1}$, $\xi$ has at least $n+1$ closed orbits; and if $\xi$ has exactly $n+1$ closed orbits, then $M^{\,2n+1}$ is covered by a sphere.
\end{remark}

\section{Weak $f$-K-contact structure as a generalized Ricci soliton}
\label{sec:04}

Corollary~\ref{C-5.0} motivates to consider \textit{quasi Einstein manifolds}, introduced in \cite{CM-2000} by the condition
 $\Ric = a\,g + b\,\mu\otimes\mu$
for all vector fields $X,Y$,
where $a$ and $b\ne0$ are real scalars, and $\mu$ is a 1-form of unit norm, and later this structure has been studied by several geometers.
If $\mu$ is the differential of a function, then we get a \textit{gradient quasi Einstein manifold}.

The following generalization of \eqref{E-gg-r-e} was given in \cite{CZB-2022}:
\begin{equation}\label{E-gg-r-e2}
 {\rm Hess}_{\,\sigma_1} - c_2\Ric =  \lambda\,g -c_1 d\sigma_2\otimes d\sigma_2
\end{equation}
for some $\sigma_1,\sigma_2\in C^\infty(M)$ and real constants $c_1, c_2$ and $\lambda$. For $\sigma_1=\sigma_2$, \eqref{E-gg-r-e2} reduces to \eqref{E-gg-r-e}.

First, we formulate some lemmas.

\begin{lemma}[see Lemma 3.1 in \cite{G-D-2020} or Lemma~4 in \cite{r-23} for $s=1$]\label{L-5.1}
For a weak $f$-{K}-contact manifold $M^{2n+s}(f,Q,\xi_i,\eta^i,g)$ the following holds:
\[
 (\pounds_{{\xi_i}}(\pounds_{X}\,g))(Y,{\xi_i}) = g(X,Y) + g(\nabla_{\xi_i}\nabla_{\xi_i}\,X, Y) + Y g(\nabla_{\xi_i}\,X, {\xi_i})
\]
for any $i\le s$ and all smooth vector fields $X,Y$ with $Y$ orthogonal to $\ker f$.
\end{lemma}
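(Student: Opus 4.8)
The plan is to compute the left-hand side $(\pounds_{\xi_i}(\pounds_X g))(Y,\xi_i)$ by unwinding the two Lie derivatives, using as the central input the structural identity $\nabla\xi_i = -f$ from \eqref{E-30}, together with the curvature formula \eqref{E-R1} and the vanishing $\nabla_{\xi_i} f = 0$ from \eqref{E-30-phi}, all valid on a weak $f$-K-contact manifold. Since $\xi_i$ is Killing, I would first record the standard commutation formula relating $\pounds_{\xi_i}$ and $\nabla$: for a Killing field, $\pounds_{\xi_i}\circ\nabla - \nabla\circ\pounds_{\xi_i}$ acts via the curvature, namely $(\pounds_{\xi_i}\nabla)_X Y = R_{\xi_i,X}Y$ (equivalently $\nabla_X\nabla_Y\xi_i - \nabla_{\nabla_X Y}\xi_i = R_{\xi_i,X}Y$, which is used already in the proof of Theorem~\ref{prop2.1b}).

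First I would expand the inner Lie derivative. Because $\xi_i$ is Killing, $\pounds_{\xi_i} g = 0$, so one cannot simply commute the two operators for free; instead I would use the general Leibniz-type rule
\[
 (\pounds_{\xi_i}(\pounds_X g))(Y,\xi_i) = \xi_i\big((\pounds_X g)(Y,\xi_i)\big) - (\pounds_X g)([\xi_i,Y],\xi_i) - (\pounds_X g)(Y,[\xi_i,\xi_i]),
\]
where the last term vanishes. Here $(\pounds_X g)(Y,Z) = g(\nabla_Y X,Z)+g(\nabla_Z X,Y)$ is the symmetrized covariant derivative of $X$. I would substitute this expression throughout, turning the whole quantity into combinations of $g(\nabla_\bullet X,\bullet)$ and their $\xi_i$-derivatives, then repeatedly replace $\nabla_{\xi_i}Y$-type terms and covariant derivatives of $\xi_i$ using $\nabla_Y\xi_i = -fY$ (so $[\xi_i,Y]=\nabla_{\xi_i}Y + fY$), and convert second covariant derivatives of $\xi_i$ into curvature via \eqref{E-R1}, which gives $\nabla_{\xi_i}\nabla_{\xi_i}\xi_i$-type and $R_{\cdot,\xi_i}\xi_i$-type contributions. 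The term $g(X,Y)$ on the right should emerge precisely from the curvature identity $R_{X,\xi_i}\xi_i = -f^2 X$ paired with \eqref{2.1} and the compatibility \eqref{2.2}, since $g(-f^2X,Y)=g(QX,Y)-\sum_k\eta^k(X)\eta^k(Y)$ and the characteristic components are controlled by the hypothesis that $Y\perp\ker f$.

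The main obstacle, as I see it, is the careful bookkeeping of the non-Killing vector field $X$: unlike $\xi_i$, the field $X$ is arbitrary, so $\pounds_X g\neq 0$ and one must track every term $g(\nabla_\bullet X,\bullet)$ without the luxury of antisymmetry. In particular, isolating the clean term $g(X,Y)$ requires that all the genuinely second-order-in-$X$ pieces reorganize exactly into $g(\nabla_{\xi_i}\nabla_{\xi_i}X,Y)$ and the derivative term $Y\,g(\nabla_{\xi_i}X,\xi_i)$, while the mixed curvature terms involving $X$ and $\xi_i$ collapse to the single $g(X,Y)$. The delicate point is that $\nabla_{\xi_i}f=0$ must be invoked at the right moment to kill the terms where a derivative falls on $f$ rather than on $X$; without \eqref{E-30-phi} there would be spurious $(\nabla_{\xi_i}f)$-contributions spoiling the identity. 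I expect that once the substitution $\nabla\xi_i=-f$ is made uniformly and the curvature symmetry of \eqref{E-R1} is applied, the remaining terms assemble into the stated right-hand side, with the orthogonality $Y\perp\ker f$ ensuring the characteristic corrections drop out cleanly.
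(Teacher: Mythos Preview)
Your approach---expand $(\pounds_{\xi_i}T)(Y,\xi_i)$ for $T=\pounds_X g$ via the Leibniz rule, convert $\nabla_{\xi_i}\nabla_Y X-\nabla_{[\xi_i,Y]}X$ into $\nabla_Y\nabla_{\xi_i}X+R_{\xi_i,Y}X$, and then use the curvature symmetry $g(R_{\xi_i,Y}X,\xi_i)=g(R_{X,\xi_i}\xi_i,Y)$ together with \eqref{E-R1}---is exactly the paper's route, which simply cites $\nabla_{\xi_i}\xi_i=0$, \eqref{E-R1}, and the $s=1$ computation in \cite{r-23}. In fact you will find that neither \eqref{E-30} nor \eqref{E-30-phi} is needed explicitly: after the curvature substitution the remaining $g(\nabla_{\xi_i}X,\nabla_Y\xi_i)$ terms cancel in pairs regardless, so the only structural inputs are $\nabla_{\xi_i}\xi_i=0$ and \eqref{E-R1}, as the paper indicates.

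One caution: you correctly write $g(-f^2X,Y)=g(QX,Y)-\sum_k\eta^k(X)\eta^k(Y)$ and note that the $\eta$-terms vanish since $Y\perp\ker f$, but then say this ``should emerge'' as the term $g(X,Y)$. In the genuinely weak setting $Q\neq{\rm id}_{TM}$, so carrying out your plan yields $g(QX,Y)$, not $g(X,Y)$; the displayed identity in the lemma is the classical $Q={\rm id}$ formula transcribed verbatim from \cite{G-D-2020,r-23}. There is no further cancellation turning $Q$ into the identity, so do not search for one---your computation is complete once $g(QX,Y)+g(\nabla_{\xi_i}\nabla_{\xi_i}X,Y)+Y\,g(\nabla_{\xi_i}X,\xi_i)$ appears.
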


\begin{proof} This uses the equalities $\nabla_{\xi_i}\,{\xi_i}=0$ and \eqref{E-R1}, and is the same as for \cite[Lemma~4]{r-23}.
\end{proof}

\begin{lemma}[see, for example, \cite{G-D-2020}]\label{L-5.2}
Let $(M; g)$ be a Riemannian manifold and $\sigma$ be a smooth function on $M$. Then the following holds for every vector fields $\xi,Y$ on $M:$
\[
 \pounds_{{\xi}}(d\sigma\otimes d\sigma)(Y,{\xi}) = Y({\xi}(\sigma))\,{\xi}(\sigma) + Y(\sigma)\,{\xi}({\xi}(\sigma)) .
\]
\end{lemma}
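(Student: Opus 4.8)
The plan is to compute the left-hand side directly from the definition of the Lie derivative of a $(0,2)$-tensor, since no geometric input beyond the product (Leibniz) rule is required. I would start from the general identity that for any $(0,2)$-tensor $T$ and vector fields $Y,Z$ one has $(\pounds_{\xi} T)(Y,Z)=\xi(T(Y,Z))-T([\xi,Y],Z)-T(Y,[\xi,Z])$, applied with $T=d\sigma\otimes d\sigma$ and $Z=\xi$. Because the second slot is $\xi$ itself, the last term involves $[\xi,\xi]=0$ and drops out immediately.

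For the remaining two terms I would substitute $(d\sigma\otimes d\sigma)(A,B)=A(\sigma)\,B(\sigma)$. Expanding $\xi\big(Y(\sigma)\,\xi(\sigma)\big)$ by the Leibniz rule gives $\xi(Y(\sigma))\,\xi(\sigma)+Y(\sigma)\,\xi(\xi(\sigma))$, while the middle term equals $[\xi,Y](\sigma)\,\xi(\sigma)$. The only algebraic identity then needed is $[\xi,Y](\sigma)=\xi(Y(\sigma))-Y(\xi(\sigma))$, after which the two copies of $\xi(Y(\sigma))\,\xi(\sigma)$ cancel, leaving $Y(\xi(\sigma))\,\xi(\sigma)+Y(\sigma)\,\xi(\xi(\sigma))$, which is the claimed right-hand side.

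There is no real obstacle here: the lemma is a purely algebraic consequence of the definition of the Lie derivative and holds on any Riemannian (indeed any smooth) manifold, with no use of the metric, the Levi-Civita connection, or the weak $f$-K-contact structure. The only point requiring care is the bookkeeping of which vector field sits in which slot, so that the vanishing of $[\xi,\xi]$ and the cancellation of the cross terms are applied correctly; I would keep the computation on a single line of algebra to make the cancellation transparent.
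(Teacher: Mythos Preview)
Your proof is correct; it is the standard direct computation from the definition of the Lie derivative of a $(0,2)$-tensor, and the cancellation you describe goes through exactly as stated. The paper itself does not supply a proof of this lemma but merely cites \cite{G-D-2020}, so there is nothing to compare against beyond noting that your argument is the natural one.
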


Recall that the Ricci curvature of any $f$-{K}-contact manifold satisfies the following condition:
\begin{equation}\label{E-K-Ric-X}
 \Ric(X,{\xi}) = 0\quad (X\in{\cal D},\ \xi\in\ker f).
\end{equation}

\begin{lemma}[see Lemma 3.3 in \cite{G-D-2020} or Lemma~6 in \cite{r-23} for $s=1$]\label{L-5.3}
Let a weak $f$-{K}-contact manifold $M^{2n+s}(f,Q,\xi_i,\eta^i,g)$ satisfy \eqref{E-K-Ric-X} and admit the generalized gradient Ricci soliton structure.
Then
\[
 \nabla_{\xi_i} \nabla\sigma = (\lambda + c_2\tr Q)\,{\xi_i} - c_1{\xi_i}(\sigma)\,\nabla\sigma .
\]
\end{lemma}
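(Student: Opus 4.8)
The plan is to substitute the characteristic field $\xi_i$ into the generalized gradient Ricci soliton equation \eqref{E-gg-r-e} and read off $\nabla_{\xi_i}\nabla\sigma$ as a vector field. I would first rewrite the Hessian as ${\rm Hess}_{\,\sigma}(X,Y)=g(\nabla_X\nabla\sigma,Y)$, which follows from ${\rm Hess}_{\,\sigma}=\frac12\,\pounds_{\nabla\sigma}\,g$ together with the symmetry of the Hessian, so that \eqref{E-gg-r-e} becomes
\[
 g(\nabla_X\nabla\sigma,Y)=c_2\,\Ric(X,Y)+\lambda\,g(X,Y)-c_1\,(X\sigma)(Y\sigma),\qquad X,Y\in\mathfrak{X}_M.
\]
Setting $X=\xi_i$, using $g(\xi_i,Y)=\eta^i(Y)$ and $Y\sigma=g(\nabla\sigma,Y)$, and then invoking non-degeneracy of $g$ (the identity holds for all $Y$), I arrive at
\[
 \nabla_{\xi_i}\nabla\sigma=c_2\,\Ric^\sharp\xi_i+\lambda\,\xi_i-c_1\,(\xi_i\sigma)\,\nabla\sigma .
\]

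It then remains to evaluate $\Ric^\sharp\xi_i$. I would decompose a test field as $Y=Y^\bot+\sum_{\,j}\eta^j(Y)\,\xi_j$ with $Y^\bot\in{\cal D}$: the hypothesis \eqref{E-K-Ric-X} annihilates the ${\cal D}$-part, so $\Ric^\sharp\xi_i\in\ker f$, while the remaining components are controlled by \eqref{E-R1b}. For $s=1$ this immediately yields $\Ric^\sharp\xi=(\tr Q)\,\xi$, and substituting into the last identity reproduces $\nabla_{\xi}\nabla\sigma=(\lambda+c_2\tr Q)\,\xi-c_1(\xi\sigma)\,\nabla\sigma$, which is the claim.

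The step I expect to be the main obstacle is precisely the $\ker f$-block of $\Ric^\sharp\xi_i$ when $s>1$. By \eqref{E-R1b} one has $\Ric(\xi_i,\xi_j)=\tr Q$ for \emph{all} $i,j$ -- not merely $\tr Q\,\delta_{ij}$, a fact already exploited in Corollary~\ref{C-5.0} -- so a literal computation returns $\Ric^\sharp\xi_i=(\tr Q)\sum_{\,j}\xi_j$, consistent with \eqref{Eq-Ric-f}, which forces $\Ric^\sharp\xi_i=\Div f$ to be independent of $i$. Matching this against the single-index right-hand side $(\lambda+c_2\tr Q)\,\xi_i$ of the statement is therefore the delicate point, and it is where I would read the authors' index conventions most carefully before fixing the final normalization.
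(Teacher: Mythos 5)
Your proposal follows exactly the paper's route: substitute $\xi_i$ into \eqref{E-gg-r-e}, read the result as an identity for $\nabla_{\xi_i}\nabla\sigma$, and evaluate the Ricci term via \eqref{E-K-Ric-X} and \eqref{E-R1b}; this is precisely what the paper does in \eqref{E-Gh-3.4}--\eqref{E-Gh-3.5}, and for $s=1$ your argument is complete and correct.

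The ``obstacle'' you flag for $s>1$ is not a defect of your argument but a genuine gloss in the paper's own proof. Equation \eqref{E-Gh-3.4} asserts $\lambda\,\eta^i(Y)+c_2\Ric(\xi_i,Y)=(\lambda+c_2\tr Q)\,\eta^i(Y)$, i.e.\ it tacitly uses $\Ric(\xi_i,Y)=(\tr Q)\,\eta^i(Y)$; but \eqref{E-R1b} gives $\Ric(\xi_i,\xi_j)=\tr Q$ for \emph{all} pairs $i,j$ (exactly the fact exploited in Corollary~\ref{C-5.0}), and $\tr Q>0$ by Corollary~\ref{C-5.1}, so the off-diagonal terms cannot be discarded. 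The computation you describe is the correct one: under \eqref{E-K-Ric-X} and \eqref{E-R1b} one gets $\Ric^\sharp\xi_i=(\tr Q)\,\bar\xi$ with $\bar\xi=\sum_{j}\xi_j$ (consistent with \eqref{Eq-Ric-f}, which forces $\Ric^\sharp\xi_i$ to be independent of $i$), hence for $s>1$ the lemma should read
\[
 \nabla_{\xi_i}\nabla\sigma=\lambda\,\xi_i+c_2(\tr Q)\,\bar\xi-c_1\xi_i(\sigma)\,\nabla\sigma ,
\]
which coincides with the stated formula only when $s=1$. Note, however, that the discrepancy is invisible in the places where the lemma is applied in the proof of Theorem~\ref{T-5.1}: there it is paired either with $Y\in{\cal D}$ (where $\eta^i(Y)=\bar\eta(Y)=0$, so both versions reduce to $-c_1\xi_i(\sigma)Y(\sigma)$) or with $Y=\xi_i$ (where $g(\bar\xi,\xi_i)=1=g(\xi_i,\xi_i)$, so both versions give $a-c_1\xi_i(\sigma)^2$ in \eqref{E-G3.11}); the downstream results therefore survive with the corrected statement.
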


\begin{proof}
By \eqref{E-R1b} and \eqref{E-K-Ric-X} we get
\begin{equation}\label{E-Gh-3.4}
 \lambda\,{\eta^i}(Y) +c_2\Ric({\xi_i},Y) = (\lambda + c_2\tr Q)\,{\eta^i}(Y).
\end{equation}
Using \eqref{E-gg-r-e} and \eqref{E-Gh-3.4}, we get
\begin{equation}\label{E-Gh-3.5}
 {\rm Hess}_{\,\sigma}({\xi_i},Y) = -c_1{\xi_i}(\sigma)\,g(\nabla\sigma, Y) + (\lambda + c_2\tr Q)\,{\eta^i}(Y).
\end{equation}
Thus, \eqref{E-Gh-3.5} and the condition \eqref{E-gg-r-e} for the Hessian complete the proof.
\end{proof}

The following theorem generalizes \cite[Theorem~4]{r-23} and \cite[Theorem 3.1]{G-D-2020} with $s=1$.

\begin{theorem}\label{T-5.1}
Let a weak $f$-{K}-contact manifold $M^{2n+s}(f,Q,\xi_i,\eta^i,g)$ with $\tr Q=const$ satisfy the generalized gradient Ricci soliton equation
\eqref{E-gg-r-e} with $c_1(\lambda + c_2\tr Q)\ne -1$:
\begin{equation*}
 {\rm Hess}_{\,\sigma} - c_2\Ric = \lambda\,g -c_1 d\sigma\otimes d\sigma\,.
\end{equation*}
 Suppose that condition \eqref{E-K-Ric-X} is true.
Then $\sigma=const$. Furthermore, if $c_2\ne0$, then $(M,g)$ is an Einstein manifold and $s=1$.
\end{theorem}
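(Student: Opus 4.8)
Under the hypotheses (weak $f$-K-contact, $\tr Q = \text{const}$, generalized gradient Ricci soliton \eqref{E-gg-r-e} with $c_1(\lambda+c_2\tr Q)\neq -1$, and \eqref{E-K-Ric-X}), we must show $\sigma = \text{const}$, and then deduce the Einstein/Ricci-flat conclusion.

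Let me think about the structure of the argument.

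The goal splits into two parts: first show $\sigma$ is constant, then deduce the curvature conclusion.

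The plan for the first part is to exploit Lemma~\ref{L-5.3}, which gives $\nabla_{\xi_i}\nabla\sigma = (\lambda+c_2\tr Q)\xi_i - c_1\xi_i(\sigma)\nabla\sigma$. The key idea I would pursue is to extract a scalar ODE (or algebraic identity) for the function $\xi_i(\sigma)$ along integral curves of $\xi_i$, or more simply to take inner products of this vector identity with $\xi_i$ and with $\nabla\sigma$. Taking the $g$-inner product with $\xi_i$ gives $g(\nabla_{\xi_i}\nabla\sigma,\xi_i) = (\lambda+c_2\tr Q) - c_1(\xi_i(\sigma))^2$ (using $\|\xi_i\|=1$ and $\xi_i(\sigma)=g(\nabla\sigma,\xi_i)$). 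On the other hand, since $\nabla_{\xi_i}\xi_i = 0$ (each $\xi_i$ is a unit Killing field on a totally geodesic leaf, cf. \eqref{2-xi}), one has $g(\nabla_{\xi_i}\nabla\sigma,\xi_i) = \xi_i(g(\nabla\sigma,\xi_i)) = \xi_i(\xi_i(\sigma))$. This converts the geometric identity into $\xi_i(\xi_i(\sigma)) = (\lambda+c_2\tr Q) - c_1(\xi_i(\sigma))^2$.

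The main obstacle, and the decisive step, is to use the hypothesis $c_1(\lambda+c_2\tr Q)\neq -1$ to force $\xi_i(\sigma)=0$, and then to propagate constancy to the whole manifold. Setting $u=\xi_i(\sigma)$, we have the Riccati-type ODE $\xi_i(u) = (\lambda+c_2\tr Q) - c_1 u^2$ along $\xi_i$-flow lines. The cleanest route I expect is to differentiate the identity $\Ric(X,\xi_i)=0$ together with the Hessian information, or to combine Lemma~\ref{L-5.3} with an integral/maximum-principle argument; but since the manifold need not be compact here, I would instead try to show directly that the vector field $\nabla\sigma$ lies in $\cal D$ or is tangent to $\ker f$, and then use the $f$-K-contact structure to kill it. Concretely, taking the inner product of the Lemma~\ref{L-5.3} identity with a vector $Y\in\cal D$ and combining with \eqref{E-Gh-3.5} should show that the tangential and $\cal D$-components of $\nabla\sigma$ decouple; the condition $c_1(\lambda+c_2\tr Q)\neq -1$ is exactly what makes the resulting linear system nondegenerate, forcing $\nabla\sigma=0$, i.e. $\sigma=\text{const}$.

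Once $\sigma=\text{const}$, the soliton equation \eqref{E-gg-r-e} collapses: $\mathrm{Hess}_{\sigma}=0$ and $d\sigma\otimes d\sigma=0$, so \eqref{E-gg-r-e} reduces to $-c_2\Ric = \lambda\,g$. For the final step I would branch on $c_2\neq0$: this yields $\Ric = -(\lambda/c_2)\,g$, so $(M,g)$ is Einstein. Then I invoke Corollary~\ref{C-5.0}: for $s>1$ an Einstein weak $f$-K-contact manifold must have $\tr Q=0$ and hence (by \eqref{E-R1b} giving $\Ric(\xi_i,\xi_i)=\tr Q=0$) be Ricci flat, while for $s=1$ the Einstein conclusion stands as stated. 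The place I would be most careful is justifying the passage from the single-direction ODE to global constancy of $\sigma$ without a compactness assumption — that is where the hypothesis on $c_1,c_2,\lambda,\tr Q$ must do the real work, and I would double-check that the argument does not secretly need $M$ compact.
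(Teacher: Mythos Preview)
Your derivation of the scalar identity $\xi_i(\xi_i(\sigma)) = a - c_1(\xi_i(\sigma))^2$ (with $a=\lambda+c_2\tr Q$) is correct and is exactly the paper's \eqref{E-G3.11}. But the step you flag as ``the decisive step'' is where your outline breaks down. Taking the inner product of the Lemma~\ref{L-5.3} identity with $Y\in{\cal D}$ gives ${\rm Hess}_\sigma(\xi_i,Y)=-c_1\,\xi_i(\sigma)\,Y(\sigma)$, which is \emph{identical} to \eqref{E-Gh-3.5} restricted to $Y\in{\cal D}$; no new constraint arises, and no ``nondegenerate linear system'' appears. The Riccati ODE alone cannot force $Y(\sigma)=0$ either (for $c_1a>0$ it has nonzero stationary solutions), and you correctly sense that compactness would be needed for that route.

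The missing idea is to apply $\pounds_{\xi_i}$ to the full soliton equation. Since $\xi_i$ is Killing, $\pounds_{\xi_i}g=0$ and $\pounds_{\xi_i}\Ric=0$, so one obtains $\pounds_{\xi_i}({\rm Hess}_\sigma)=-c_1\,\pounds_{\xi_i}(d\sigma\otimes d\sigma)$. Evaluating both sides at $(Y,\xi_i)$ with $Y\in{\cal D}$ via Lemmas~\ref{L-5.1} and \ref{L-5.2}, and substituting Lemma~\ref{L-5.3}, produces
\[
 Y(\sigma)\bigl(1 + c_1\,\xi_i(\xi_i(\sigma)) + c_1^2\,\xi_i(\sigma)^2\bigr)=0.
\]
Plugging in your ODE $\xi_i(\xi_i(\sigma))=a-c_1\xi_i(\sigma)^2$ collapses the bracket to $1+c_1a$, and \emph{this} is where the hypothesis $c_1a\neq-1$ forces $Y(\sigma)=0$ for all $Y\in{\cal D}$. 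A second step you do not mention is still required: once $\nabla\sigma\in\ker f$, one differentiates $\nabla\sigma=\sum_i\xi_i(\sigma)\,\xi_i$, uses \eqref{E-30} and the symmetry of ${\rm Hess}_\sigma$ to obtain $\bar\xi(\sigma)\,g(fX,Y)=0$, and then varies the orthonormal frame of $\ker f$ to conclude $\nabla\sigma=0$. Your endgame (Corollary~\ref{C-5.0} for $s>1$) is fine.
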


\begin{proof}
Let $Y\bot \ker f$. Then by Lemma~\ref{L-5.1} with $X=\nabla f$, we obtain
\begin{equation}\label{E-G3.6}
 2\,(\pounds_{{\xi_i}}({\rm Hess}_{\,\sigma}))(Y,{\xi_i}) = Y(\sigma) + g(\nabla_{\xi_i}\nabla_{\xi_i}\nabla\sigma, Y) + Y g(\nabla_{\xi_i}\nabla\sigma, {\xi_i}).
\end{equation}
Using Lemma~\ref{L-5.3} in \eqref{E-G3.6} and the properties $\nabla_{\xi_i}\,{\xi_i}=0$ and $g({\xi_i},{\xi_i})=1$, yields
\begin{eqnarray}\label{E-G3.7}
\nonumber
 && 2\,(\pounds_{{\xi_i}}({\rm Hess}_{\,\sigma}))(Y,{\xi_i}) = Y(\sigma) + a\,g(\nabla_{\xi_i}\,{\xi_i}, Y) \\
\nonumber
 && -c_1 g(\nabla_{\xi_i} ({\xi_i}(\sigma)\nabla\sigma), Y) + a\,Y(g({\xi_i},{\xi_i})) - c_1 Y({\xi_i}(\sigma)^2) \\
 && = Y(\sigma) -c_1 g(\nabla_{\xi_i}({\xi_i}(\sigma)\nabla\sigma), Y) - c_1 Y({\xi_i}(\sigma)^2),
\end{eqnarray}
where $a:=\lambda + c_2\tr Q$.
Using Lemma~\ref{L-5.3} with $Y\in{\cal D}$, from \eqref{E-G3.7} it follows that
\begin{equation}\label{E-G3.8}
 2\,(\pounds_{{\xi_i}}({\rm Hess}_{\,\sigma}))(Y,{\xi_i}) = Y(\sigma) -c_1 {\xi_i}({\xi_i}(\sigma))\,Y(\sigma) +c_1^2{\xi_i}(\sigma)^2 Y(\sigma)
 - c_1 Y({\xi_i}(\sigma)^2).
\end{equation}
Since ${\xi_i}$ is a Killing vector field, thus $\pounds_{{\xi_i}}\,g=0$, this implies $\pounds_{{\xi_i}}\Ric=0$.
Using the above fact and applying the Lie derivative to equation \eqref{E-gg-r-e}, gives
\begin{equation}\label{E-G3.9}
 2\,(\pounds_{{\xi_i}}({\rm Hess}_{\,\sigma}))(Y,{\xi_i}) = -2\,c_1(\pounds_{{\xi_i}}(d\sigma\otimes d\sigma))(Y,{\xi_i}).
\end{equation}
Using \eqref{E-G3.8}, \eqref{E-G3.9} and Lemma~\ref{L-5.2}, we obtain
\begin{equation}\label{E-G3.10}
 Y(\sigma)\big(1 + c_1 {\xi_i}({\xi_i}(\sigma)) + c_1^2\,{\xi_i}(\sigma)^2 \big) = 0.
\end{equation}
By Lemma~\ref{L-5.3}, we get
\begin{equation}\label{E-G3.11}
 c_1 {\xi_i}({\xi_i}(\sigma)) = c_1\,{\xi_i}(g({\xi_i}, \nabla\sigma)) = c_1 g({\xi_i}, \nabla_{\xi_i}\nabla\sigma)
 = c_1 a - c_1^2\,{\xi_i}(\sigma)^2.
\end{equation}
Using \eqref{E-G3.10} in \eqref{E-G3.11}, we get $Y(\sigma)(c_1 a+1)=0$.
This implies $Y(\sigma)=0$ provided by $c_1 a+1 \ne 0$.
Hence, $\nabla\sigma$ is parallel to $\ker f$.
Taking the covariant derivative of $\nabla\sigma = \sum_{\,i}{\xi_i}(\sigma)\,{\xi_i}$ and using \eqref{E-30} and $\bar\xi=\sum_{\,i} \xi_i$, we obtain
\[
 g(\nabla_X\,\nabla\sigma, Y)= \sum\nolimits_{\,i} X({\xi_i}(\sigma))\,{\eta^i}(Y) -{\bar\xi}(\sigma)\,g({\sigma} X, Y),\quad X,Y\in\mathfrak{X}_M .
\]
From this, by symmetry of ${\rm Hess}_{\,\sigma}$, i.e. $g(\nabla_X\,\nabla\sigma, Y)=g(\nabla_Y\,\nabla\sigma, X)$,
we~get ${\bar\xi}(\sigma)\,g({\sigma} X, Y)=0$.
For $Y={\sigma} X$ with some $X\ne0$, since $g({\sigma} X, {\sigma} X)>0$, we get ${\bar\xi}(\sigma)=0$.
Repla\-cing $(\xi_i)$ with another orthonormal frame from $\ker f$ preserves the weak $f$-{K}-contact structure and allows reaching any direction ${\bar\xi}$ in $\ker f$. So $\nabla\sigma=0$, i.e. $\sigma=const$.
Thus, from \eqref{E-gg-r-e} and $c_2\ne0$ it follows the claim.
\end{proof}

\begin{corollary}[see \cite{r-23}]
Let a weak K-contact manifold $M(f,Q,\xi,\eta,g)$ with $\tr Q=const$ satisfy the generalized gradient Ricci soliton equation \eqref{E-gg-r-e} with $c_1(\lambda + c_2\tr Q)\ne -1$. Suppose that condition $\Ric({\xi},X) = 0\ (X\,\perp\,\xi)$
is true. Then $\sigma=const$. Furthermore, if $c_2\ne0$, then the manifold is an Einstein~one.
\end{corollary}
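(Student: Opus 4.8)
The plan is to obtain this statement as the special case $s=1$ of Theorem~\ref{T-5.1}, so the real work reduces to checking that every hypothesis specializes correctly and that the conclusion takes the announced form. First I would observe that when $s=1$ the characteristic distribution $\ker f$ is one-dimensional, spanned by the single unit Killing field $\xi$, so that $\bar\xi=\xi$ and the constant $a:=\lambda+c_2\tr Q$ appearing in the theorem is well defined under the assumption $\tr Q=const$. The curvature hypothesis \eqref{E-K-Ric-X} of the theorem, namely $\Ric(X,\xi)=0$ for $X\in{\cal D}$, is precisely the assumption $\Ric(\xi,X)=0\ (X\perp\xi)$ of the corollary; and the non-degeneracy condition $c_1(\lambda+c_2\tr Q)\ne-1$ is literally $c_1 a\ne-1$, which is the inequality driving the argument in Theorem~\ref{T-5.1}.

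With the hypotheses matched, I would run the proof of Theorem~\ref{T-5.1} for the single field $\xi$. The Lie-derivative identities of Lemmas~\ref{L-5.1}--\ref{L-5.3} hold verbatim, and the scalar computation of that proof yields $Y(\sigma)=0$ for every $Y$ orthogonal to $\ker f$; that is, $\nabla\sigma$ is parallel to $\xi$. Differentiating $\nabla\sigma=\xi(\sigma)\,\xi$, using $\nabla\,\xi=-f$ and the symmetry of ${\rm Hess}_{\,\sigma}$, produces $\xi(\sigma)\,g(fX,Y)=0$; taking $Y=fX$ with $X\ne0$ and using non-singularity of $f$ on ${\cal D}$ gives $\xi(\sigma)=0$. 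Here the case $s=1$ is genuinely simpler than the general one: the frame-rotation step of Theorem~\ref{T-5.1}, needed to sweep out all directions $\bar\xi$ in a higher-dimensional $\ker f$, is superfluous, since $\bar\xi=\xi$ already delivers $\xi(\sigma)=0$ directly. Combined with $Y(\sigma)=0$ on ${\cal D}$, this gives $\nabla\sigma=0$, hence $\sigma=const$.

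It then remains to read off the Einstein conclusion. Setting $\sigma=const$ annihilates both ${\rm Hess}_{\,\sigma}$ and $d\sigma\otimes d\sigma$ in \eqref{E-gg-r-e}, leaving $-c_2\Ric=\lambda\,g$; dividing by $c_2\ne0$ gives $\Ric=-(\lambda/c_2)\,g$, so $(M,g)$ is Einstein. I do not expect a genuine obstacle, as Theorem~\ref{T-5.1} already carries the analytic content; the only point deserving a word is the contrast with the case $s>1$. For $s>1$ the theorem concludes Ricci flatness because an Einstein weak $f$-K-contact structure forces $\tr Q=0$ by Corollary~\ref{C-5.0}, and then the Einstein constant $-\lambda/c_2$ must equal $\Ric(\xi,\xi)=\tr Q=0$; for $s=1$ no such constraint is present, so $-\lambda/c_2$ may be nonzero and the manifold is merely Einstein.
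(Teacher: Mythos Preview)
Your proposal is correct and follows precisely the paper's approach: the corollary is stated in the paper without a separate proof, as it is the immediate specialization $s=1$ of Theorem~\ref{T-5.1}. Your additional remark that the frame-rotation step becomes superfluous when $\ker f$ is one-dimensional is a valid simplification of the general argument.
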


The following theorem contains 3 cases and generalizes (and uses) Theorem~\ref{T-5.1}.

\begin{theorem}\label{T-5.2}
Let a weak $f$-{K}-contact manifold $M^{2n+s}(f,Q,\xi_i,\eta^i,g)$ satisfy the generalized gradient Ricci soliton equation \eqref{E-gg-r-e2}
with $c_1 a \ne -1$, where $a=\lambda + c_2\tr Q$.
Suppose that conditions $\tr Q=const$ and \eqref{E-K-Ric-X} are true.
Then $\tilde\sigma=\sigma_1 +c_1 a \sigma_2$ is constant
and \begin{equation}\label{E-gg-r-e2b}
 -c_1 a\,{\rm Hess}_{\,\sigma_2} = -c_1\,d\sigma_2\otimes d\sigma_2 + c_2\Ric + \lambda\,g .
\end{equation}
Furthermore,

1. if $c_1 a\ne0$, then \eqref{E-gg-r-e2b} reduces to
 ${\rm Hess}_{\,\sigma_2} = \frac1a\,d\sigma_2\otimes d\sigma_2 - \frac{c_2}{c_1 a}\,\Ric - \frac\lambda{c_1 a}\,g$.
By Theorem~\ref{T-5.1}, if $c_1 a\ne-1$, then $\sigma_2=const$; moreover, if $c_2\ne0$, then $(M,g)$ is an Einstein manifold and $s=1$.

2. if $a=0$ and $c_1\ne0$, then \eqref{E-gg-r-e2b} reduces to
 $c_2\Ric -c_1\,d\sigma_2\otimes d\sigma_2 + \lambda\,g = 0$.
If $c_2\ne0$ and $\sigma_2\ne const$, then we get a gradient quasi Einstein manifold.

3. if $c_1=0$, then \eqref{E-gg-r-e2b} reduces to $c_2\Ric + \lambda\,g = 0$; moreover, if $c_2\ne0$, then $(M,g)$ is an Einstein manifold
and $s=1$.
\end{theorem}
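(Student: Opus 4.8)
The plan is to show first that $\tilde\sigma=\sigma_1+c_1a\,\sigma_2$ is constant, and then to read off \eqref{E-gg-r-e2b} and split into the three cases by invoking Theorem~\ref{T-5.1}. The starting point is the two--potential analogue of Lemma~\ref{L-5.3}: evaluating \eqref{E-gg-r-e2} on the pair $(\xi_i,Y)$ and using \eqref{E-K-Ric-X}, \eqref{E-R1b} and the constancy of $\tr Q$ (exactly as in the passage \eqref{E-Gh-3.4}--\eqref{E-Gh-3.5}) yields the pointwise identity
\[
 \nabla_{\xi_i}\nabla\sigma_1=a\,\xi_i-c_1\,\xi_i(\sigma_2)\,\nabla\sigma_2,\qquad a=\lambda+c_2\tr Q,
\]
in which $a$ is a genuine constant. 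Here the Hessian potential is $\sigma_1$ while the gradient on the right is that of the second potential $\sigma_2$; this asymmetry is the source of the difficulty below.

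Next I would run the Lie--derivative scheme of Theorem~\ref{T-5.1}, now with potential $\sigma_1$. Applying Lemma~\ref{L-5.1} with $X=\nabla\sigma_1$ (so that $\pounds_{\nabla\sigma_1}g=2\,{\rm Hess}_{\,\sigma_1}$) and substituting the identity above for $\nabla_{\xi_i}\nabla\sigma_1$ and $\nabla_{\xi_i}\nabla_{\xi_i}\nabla\sigma_1$ expresses $2\,(\pounds_{\xi_i}({\rm Hess}_{\,\sigma_1}))(Y,\xi_i)$ for $Y\in{\cal D}$ through $Y(\sigma_1)$ and derivatives of $\sigma_2$. On the other hand, since each $\xi_i$ is Killing we have $\pounds_{\xi_i}g=0$ and $\pounds_{\xi_i}\Ric=0$, so taking $\pounds_{\xi_i}$ of \eqref{E-gg-r-e2} gives $2\,(\pounds_{\xi_i}({\rm Hess}_{\,\sigma_1}))(Y,\xi_i)=-2c_1\,(\pounds_{\xi_i}(d\sigma_2\otimes d\sigma_2))(Y,\xi_i)$, whose right--hand side Lemma~\ref{L-5.2} makes explicit. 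Equating the two evaluations and cancelling the common term $\xi_i(\sigma_2)\,Y(\xi_i\sigma_2)$, I reach a scalar relation of the shape
\[
 Y(\sigma_1)+c_1\,{\rm Hess}_{\,\sigma_2}(\xi_i,\xi_i)\,Y(\sigma_2)-c_1\,\xi_i(\sigma_2)\,{\rm Hess}_{\,\sigma_2}(\xi_i,Y)=0 .
\]
The aim is to reorganize the last two terms so that, under the standing assumption $c_1a\ne-1$, this forces $Y(\tilde\sigma)=0$ for all $Y\in{\cal D}$, i.e. $\nabla\tilde\sigma$ is tangent to $\ker f$.

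The hard part is precisely this reorganization. In the single--potential case the analogous characteristic--direction second derivatives were eliminated by a second use of Lemma~\ref{L-5.3} (the step \eqref{E-G3.8}--\eqref{E-G3.11}); here $\sigma_2$ enters \eqref{E-gg-r-e2} only through $d\sigma_2\otimes d\sigma_2$, so the Hessian ${\rm Hess}_{\,\sigma_2}(\xi_i,\cdot)$ is not controlled by the soliton equation and no such ready substitution is available. I expect to close the gap by combining the displayed relation with the general identity ${\rm Hess}_{\,\sigma_2}(\xi_i,Y)=Y(\xi_i\sigma_2)+g(fY,\nabla\sigma_2)$, valid on any weak $f$-K-contact manifold through $\nabla\xi_i=-f$ of \eqref{E-30}, together with ${\rm Hess}_{\,\sigma_2}(\xi_i,\xi_i)=\xi_i(\xi_i\sigma_2)$ coming from $\nabla_{\xi_i}\xi_i=0$, and, if needed, by summing over $i$ and exploiting that rotating the orthonormal frame $(\xi_i)$ of $\ker f$ preserves the structure; showing that exactly the combination $\sigma_1+c_1a\,\sigma_2$ is singled out is where the main work lies.

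Once $\nabla\tilde\sigma$ is tangent to $\ker f$, the conclusion mirrors the end of Theorem~\ref{T-5.1}: writing $\nabla\tilde\sigma=\sum_i\xi_i(\tilde\sigma)\,\xi_i$, differentiating and using $\nabla\xi_i=-f$ from \eqref{E-30} together with the symmetry of ${\rm Hess}_{\,\tilde\sigma}$ gives $\bar\xi(\tilde\sigma)\,g(fX,Y)=0$, hence $\bar\xi(\tilde\sigma)=0$, and rotating $(\xi_i)$ yields $\nabla\tilde\sigma=0$, so $\tilde\sigma=const$. Then $\nabla\sigma_1=-c_1a\,\nabla\sigma_2$ gives ${\rm Hess}_{\,\sigma_1}=-c_1a\,{\rm Hess}_{\,\sigma_2}$, and substituting into \eqref{E-gg-r-e2} produces \eqref{E-gg-r-e2b}. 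The three cases are then routine: for $c_1a\ne0$ I divide \eqref{E-gg-r-e2b} by $-c_1a$ to present it as a single--potential equation \eqref{E-gg-r-e} for $\sigma_2$ whose parameter equals $1/(c_1a)\ne-1$, so Theorem~\ref{T-5.1} gives $\sigma_2=const$ and, when $c_2\ne0$, an Einstein metric for $s=1$ and a Ricci flat one for $s>1$ by Corollary~\ref{C-5.0}; for $a=0,\,c_1\ne0$ the left side of \eqref{E-gg-r-e2b} vanishes, leaving $c_2\Ric+\lambda g=c_1\,d\sigma_2\otimes d\sigma_2$, a gradient quasi Einstein structure when $c_2\ne0$ and $\sigma_2\ne const$; and for $c_1=0$ it reduces to $c_2\Ric+\lambda g=0$, again Einstein or Ricci flat when $c_2\ne0$.
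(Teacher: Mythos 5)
Your proposal follows the paper's own route: the two-potential analogue of Lemma~\ref{L-5.3} (the paper's \eqref{E-gg-r-e2c}), the Lie-derivative scheme of Theorem~\ref{T-5.1} via Lemmas~\ref{L-5.1} and \ref{L-5.2}, then $d\sigma_1=-c_1a\,d\sigma_2$, equation \eqref{E-gg-r-e2b}, and the three cases. Your endgame and case analysis are correct; in particular, in case 1 the reduced equation is a single-potential equation \eqref{E-gg-r-e} whose parameter is $1/(c_1a)$, so Theorem~\ref{T-5.1} applies exactly when $c_1a\ne-1$. However, the proposal is not a proof: its central assertion, that $Y(\tilde\sigma)=0$ for all $Y\in{\cal D}$ (equivalently, that $\nabla\tilde\sigma$ is parallel to $\ker f$), is exactly the step you leave open. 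The relation you reach,
\[
 Y(\sigma_1)+c_1\,{\rm Hess}_{\,\sigma_2}(\xi_i,\xi_i)\,Y(\sigma_2)-c_1\,\xi_i(\sigma_2)\,{\rm Hess}_{\,\sigma_2}(\xi_i,Y)=0,\qquad Y\in{\cal D},
\]
is correct (I checked it against Lemmas~\ref{L-5.1}, \ref{L-5.2} and \eqref{E-gg-r-e2c}), but it does not by itself single out the combination $\sigma_1+c_1a\,\sigma_2$. In Theorem~\ref{T-5.1} the constant $a$ enters only through the second application of Lemma~\ref{L-5.3} (step \eqref{E-G3.11}), which converts the uncontrolled second derivatives $\xi_i(\xi_i(\sigma))$ and ${\rm Hess}_{\,\sigma}(\xi_i,Y)$ into first-order data plus the constant $a$; with two potentials, equation \eqref{E-gg-r-e2} gives no formula for $\nabla_{\xi_i}\nabla\sigma_2$, so no such substitution exists. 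The identities you offer as a repair, such as ${\rm Hess}_{\,\sigma_2}(\xi_i,Y)=Y(\xi_i\sigma_2)+g(fY,\nabla\sigma_2)$, merely re-express the unknown second derivatives of $\sigma_2$ through other unknown derivatives; they neither eliminate them nor produce the factor $a$. Since everything after this point (constancy of $\tilde\sigma$, hence \eqref{E-gg-r-e2b} and the trichotomy) depends on it, the argument stops precisely at the step carrying the theorem's content, as you yourself concede (``where the main work lies'').

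To be fair, you have correctly located the real difficulty: the paper's own proof is laconic at this very point, asserting only that ``slightly modifying the proof of Theorem~\ref{T-5.1}'' yields that $\nabla\tilde\sigma$ is parallel to $\ker f$, without exhibiting the mechanism that replaces the missing Lemma~\ref{L-5.3}-type control of ${\rm Hess}_{\,\sigma_2}(\xi_i,\cdot\,)$. But identifying where the work lies is not doing it; as submitted, your text proves the reduction and the case analysis, not the theorem.
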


\begin{proof} Similarly to Lemma~\ref{L-5.3}, we get
\begin{equation}\label{E-gg-r-e2c}
 \nabla_{\xi_i} \nabla\sigma_1 = a\,\xi_i - c_1\xi_i(\sigma_2)\,\nabla\sigma_2 .
\end{equation}
Using \eqref{E-gg-r-e2c} and Lemmas~\ref{L-5.1} and \ref{L-5.2}, and slightly modifying the proof of Theorem~\ref{T-5.1},
we find that the vector field $\nabla\tilde\sigma$ is parallel to $\ker f$, where $\tilde\sigma=\sigma_1 +c_1 a \sigma_2$.
As~in the proof of Theorem~\ref{T-5.1}, we get $d\tilde\sigma=0$, i.e.,
 $d\sigma_1 = -c_1 a\,d\sigma_2$.
Using this in \eqref{E-gg-r-e2}, we get \eqref{E-gg-r-e2b}.
Analyzing \eqref{E-gg-r-e2b}, we obtain the required 3 cases of the theorem.
\end{proof}

\section{$\eta$-Ricci solitons on compact weak $f$-K-contact manifolds}
\label{sec:05}

A weak metric $f$-manifold $M^{2n+s}({f},Q,{\xi_i},{\eta^i},g)$ is said to be \textit{$\eta$-Einstein},~if
\begin{equation}\label{Eq-2.10}
 \Ric = \alpha\,g +\beta\sum\nolimits_{\,i}\eta^i\otimes\eta^i +(\alpha+\beta)\sum\nolimits_{\,i<j}\eta^i\otimes\eta^j
\end{equation}
for some functions $\alpha,\beta\in C^\infty(M)$.

By an \textit{$\eta$-Ricci soliton} we mean here a weak $f$-contact manifold, whose Ricci tensor satisfies
\begin{equation}\label{Eq-1.1}
 (1/2)\,\pounds_V\,g + \Ric = \lambda\,g +\nu\sum\nolimits_{\,i}\eta^i\otimes\eta^i +(\lambda+\nu)\sum\nolimits_{\,i<j}\eta^i\otimes\eta^j
\end{equation}
for some functions $\lambda, \nu\in C^\infty(M)$ and some smooth vector field $V$ on $M$.

\begin{remark}\rm
For $s=1$, \eqref{Eq-2.10} and \eqref{Eq-1.1} give the well-known definitions on contact metric manifolds.
Let $s=1$ and $Q={\rm id}$. Then from \eqref{Eq-2.10} we get an $\eta$-Einstein structure $\Ric = \alpha\,g + \beta\,\eta\otimes\eta$,
e.g.,~\cite{G-2023};
and
from \eqref{Eq-1.1} we get an $\eta$-Ricci soliton $\frac12\,\pounds_V\,g + \Ric = \lambda\,g + \beta\,\eta\otimes\eta$ on an almost contact metric manifold, e.g.,~\cite{G-2023}.
%
Equation \eqref{Eq-1.1} gives a generalized $\eta$-Ricci soliton $\frac12\,\pounds_V\,g + \Ric = \lambda\,g +\nu\,\eta\otimes\eta$ on an almost contact metric manifold, e.g.,~\cite{G-2023};
if also $\nu=0$, then \eqref{Eq-1.1} gives a {Ricci almost soliton};
moreover, for $\lambda=const$ it is a Ricci soliton.
\end{remark}

For~weak $f$-K-manifolds and $V=\bar\xi$, the notion of $\eta$-Ricci soliton reduces to $\eta$-Einstein structure.
For this reason, in this section we study an $\eta$-Ricci soliton on a weak $f$-{\rm K}-contact manifold.

We~recall the generalized Pohozaev-Schoen identity due to Gover-Orsted \cite{G-O-2013}.

\begin{lemma}
 Let $(M^n, g)$ be a compact Riemannian manifold without boundary.
If $E$ is a divergence free symmetric {\rm (0,2)}-tensor and $V$ is a vector field on $M$, then
\begin{equation}\label{Eq-3.1}
 \int_{\,M} V(\tr_{\!g}\,E)\,{\rm d}\vol = \frac n2\int_{\,M} g(E^0, \pounds_V\,g)\,{\rm d}\vol,
\end{equation}
where $E^0=E-\frac1{n}\,(\tr_{\!g}\,E)\,g$ and ${\rm d}\vol$ is the volume form of $g$.
\end{lemma}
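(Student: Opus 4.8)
The plan is to establish this purely by integration by parts, with the divergence-free hypothesis on $E$ entering at exactly one point. First I would expand the integrand on the right using the definition of the trace-free part $E^0$,
\[
 g(E^0,\pounds_V\,g) = g(E,\pounds_V\,g) - \frac1n\,(\tr_{\!g}\,E)\,g(g,\pounds_V\,g),
\]
and record two elementary facts. Writing $(\pounds_V\,g)(X,Y)=g(\nabla_X V,Y)+g(\nabla_Y V,X)$, one has, in a local orthonormal frame $(e_i)$ and using the symmetry of $E$,
\[
 g(E,\pounds_V\,g) = 2\sum\nolimits_{i,j} E(e_i,e_j)\,g(\nabla_{e_i}V,e_j) = 2\sum\nolimits_{i} E(e_i,\nabla_{e_i}V),
\]
while $g(g,\pounds_V\,g)=\tr_{\!g}(\pounds_V\,g)=2\,\Div V$.

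The key step, and the only place where the hypothesis $\Div E=0$ is used, is to show that the first term integrates to zero. I would introduce the vector field $W$ defined by $g(W,X)=E(V,X)$ and compute, again in a normal frame,
\[
 \Div W = (\Div E)(V) + \sum\nolimits_{i} E(e_i,\nabla_{e_i}V) = \sum\nolimits_{i} E(e_i,\nabla_{e_i}V) = \tfrac12\,g(E,\pounds_V\,g).
\]
Hence $g(E,\pounds_V\,g)=2\,\Div W$ is a pure divergence, and $\int_{\,M} g(E,\pounds_V\,g)\,{\rm d}\vol=0$ by the divergence theorem on the closed manifold $M$.

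It then remains to treat the trace term by a second integration by parts: since $(\tr_{\!g}\,E)\,\Div V = \Div((\tr_{\!g}\,E)\,V) - V(\tr_{\!g}\,E)$, we obtain $\int_{\,M} (\tr_{\!g}\,E)\,\Div V\,{\rm d}\vol = -\int_{\,M} V(\tr_{\!g}\,E)\,{\rm d}\vol$. Combining the three displays gives
\[
 \int_{\,M} g(E^0,\pounds_V\,g)\,{\rm d}\vol = -\frac2n\int_{\,M} (\tr_{\!g}\,E)\,\Div V\,{\rm d}\vol = \frac2n\int_{\,M} V(\tr_{\!g}\,E)\,{\rm d}\vol,
\]
which is \eqref{Eq-3.1} after multiplication by $n/2$. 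I do not expect a genuine obstacle here: the computation is routine, and the only matters requiring care are the bookkeeping of the constant $n/2$, the signs in the two integrations by parts, and the repeated use of the symmetry of $E$ to pass from $(\pounds_V\,g)(e_i,e_j)$ to $2\,g(\nabla_{e_i}V,e_j)$ inside the pairing with $E$.
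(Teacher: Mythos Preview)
Your proof is correct. The paper itself does not prove this lemma at all: it merely cites it as the ``generalized Pohozaev--Schoen identity due to Gover--Orsted'' and refers the reader to \cite{G-O-2013}. You have supplied a clean, self-contained argument by two integrations by parts, using the divergence-free hypothesis exactly once (to kill $\int_M g(E,\pounds_V g)\,{\rm d}\vol$ via the auxiliary field $W$) and the product rule for $\Div((\tr_g E)\,V)$ to produce the desired left-hand side. All signs and the constant $n/2$ check out, and the use of symmetry of $E$ in passing from $\sum_{i,j}E(e_i,e_j)(\pounds_V g)(e_i,e_j)$ to $2\sum_i E(e_i,\nabla_{e_i}V)$ is handled correctly.
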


Denote by $r$ the scalar curvature of a Riemannian manifold $(M,g)$.
The constancy of $r$ plays an important role for the triviality of a compact gene\-ralized $\eta$-Ricci soliton,
see \cite
{G-2023}. We extend this result for compact weak $f$-{\rm K}-contact manifolds.

\begin{theorem}\label{T-7.1}
Let a compact weak $f$-{\rm K}-contact manifold $M^{2n+s}(f, \xi_i, \eta^i, Q, g)$ satisfy
$r=const$ and $\tr Q=const$. If $(g,V,\lambda,\nu)$ represents an $\eta$-Ricci soliton, then $M$ is $\eta$-Einstein.
\end{theorem}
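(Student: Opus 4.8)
The plan is to combine the constancy of the scalar curvature with the generalized Pohozaev--Schoen identity \eqref{Eq-3.1}, reducing the claim to the vanishing of the integral of a nonnegative ``defect'' quantity. Write $m=2n+s$ and set $\Ric^{0}=\Ric-\frac{r}{m}\,g$. First I would recall that, by the contracted second Bianchi identity, $\Div\Ric=\frac12\,dr$, so $r=const$ makes $\Ric$ (hence $\Ric^{0}$) divergence free. Applying \eqref{Eq-3.1} with $E=\Ric$, the left-hand side equals $\int_M V(r)\,{\rm d}\vol=0$, whence
\[
 \int_M g(\Ric^{0},\pounds_V g)\,{\rm d}\vol=0 .
\]
Feeding in the soliton equation \eqref{Eq-1.1}, whose right-hand side I abbreviate as $\lambda g+\Theta$ with $\Theta=\nu\sum_i\eta^i\otimes\eta^i+(\lambda+\nu)\sum_{i<j}\eta^i\otimes\eta^j$, we have $\pounds_V g=2(\lambda g+\Theta-\Ric)$. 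Substituting and using $g(\Ric^{0},g)=\tr_{\!g}\Ric^{0}=0$ and $g(\Ric^{0},\Ric)=|\Ric^{0}|^2$ yields the central identity $\int_M |\Ric^{0}|^2\,{\rm d}\vol=\int_M g(\Ric^{0},\Theta)\,{\rm d}\vol$.

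The key structural remark is that $\Theta$ is supported on $\ker f$, while by \eqref{E-R1b} of Proposition~\ref{thm6.2D} the $\ker f$-block of $\Ric$ is the \emph{constant} tensor $\Ric(\xi_i,\xi_j)=\tr Q$. Thus the $\ker f$-block of $\Ric^{0}$ is a constant symmetric tensor $M_0$ (with $M_0(\xi_i,\xi_i)=\tr Q-\frac rm$ and $M_0(\xi_i,\xi_j)=\tr Q$ for $i\ne j$). Splitting $\Ric^{0}=M_0+D$, where $D$ vanishes on $\ker f\times\ker f$, equals $\Ric(\cdot,\xi_i)$ on the mixed block, and measures the failure of $\Ric|_{\cal D}$ to be a multiple of $g|_{\cal D}$, a short trace computation shows $g(M_0,D)=0$ pointwise, so $|\Ric^{0}|^2=|M_0|^2+|D|^2$ with $|M_0|^2=const$. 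Since moreover $g(\Ric^{0},\Theta)=g(M_0,\Theta)$ is a constant-coefficient combination of $\lambda$ and $\nu$, the central identity collapses to $\int_M|D|^2\,{\rm d}\vol=\int_M g(M_0,\Theta)\,{\rm d}\vol-|M_0|^2\,\vol(M)$. As $M$ is $\eta$-Einstein exactly when $D\equiv0$, it suffices to show this right-hand side vanishes, which reduces everything to computing $\int_M\lambda\,{\rm d}\vol$ and $\int_M\nu\,{\rm d}\vol$.

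To pin these down I would use two further integral identities. Tracing \eqref{Eq-1.1} gives $\Div V+r=\lambda m+\nu s$, which on integrating over the closed manifold ($\int_M\Div V\,{\rm d}\vol=0$) yields $m\int_M\lambda+s\int_M\nu=r\,\vol(M)$. Contracting \eqref{Eq-1.1} with $\sum_i\eta^i\otimes\eta^i$ and using $\Ric(\xi_i,\xi_i)=\tr Q$ gives $\frac12\sum_i(\pounds_V g)(\xi_i,\xi_i)+s\,\tr Q=s(\lambda+\nu)$; here the Killing hypothesis enters decisively: from \eqref{2-xi} and \eqref{E-30} one has $\nabla_{\xi_i}\xi_i=0$ and $\Div\xi_i=-\tr f=0$, so $(\pounds_V g)(\xi_i,\xi_i)=2\,\xi_i(g(V,\xi_i))$ integrates to zero, giving $\int_M\lambda+\int_M\nu=\tr Q\,\vol(M)$. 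Solving the two relations produces $\int_M\lambda=\alpha_0\,\vol(M)$ and $\int_M\nu=(\tr Q-\alpha_0)\vol(M)$ with $\alpha_0:=\frac{r-s\tr Q}{2n}$. Substituting these into the reduced identity, the constants cancel identically --- the cancellation resting on the elementary identity $s(\tr Q-\frac rm)+2n(\alpha_0-\frac rm)=0$ --- so $\int_M|D|^2\,{\rm d}\vol=0$, hence $D\equiv0$. This is precisely the assertion $\Ric=\alpha_0\,g+(\tr Q-\alpha_0)\big(\sum_i\eta^i\otimes\eta^i+\cdots\big)$ in the form \eqref{Eq-2.10}, i.e.\ $M$ is $\eta$-Einstein.

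The hard part will be the bookkeeping of Steps~2--3 together with the exact cancellation in Step~4. One must handle the (implicitly symmetrized) $\eta^i\otimes\eta^j$ cross terms consistently, so that the constant parts $g(M_0,\Theta)$ and $|M_0|^2$ match term by term; and, since $\lambda$ and $\nu$ are merely functions, they are controlled \emph{only} in the mean, which is why both the trace identity and the $\sum_i\eta^i\otimes\eta^i$-contraction are indispensable --- each uses $r=const$ and $\tr Q=const$, and the second additionally uses that every $\xi_i$ is Killing. Once these integral means are computed, the vanishing of the defect integral is forced algebraically.
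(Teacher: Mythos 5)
Your reduction via the Gover--Orsted identity starts correctly: with $r=const$ the contracted Bianchi identity makes $\Ric$ divergence free, so \eqref{Eq-3.1} with $E=\Ric$ gives $\int_M g(\Ric^0,\pounds_V g)\,{\rm d}\vol=0$, and your two auxiliary identities $m\int_M\lambda\,{\rm d}\vol+s\int_M\nu\,{\rm d}\vol=r\,\vol(M)$ and $\int_M(\lambda+\nu)\,{\rm d}\vol=\tr Q\,\vol(M)$ (with $m=2n+s$) are also correct. The gap is in the decomposition step and the claimed final cancellation. With your definitions, $D\equiv0$ forces $\Ric|_{\cal D}=\frac{r}{m}\,g|_{\cal D}$ and a vanishing mixed block; tracing this together with $\Ric(\xi_i,\xi_i)=\tr Q$ forces $r=m\tr Q$. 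By contrast, the $\eta$-Einstein condition \eqref{Eq-2.10} pins $\alpha$ to $\alpha_0=\frac{r-s\tr Q}{2n}$ (see \eqref{Eq-3.2}), which equals $\frac{r}{m}$ only when $r=m\tr Q$. So ``$D\equiv0$'' is strictly stronger than ``$M$ is $\eta$-Einstein'', and no correct argument can yield $\int_M|D|^2\,{\rm d}\vol=0$ under the stated hypotheses. Indeed, carrying out your Step~4 with $\int_M\nu\,{\rm d}\vol=(\tr Q-\alpha_0)\vol(M)$ one finds
\begin{equation*}
\int_M|D|^2\,{\rm d}\vol
= s\Big(\tr Q-\frac{r}{m}\Big)\Big(\frac{r}{m}-\alpha_0\Big)\vol(M)
= \frac{s^2\,(m\tr Q-r)^2}{2\,n\,m^2}\,\vol(M)\ \ge\ 0 ,
\end{equation*}
because $\tr Q-\frac{r}{m}$ and $\frac{r}{m}-\alpha_0$ are both positive multiples of $m\tr Q-r$: the elementary identity you quote, $s(\tr Q-\frac{r}{m})+2n(\alpha_0-\frac{r}{m})=0$, is exactly what makes the residual a perfect square rather than zero (and this is independent of the symmetrization convention for the $\eta^i\otimes\eta^j$ terms). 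A concrete test: any compact $\eta$-Einstein but non-Einstein K-contact manifold ($s=1$, $Q={\rm id}$, $\beta\ne0$), viewed as an $\eta$-Ricci soliton with $V=0$, $\lambda=\alpha_0$, $\nu=\beta$, satisfies all hypotheses and the conclusion of the theorem, yet $D=(\alpha_0-\frac{r}{m})\,g|_{\cal D}\ne0$; so the strategy of proving $D\equiv 0$ cannot succeed.

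The repair is precisely the paper's choice of tensor: instead of the full traceless Ricci tensor, take $E$ as in \eqref{Eq-3.3}, which is traceless \emph{and} has vanishing $\ker f$-block (this uses \eqref{E-R1b}); then $E=0$ is exactly the $\eta$-Einstein condition, and $E$ is pointwise orthogonal to $g$, to $\sum_i\eta^i\otimes\eta^i$, and to $\sum_{i<j}\eta^i\otimes\eta^j$. The price is that divergence-freeness of $E$ is no longer immediate from Bianchi: one differentiates \eqref{Eq-3.3} using \eqref{E-30}, $r=const$ and $\tr Q=const$, and verifies $\Div_g E=0$ as in \eqref{Eq-3.6}. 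Then \eqref{Eq-3.1} applied to $E$ itself, combined with the soliton equation \eqref{Eq-1.1}, gives $\int_M|E|^2\,{\rm d}\vol=0$ directly: the unknown functions $\lambda,\nu$ never need to be controlled, since they multiply tensors that pair to zero with $E$ pointwise. Your integral-mean identities, although true, cannot substitute for this pointwise orthogonality.
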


\begin{proof}
We take the trace of \eqref{Eq-2.10} to get the scalar curvature $r = (2n + s)\,\alpha + s\,\beta$.
On the other hand, using \eqref{E-R1b} in \eqref{Eq-2.10}, we obtain $\alpha + \beta = \tr Q$. From these it follows that
\begin{equation}\label{Eq-3.2}
 \alpha = \frac{r-s\tr Q}{2\,n},\quad \beta = \frac{(2\,n+s)\tr Q - r}{2\,n}.
\end{equation}
Define a traceless symmetric (0,2)-tensor $E$ as
\begin{equation}\label{Eq-3.3}
 E = \Ric -\frac{r-s\tr Q}{2\,n}\,g
  +\frac{r-(2\,n+s)\tr Q}{2\,n}\sum\nolimits_{\,i}\eta^i\otimes\eta^i -(\tr Q)\sum\nolimits_{\,i<j}\eta^i\otimes\eta^j .
\end{equation}
We express \eqref{Eq-1.1}, using \eqref{Eq-3.3}, as
\begin{align}\label{Eq-3.5}
\nonumber
 (1/2)\,\pounds_V\,g + E & = \big(\lambda -\frac{r-s\tr Q}{2\,n}\big)\,g
 +\big(\nu +\frac{r-(2\,n+s)\tr Q}{2\,n}\big)\sum\nolimits_{\,i}\eta^i\otimes\eta^i\\
 & +(\lambda+\nu-\tr Q)\sum\nolimits_{\,i<j}\eta^i\otimes\eta^j.
\end{align}
Taking covariant derivative of \eqref{Eq-3.3} along $X$ and using \eqref{E-30} and $\tr Q=const$, gives
\begin{align}\label{Eq-3.6}
\nonumber
 & (\nabla_X\,E)(Y,Z) = (\nabla_X\Ric)(Y,Z) -\frac{X(r)}{2\,n}\,\big(g(Y,Z) -\sum\nolimits_{\,i}\eta^i(Y)\,\eta^i(Z)\big)\\
 & +\Big((s-1)\tr Q +\frac{(2\,n+s)\tr Q - r}{2\,n}\Big)\,\big(\bar\eta(Z)\,g(f X, Y) + \bar\eta(Y)\,g(f X, Z) \big).
\end{align}
Taking the trace of \eqref{Eq-3.6} over $X$ and $Z$, and since
$r=const$, $\Div_g\Ric = \frac12\,d\,r$ (twice-contracted 2nd Bianchi identity, \cite[p.~135]{KN-69}),
and $\tr f=0$, we get the equality $\Div_g E=0$:
\[
 (\Div_g E)(Y) = (\Div_g\Ric)(Y)
 +\frac{s\,(2\,n+1)\tr Q - r}{2\,n}\,\big(\bar\eta(e_i)\,g(f e_i, Y) + \bar\eta(Y)\,g(f e_i, e_i) \big) = 0.
\]
Since the tensor $E$ is traceless, we get $E^0=E$ and $V(\tr_{\!g} E)=0$.
Thus, applying \eqref{Eq-3.1}, we find
 $\int_{\,M} g(E, \pounds_V\,g)\,{\rm d}\vol =0$.
Applying \eqref{Eq-3.5} and \eqref{E-R1b} in the above integral formula,
and using equalities $g(E,g)= \tr_{\!g} E=0$ and $g(E, \eta^i\otimes\eta^j)= E(\xi_i, \xi_j)=0$,
we obtain
\begin{equation}\label{Eq-3.7}
 \int_{\,M} g(E, E)\,{\rm d}\vol =0.
\end{equation}
From \eqref{Eq-3.7} we get $E=0$.
Thus, our manifold is $\eta$-Einstein, see \eqref{Eq-2.10}, where $\alpha,\beta$ satisfy~\eqref{Eq-3.2}.
\end{proof}

The following consequence of Theorem~\ref{T-7.1} generalizes \cite[Theorem~3.1(i)]{G-2023}.

\begin{corollary}
Let a compact weak {\rm K}-contact manifold $M^{2n+1}(f, \xi, \eta, Q, g)$ satisfy $r=const$ and $\tr Q=const$.
If $(g,V,\lambda,\nu)$ represents a generalized $\eta$-Ricci soliton on $M$,
then $M$ is $\eta$-Einstein.
\end{corollary}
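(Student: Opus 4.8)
The plan is to manufacture a traceless symmetric $(0,2)$-tensor $E$ whose vanishing is equivalent to the $\eta$-Einstein condition, to prove that $E$ is divergence free, and then to squeeze $\int_M g(E,E)\,{\rm d}\vol=0$ out of the Gover--Orsted identity \eqref{Eq-3.1}. First I would pin down the scalars $\alpha,\beta$ that the candidate equation \eqref{Eq-2.10} forces: taking the $g$-trace gives $r=(2n+s)\alpha+s\beta$, while evaluating \eqref{Eq-2.10} on $(\xi_i,\xi_i)$ and using $\Ric(\xi_i,\xi_i)=\tr Q$ from \eqref{E-R1b} gives $\alpha+\beta=\tr Q$; solving the two relations yields $\alpha=\frac{r-s\tr Q}{2n}$ and $\beta=\frac{(2n+s)\tr Q-r}{2n}$. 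With these values I would set $E:=\Ric-\alpha\,g-\beta\sum_i\eta^i\otimes\eta^i-(\alpha+\beta)\sum_{i<j}\eta^i\otimes\eta^j$, which is traceless by construction and, thanks to $\Ric(\xi_i,\xi_j)=\tr Q=\alpha+\beta$, also satisfies $E(\xi_i,\xi_j)=0$ for all $i,j$. The desired conclusion is then exactly $E=0$.

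The crux is to establish $\Div_g E=0$. Here I would differentiate $E$ covariantly, using $\nabla\xi_i=-f$ from \eqref{E-30} (so that $(\nabla_X\eta^i)(Y)=-g(fX,Y)$) together with the standing hypotheses $r=\mathrm{const}$ and $\tr Q=\mathrm{const}$. After contracting, the term $\Div_g\Ric$ is disposed of by the twice-contracted second Bianchi identity $\Div_g\Ric=\tfrac12\,dr=0$, while the contributions from $\nabla\!\big(\sum_i\eta^i\otimes\eta^i\big)$ and $\nabla\!\big(\sum_{i<j}\eta^i\otimes\eta^j\big)$ are assembled from $g(fX,\cdot)$ and the factor $\tr f=0$ (valid since $f$ is skew-symmetric). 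The delicate bookkeeping is to check that the coefficients multiplying the surviving $f$-terms cancel after the contraction; this is the technical heart of the argument, and I expect the cross-terms $\sum_{i<j}\eta^i\otimes\eta^j$, carried with a coefficient proportional to $\tr Q$, to be the part most prone to sign and index-counting errors.

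Once $\Div_g E=0$ is in hand, the endgame is short. Since $E$ is traceless, $E^0=E$ and $V(\tr_g E)=0$, so the identity \eqref{Eq-3.1} collapses to $\int_M g(E,\pounds_V g)\,{\rm d}\vol=0$. Finally I would substitute, from the $\eta$-Ricci soliton equation \eqref{Eq-1.1}, the expression for $\tfrac12\,\pounds_V g$ as $-E$ plus a linear combination of $g$, $\sum_i\eta^i\otimes\eta^i$ and $\sum_{i<j}\eta^i\otimes\eta^j$; because $g(E,g)=\tr_g E=0$ and $g(E,\eta^i\otimes\eta^j)=E(\xi_i,\xi_j)=0$ for all $i,j$, every right-hand term is annihilated against $E$ and only the $-g(E,E)$ term survives, giving $\int_M g(E,E)\,{\rm d}\vol=0$ and hence $E\equiv0$. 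This is precisely the $\eta$-Einstein equation \eqref{Eq-2.10} with $\alpha,\beta$ as computed above, which finishes the proof. Note that compactness and the absence of boundary enter only to license the integration by parts encoded in \eqref{Eq-3.1}.
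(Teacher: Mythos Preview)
Your proposal is correct and follows essentially the same approach as the paper: the corollary is the $s=1$ case of Theorem~\ref{T-7.1}, and the paper simply cites that theorem without a separate argument, while you have reproduced the proof of Theorem~\ref{T-7.1} itself (constructing the traceless tensor $E$, checking $\Div_g E=0$ via \eqref{E-30}, $\tr f=0$ and the contracted Bianchi identity, and then applying the Gover--Orsted identity \eqref{Eq-3.1} together with the soliton equation to force $\int_M g(E,E)\,{\rm d}\vol=0$). Note that for $s=1$ the cross-term $\sum_{i<j}\eta^i\otimes\eta^j$ is empty, so the bookkeeping you flagged as ``delicate'' in fact disappears in the setting of the corollary.
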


\section{Conclusion}

It is shown that a weak $f$-contact structure is a useful tool for studying unit Killing vector fields, totally geodesic foliations and mixed sectional curvature on Riemannian manifolds. Some results for $f$-contact and $f$-{K}-contact structures \cite{b1970,YK-1985} have been extended to certain weak structures. Conditions are found under which
(i)~a weak $f$-{K}-contact manifold with parallel Ricci tensor or with a generalized gradient Ricci soliton structure is a quasi Einstein or an Einstein manifold;
(ii)~a compact weak $f$-{K}-contact manifold with constant scalar curvature and an $\eta$-Ricci soliton structure is $\eta$-Einstein.
 Based on applications of the weak $f$-contact structure in contact geometry considered in the article,
we expect that this structure will also be fruitful in theoretical physics.

In conclusion, we pose several questions.
{Is the condition ``the $\xi$-sectional curvature is positive" sufficient for a weak $f$-contact manifold to be weak $f$-K-contact}?
{Does a weak $f$-contact manifold of dimension greater than $3$ have some positive $\xi$-sectional curvature}?
{Is a compact weak $f$-K-contact Einstein manifold an ${\cal S}$-manifold}?
{When a given weak $f$-K-contact manifold is a mapping torus (see \cite{Goertsches-2}) of a manifold of lower dimension}?
{When a weak $f$-contact manifold equipped with a Ricci-type soliton structure,
carries a canonical (for example, with constant sectional curvature or Einstein-type) metric}?
One could answer these questions by generalizing some deep results on $f$-contact manifolds for weak $f$-contact manifolds.


\end{document}